\def\eqref#1{equation~\ref{#1}}
\def\1{\bm{1}}
\def\ra{{\textnormal{a}}}
\DeclareMathAlphabet{\mathsfit}{\encodingdefault}{\sfdefault}{m}{sl}
\SetMathAlphabet{\mathsfit}{bold}{\encodingdefault}{\sfdefault}{bx}{n}
\DeclareMathOperator*{\argmin}{arg\,min}
\newcommand{\circledOne}{\text{\ding{172}}}
\newcommand{\circledTwo}{\text{\ding{173}}}
\newcommand{\circledThree}{\text{\ding{174}}}
\newcommand{\circledFour}{\text{\ding{175}}}
\newcommand{\circledFive}{\text{\ding{176}}}
\newcommand{\circledSix}{\text{\ding{177}}}
\newcommand{\circledSeven}{\text{\ding{178}}}
\newcommand{\circledEight}{\text{\ding{179}}}
\newcommand{\circledNine}{\text{\ding{180}}}
\newcommand{\circledTen}{\text{\ding{181}}}
\newcommand{\ubar}[1]{\underaccent{\bar}{#1}}
\newcommand{\norm}[1]{\left\| #1 \right\|}
\newcommand{\normeu}[1]{\left\| #1 \right\|_2}
\newcommand{\normstar}[1]{\left\| #1 \right\|_{\star}}
\newcommand{\inp}[2]{\left\langle#1,#2\right\rangle} 
\newcommand{\cD}{\mathcal{D}}
\newcommand{\cL}{\mathcal{L}}
\newcommand{\cO}{\mathcal{O}}
\newcommand{\cN}{\mathcal{N}}
\newcommand{\cU}{\mathcal{U}}
\newcommand{\del}[1]{}
\let\la=\langle
\let\ra=\rangle
\definecolor{junglegreen}{rgb}{0.16, 0.67, 0.53}
\definecolor{lasallegreen}{rgb}{0.03, 0.47, 0.19}
\newcommand{\R}{\mathbb{R}} 
\newcommand{\eqdef}{:=} 
\newcommand{\Exp}[1]{{\rm E}\left[#1\right]}
\newcommand{\ExpSub}[2]{{\rm E}_{#1}\left[#2\right]}
\theoremstyle{definition}
\newtheorem{assumption}{Assumption}
\newtheorem{lemma}{Lemma}
\newtheorem{theorem}{Theorem}
\definecolor{red}{rgb}{1.0, 0.01, 0.24}
\definecolor{orange}{rgb}{1.0, 0.43, 0.29}
\title{\textbf{Better LMO-based Momentum Methods with Second-Order Information}}
\author{
Sarit Khirirat\thanks{ Correspondence to: \texttt{sarit.khirirat@kaust.edu.sa}} \qquad Abdurakhmon Sadiev \qquad Yury Demidovich \\
 \vspace {0.5cm}  Peter Richt{\'a}rik \\
\phantom{x}
    \\
    King Abdullah University of Science and Technology (KAUST) \\
    Thuwal, Saudi Arabia
}
\date{}
\begin{document}

\maketitle

\begin{abstract}
The use of momentum in stochastic optimization algorithms has shown empirical success across a range of machine learning tasks. 
Recently, a new class of stochastic momentum algorithms has emerged within the Linear Minimization Oracle (LMO) framework--leading to state-of-the-art methods, such as Muon, Scion, and Gluon, that effectively solve deep neural network training problems.
However, traditional stochastic momentum methods  offer convergence guarantees no better than the $\mathcal{O}(1/K^{1/4})$ rate. 
While several approaches--such as Hessian-Corrected Momentum (HCM)--have aimed to improve this rate, their theoretical results are generally restricted to the Euclidean norm setting. 
This limitation hinders their applicability in problems,  where arbitrary norms are often required.
In this paper, we extend the LMO-based framework by integrating HCM, and provide convergence guarantees under relaxed smoothness and arbitrary norm settings. 
We establish improved convergence rates of $\mathcal{O}(1/K^{1/3})$ for HCM, which can adapt to the geometry of the problem and achieve a faster rate than traditional momentum. 
Experimental results on training Multi-Layer Perceptrons (MLPs) and Long Short-Term Memory (LSTM) networks verify our theoretical observations.
\end{abstract}

\section{Introduction}

Stochastic momentum methods have been widely used for  solving data-intensive machine learning problems.
They are employed as the default Stochastic Gradient Descent (SGD) optimizer in open-sourced software libraries for deep neural network training, such as Pytorch~\citep{paszke2019pytorch}.
Originating from Polyak's heavy-ball method~\citep{polyak1964some}, stochastic momentum methods have been analyzed extensively in the literature~\citep{yan2018unified,yu2019linear,liu2020improved,cutkosky2020momentum,hubler2024gradient}.

Recent studies have extended the convergence analysis of stochastic momentum methods to arbitrary norm settings by employing the Linear Minimization Oracle (LMO) framework~\citep{pethick2025training,kovalev2025understanding,riabinin2025gluon}.
This framework encompasses many state-of-the-art optimization algorithms specifically used in deep neural network training, such as Muon~\citep{jordan2024muon} and Scion~\citep{pethick2025training}.
However, existing analyses of LMO-based momentum algorithms typically rely on standard smoothness assumptions, which limits their applicability to a wider range of machine learning problems.

Many modern learning problems, including distributionally robust optimization~\citep{chen2023generalized} and deep neural network training, often violate the standard $L$-smoothness assumptions. 
Empirical studies have shown that the Hessian norm  is not uniformly upper-bounded, as required by $L$-smoothness, for neural network models, including LSTM by~\citet{zhang2019gradient}, ResNet20 by ~\citet{zhang2019gradient}, and transformers by~\citet{crawshaw2022robustness}.  
These observations have motivated the development of relaxed smoothness conditions that more accurately capture the behavior of modern learning problems.
Key examples include the $(L_0,L_1)$-smoothness~\citep{zhang2019gradient,gorbunov2024methods,chen2023generalized,vankov2024optimizing}, the $(L_0,L_1,\alpha)$-smoothness~\citep{chen2023generalized}, and  the $\ell$-smoothness~\citep{li2023convex}.

Gluon, proposed by \citet{riabinin2025gluon}, is a layer-wise implementation of LMO-based algorithms using momentum, including Muon and Scion, for minimizing relaxed smooth functions.
However, its convergence under arbitrary norm settings--analogous to stochastic momentum methods under Euclidean norm settings--does not surpass the $\mathcal{O}(1/K^{1/4})$ rate in terms of the expected gradient norm.
Thus, the current analysis does not provide theoretical advantage from using momentum.

To improve the convergence of stochastic algorithms using momentum under the Euclidean setting, many works have focused on modifying the gradient estimators used in the momentum updates. 
\citet{arnold2019reducing} introduced implicit gradient transportation (IGT) as a novel gradient estimator, and subsequently, \citet{cutkosky2020momentum} established the $\cO(1/K^{2/7})$ convergence for IGT-based momentum methods. 
This convergence was further improved by recent variants, such as  STORM~\citep{cutkosky2019momentum} and MARS~\citep{yuan2024mars}, which employ stochastic gradient differences as gradient estimators. 
Other variants by~\citet{zhang2020one,salehkaleybar2022momentum,tran2022better}
leverage second-order information to construct more accurate estimators.  
Both variants ensure the improved rate of $\cO(1/K^{1/3})$, which is proven to be optimal for minimizing smooth nonconvex functions under mild  conditions~\citep{arjevani2023lower}. 
However, these momentum methods have been studied only under the standard smoothness and Euclidean norm setting.

\section{Contributions}

In this paper, we aim to generalize two variants of second-order momentum methods~\citep{salehkaleybar2022momentum,tran2022better} for minimizing nonconvex functions under relaxed smoothness assumptions and under the arbitrary norm setting.
We make the following contributions:

First, we incorporate two second-order momentum  variants into the class of LMO-based optimization algorithms~\citep{pethick2025training,kovalev2025understanding}. Our methods generalize the second-order momentum techniques originally analyzed under the Euclidean setting by~\citet{salehkaleybar2022momentum,tran2022better} to broader geometries and norm choices associated with the  problems.

Second, we establish an $\cO(1/K^{1/3})$ convergence  in the expected gradient norm for LMO-based methods using two second-order momentum variants under relaxed smoothness assumptions in the arbitrary norm setting. This improves upon the previously known $\cO(1/K^{1/4})$ rate for LMO-based methods using Polyak momentum by~\citet{pethick2025training,pethick2025generalized,riabinin2025gluon,kovalev2025understanding}. Also, our results match the known rates for second-order momentum methods under the standard $L$-smoothness and Euclidean setting~\citep{salehkaleybar2022momentum,tran2022better,sadiev2025second}. A comparison of theoretical guarantees is summarized in~\Cref{tab:TheoreticalComparisons}.

Third, we evaluate LMO-based methods with second-order momentum and other momentum updates on solving three nonconvex problems: logistic regression problems, Multi-Layer Perceptron (MLP) training problems, and Long Short-Term Memory (LSTM) network training problems. Our results validate our theory, illustrating that LMO-based methods using second-order momentum outperform those using Polyak momentum and IGT momentum.

\definecolor{LightCyan}{rgb}{0.9,0.9,0.9}
\begin{table}[h!]
\begin{center}
\renewcommand{\arraystretch}{1.4}
\resizebox{\textwidth}{!}{%
\begin{tabular}{cccccc}
\bf Methods & \bf Rate & \bf GS & \bf HS & \bf MSS  & \bf Norm Type \\ \hline\hline
\begin{tabular}{c} Unconstrained SCG \\ \citet{pethick2025training} \end{tabular} & $\cO\left( \nicefrac{1}{K^{1/4}} \right)$ & $L$ & -- & -- & Arbitrary   \\ 
\begin{tabular}{c} GGNC \\ \citet{pethick2025generalized} \end{tabular} & $\cO\left( \nicefrac{1}{K^{1/4}} \right)$ & $(L_0,L_1)$ & -- & -- & Arbitrary   \\ 
\begin{tabular}{c} Gluon \\ \citet{riabinin2025gluon} \end{tabular} & $\cO\left( \nicefrac{1}{K^{1/4}} \right)$ & $(L_0,L_1)$ & -- & -- & Arbitrary   \\ 
\begin{tabular}{c} Extrapolated Momentum \\ \citet{kovalev2025understanding} \end{tabular} &  $\cO\left( \nicefrac{1}{K^{2/7}} \right)$  & $L$ & $M$& -- & Arbitrary  \\
\begin{tabular}{c} Momentum Variance Reduction  \\ \citet{kovalev2025non} \end{tabular} &  $\cO\left( \nicefrac{1}{K^{1/3}} \right)$  & $L$ & --& $\cL$  & Arbitrary  \\
\begin{tabular}{c} LiMuon\\ \citet{huang2025limuon} \end{tabular} &  $\cO\left( \nicefrac{1}{K^{1/3}} \right)$  & $(L_0,L_1)$ & --& $(\cL_0,\cL_1)$ & Euclidean  \\
\begin{tabular}{c} Second-Order Momentum (Variant 1)  \\ \citet{salehkaleybar2022momentum} \end{tabular} &  $\cO\left( \nicefrac{1}{K^{1/3}} \right)$  & $L$ & --& --  & Euclidean  \\
\begin{tabular}{c} Second-Order Momentum (Variant 2)  \\ \citet{tran2022better} \end{tabular} &  $\cO\left( \nicefrac{1}{K^{1/3}} \right)$  & $L$ & $M$& --  & Euclidean  \\ \hline
 \rowcolor{LightCyan} \begin{tabular}{c} Extrapolated Momentum  \\ {\bf NEW} (\Cref{thm:IGT_relaxed_smoothness}) \end{tabular} & $\cO\left( \nicefrac{1}{K^{2/7}} \right)$ & $(L_0,L_1)$ & $(M_0,M_1)$ & -- & Arbitrary   \\
 \rowcolor{LightCyan} \begin{tabular}{c} Momentum Variance Reduction  \\ {\bf NEW} (\Cref{thm:MVR_relaxed_smoothness}) \end{tabular} & $\cO\left( \nicefrac{1}{K^{1/3}} \right)$ & $(L_0,L_1)$ & -- & $(\cL_0, \cL_1)$ & Arbitrary   \\  \hline 
 \rowcolor{LightCyan} \begin{tabular}{c} Second-Order Momentum (Variant 1)  \\  {\bf NEW} (\Cref{lemma:V1_relaxed_smoothness}) \end{tabular}
&  $\cO\left( \nicefrac{1}{K^{1/3}} \right)$ & $(L_0,L_1)$ & --& --  & Arbitrary   \\
 \rowcolor{LightCyan} \begin{tabular}{c} Second-Order Momentum (Variant 2)  \\ {\bf NEW} (\Cref{thm:V2_relaxed_smoothness}) \end{tabular} & $\cO\left( \nicefrac{1}{K^{1/3}} \right)$ & $(L_0,L_1)$ & $(M_0,M_1)$ & -- & Arbitrary   \\
 \hline \hline
\end{tabular}
}
\end{center}
\caption{Comparisons of convergence for
LMO-based methods that utilize Polyak momentum like Gluon, and second-order momentum methods, and extrapolated momentum (implicit gradient transportation), and momentum variance reduction. We use the following abbreviations: {\bf GS} = Gradient Smoothness, {\bf HS} = Hessian Smoothness, and {\bf MSS} = Mean-Squared Smoothness.
}
\label{tab:TheoreticalComparisons}
\end{table}

\section{Related Works}

In this section, we review the existing literature on stochastic momentum methods, Muon, and relaxed smoothness, all of which are closely related to our work.

\textbf{Stochastic momentum methods.}
Momentum is widely used  to expedite the training process in stochastic optimization methods.
In the Euclidean norm setting, stochastic momentum methods enjoy the $\cO(1/K^{1/4})$ convergence under standard smoothness~\citep{yan2018unified,yu2019linear,liu2020improved,cutkosky2020momentum}, and recently under relaxed smoothness~\citep{hubler2024gradient}. 
Moreover, momentum has been incorporated to enhance the performance of other methods, such as distributed methods with communication compression for communication-efficient learning~\citep{fatkhullin2023momentum,khirirat2024error}, and LMO-based optimization methods for deep neural network training~\citep{jordan2024muon,pethick2025training}.
To improve upon the $\cO(1/K^{1/4})$ convergence rate, several novel momentum updates have been proposed. 
For instance, stochastic methods employing IGT momentum (or extrapolated momentum)~\citep{arnold2019reducing} have been shown by~\citet{cutkosky2020momentum} to achieve an improved rate of $\cO(1/K^{2/7})$. Furthermore, the methods based on STORM~\citep{cutkosky2019momentum}, MARS~\citep{yuan2024mars}, and Hessian-corrected momentum~\citep{salehkaleybar2022momentum,tran2022better,zhang2020improved} attain the convergence rate of $\cO(1/K^{1/3})$, which matches the known lower bound established by~\citet{arjevani2023lower}.
However, the convergence guarantees of these novel stochastic momentum methods were restricted to the Euclidean norm setting. 
In this paper, we will incorporate Hessian-corrected momentum  into the LMO-based optimization methods to guarantee the $\cO(1/K^{1/3})$ convergence under the arbitrary norm setting, which improves upon the $\cO(1/K^{1/4})$ convergence achieved by LMO-based methods using Polyak momentum.

\textbf{Muon.}
Muon optimizers~\citep{jordan2024muon} update the model parameters (with their inherent matrix structure) with orthogonalized gradient momentum.
Empirical studies, e.g. by~\citet{jordan2024muon,liu2025muon}, highlight the superior performance of Muon  over Shampoo~\citep{gupta2018shampoo}, SOAP~\citep{vyas2024soap}, and AdamW~\citep{loshchilov2017decoupled}, for language model training.
From these encouraging observations, the convergence of Muon  was  analyzed initially by~\citet{li2025note} under the standard smoothness with respect to the Frobenius norm. 
Later, LMO-based optimization methods with momentum were proposed by~\citet{pethick2025training} to capture optimizers, including Muon, Scion, and stochastic Euclidean  momentum methods with momentum.  
Under the arbitrary norm setting, the methods were shown to enjoy the $\cO(1/K^{1/4})$ convergence under the standard smoothness~\citep{pethick2025training,kovalev2025understanding}, and under the relaxed smoothness~\citep{pethick2025generalized,riabinin2025gluon}.

\textbf{Relaxed smoothness.}
Training deep neural networks, especially large-scale language models, poses significant challenges due to their highly non-convex functions. To better capture this, a variety of relaxed smoothness conditions on the functions have been proposed. One early contribution by~\citet{zhang2019gradient} introduced a function class in which the Hessian norm grows linearly with the gradient norm, providing a more flexible alternative to classical smoothness assumptions. Building upon this, several works have introduced broader classes of functions, such as the symmetric $(L_0, L_1, \alpha)$-smoothness condition~\citep{chen2023generalized}, the $\ell$-smoothness condition~\citep{li2023convex}, and the $(\rho, K_0, K_\rho)$-smoothness condition~\citep{liu2025theoretical}. These conditions aim to more accurately reflect the optimization landscape encountered in deep learning. Under such relaxed assumptions, the convergence behavior of various gradient-based methods has been extensively studied~\citep{crawshaw2022robustness, hubler2024gradient, zhang2019gradient, koloskova2023revisiting, chezhegov2025convergence, gorbunov2024methods, vankov2024optimizing, khirirat2024error, faw2023beyond,wang2023convergence,zhao2021convergence,hubler2024parameter}, providing theoretical insights that align more closely with empirical observations in neural network  training.

\section{Problem Formulation}
Throughout this paper, we consider the following unconstrained stochastic optimization problems
\begin{eqnarray}\label{eqn:problem}
    \underset{x\in\R^d}{\min} \ f(x) := \ExpSub{\xi \sim \cD}{f_{\xi}(x)},
\end{eqnarray}
where $f_\xi(x)$ is a differentiable but possibly nonconvex function,  $x\in\R^d$ is the $d$-dimensional vector of model parameters, and $\xi$ is a random vector, which represents a training data sample drawn from an unknown data distribution $\cD$.

Often, we  access a stochastic oracle to compute the stochastic gradient and Hessian of $f$ at $x$ with respect to $\xi\sim \cD$ denoted by $\nabla f_\xi(x)$ and $\nabla^2 f_\xi(x)$, respectively. 
We  assume that the stochastic gradient and stochastic Hessian satisfy the following unbiased and variance-bounded properties: 
\begin{assumption}\label{assum:Hessian_boundedvariance_v2}
 $\nabla f_\xi(x)$ and $\nabla^2 f_{\xi}(x)$ is an unbiased and variance-bounded estimator of $\nabla f(x)$ and $\nabla^2 f(x)$, respectively, i.e. for  all $x,w\in\R^d$,
\begin{align*}
& \ExpSub{\xi}{\nabla f_\xi(x)}= \nabla f(x), \quad  \ExpSub{\xi}{\normeu{\nabla f_\xi(x)-\nabla f(x)}^2} \leq \sigma^2_g, \quad  \\  &
\ExpSub{\xi}{\nabla^2 f_\xi(x)}= \nabla^2 f(x), \quad \text{and} \quad \ExpSub{\xi}{\normeu{(\nabla^2 f_\xi(x)-\nabla^2 f(x))w}^2} \leq \sigma^2_H \normeu{w}^2. 
\end{align*}
\end{assumption}

\Cref{assum:Hessian_boundedvariance_v2} is commonly used for analyzing the convergence of optimization algorithms using stochastic gradients~\citep{ghadimi2013stochastic,cutkosky2019momentum,gorbunov2020linearly} and stochastic Hessians~\citep{tran2022better,jiang2024stochastic}.

We further impose the following standard assumptions on objective functions.

\begin{assumption}\label{assum:lower_bound}
The function $f:\R^d\rightarrow \R$ is bounded from below, i.e., $f^{\inf}=\inf_{x\in\R^d} f(x) > -\infty$.    
\end{assumption}

\begin{assumption}\label{assum:Relaxed_Lipschitz_Grad}
The function $f:\R^d\rightarrow\R$ is differentiable, and its gradient $\nabla f(x)$ is symmetrically $(L_0,L_1)$-Lipschitz continuous with respect to the norm $\norm{\cdot}$, i.e. for all $x,y\in\R^d$, 
\begin{eqnarray*}
    \normstar{\nabla f(x) - \nabla f(y)} \leq (L_0 + L_1 \underset{\theta \in [0,1]}{\sup} \normstar{\nabla f(\theta x + (1-\theta)y)}) \norm{x-y}.
\end{eqnarray*}
\end{assumption}

\begin{assumption}\label{assum:Relaxed_Lipschitz_Hessian}
The function $f:\R^d\rightarrow \R$ is twice differentiable, and its Hessian $\nabla^2 f(x)$ is symmetrically $(M_0,M_1)$-Lipschitz continuous with respect to the norm $\norm{\cdot}$, i.e. for all $x,y\in\R^d$, 
\begin{eqnarray*}
    \normstar{\nabla^2 f(x) - \nabla^2 f(y)} \leq (M_0 + M_1 \underset{\theta \in [0,1]}{\sup} \normstar{ \nabla f(\theta x + (1-\theta)y)}) \norm{x-y}.
\end{eqnarray*}
\end{assumption}

Assumptions~\ref{assum:lower_bound},~\ref{assum:Relaxed_Lipschitz_Grad}, and~\ref{assum:Relaxed_Lipschitz_Hessian} ensure the lower-bound of the objective function $f$, the symmetric relaxed Lipschitz continuity of its gradient $\nabla f$, and the symmetric relaxed Lipschitz continuity of its Hessian $\nabla^2 f$, respectively.
On the one hand, \Cref{assum:Relaxed_Lipschitz_Grad} is a generalization of the symmetric $(L_0,L_1)$-Lipschitz continuity of $\nabla f(\cdot)$ with respect to the Euclidean norm by~\citet{gorbunov2024methods,chen2023generalized}. 
Also, \Cref{assum:Relaxed_Lipschitz_Grad}  recovers $L$-Lipschitz continuity with respect to the arbitrary norm by~\citet{kovalev2025understanding,pethick2025training} when we let $L_0=L$ and $L_1=0$.
On the other hand, \Cref{assum:Relaxed_Lipschitz_Hessian} is a generalization of  the asymmetric $(M_0,M_1)$-Lipschitz continuity of the Hessian with respect to the Euclidean norm by~\citet[Assumption 3]{xie2024trust}. Also, \Cref{assum:Relaxed_Lipschitz_Hessian}  obtains the $L_H$-Lipschitz continuity of the Hessian by~\citet{carmon2018accelerated,nesterov2006cubic}, when we let $M_0=L_H$ and $M_1 = 0$.

Finally, note that the variance (\Cref{assum:Hessian_boundedvariance_v2}) is measured with respect to the Euclidean norm, while the Lipschitz continuity of the gradient (\Cref{assum:Relaxed_Lipschitz_Grad})  and Hessian (\Cref{assum:Relaxed_Lipschitz_Hessian}) with respect to the arbitrary norm and its dual norm. 
Hence, we describe a connection between these norms using the following inequality: 
\begin{eqnarray}\label{eqn:norm_diff}
{\color{red}\ubar{\rho}}\normeu{x} \leq \normstar{x} \leq {\color{red}\bar\rho} \normeu{x}, \quad \text{and} \quad {\color{blue}\ubar{\theta}} \normeu{x} \leq  \norm{x} \leq {\color{blue}\bar \theta} \normeu{x}.
\end{eqnarray}

\subsection{LMO-based  Methods with Momentum}\label{sec:LMO}

To solve the problem in~\eqref{eqn:problem} for neural network training tasks, we focus on the LMO-based optimization methods using momentum~\citep{pethick2025training,kovalev2025understanding}, which update the iterates $\{x_k\}_{k \geq 0}$ as follows: Given $x_0,m_0 \in\R^d$, 
\begin{eqnarray}\label{eqn:LMO_momentum}
    x_{k+1} = x_k + \text{lmo}(m_k), \quad \text{and} \quad m_{k+1} = (1-\alpha_k)m_k + \alpha_k \nabla f_{\xi_{k+1}}(x_{k+1}).
\end{eqnarray}
Here, $\eta_k>0$ is a stepsize, $\alpha_k \in (0,1)$ is a momentum parameter,  $\nabla f_{\xi}(x)$ is the stochastic gradient, and $\text{lmo}(m_k) \eqdef {\text{argmin}}_{ \norm{x} \leq \eta_k } \inp{m_k}{x}$. 
The methods in~\eqref{eqn:LMO_momentum} encompass a class of stochastic momentum methods under specific norm choices. 
For instance, they recover normalized stochastic momentum methods~\citep{cutkosky2020momentum} when we let $\norm{\cdot} = \normeu{\cdot}$, and    sign stochastic momentum methods~\citep{jiang2025improved} when we let $\norm{\cdot} = \norm{\cdot}_{\infty}$.

The LMO-based methods using momentum in~\eqref{eqn:LMO_momentum} were  shown by~\citet{pethick2025training,kovalev2025understanding,riabinin2025gluon} to attain the $\cO\left( 1/K^{1/4} \right)$ convergence rate in the gradient norm  under the arbitrary norm setting, The obtained rate is the same as that of stochastic momentum methods under the Euclidean norm setting, e.g. by~\citet{cutkosky2020momentum,jiang2025improved,zhang2020improved,hubler2024parameter}.

To further improve the iteration complexity of the momentum methods in~\eqref{eqn:LMO_momentum}, we may use three momentum variants: extrapolated momentum, Stochastic Recursive Momentum (STORM), and second-order momentum.

First, extrapolated momentum~\citep{arnold2019reducing,cutkosky2020momentum}   updates the momentum vector $m_k$ according to: 
\begin{eqnarray}\label{eqn:extrapolated_m}
   m_{k+1} = (1-\alpha_k)m_k + \alpha_k \nabla f_{\xi_{k+1}} \left( y_{k+1} \right),
\end{eqnarray}
where $y_{k+1} =  x_{k+1} + (\nicefrac{(1-\alpha_k)}{\alpha_k} )(x_{k+1} - x_{k})$. 
LMO-based methods using extrapolated momentum enjoys the $\cO(1/K^{2/7})$ convergence rate under the standard smoothness and arbitrary norm setting~\citet{kovalev2025understanding}, which matches the rate under the Euclidean norm setting by~\citet[Theorem 3]{cutkosky2020momentum}, ~\citet[Theorem 5]{cutkosky2021high} in the bounded variance case.

To further improve the convergence of stochastic methods using extrapolated momentum, we may consider STORM~\citep{cutkosky2019momentum}. The key technique  is to use the stochastic gradient difference $\nabla f_{\xi_{k+1}}(x_{k+1}) - \nabla f_{\xi_{k+1}}(x_k)$ as the correction term to expedite the training process. In STORM, the momentum vector $m_k$ is updated via: 
\begin{eqnarray}
m_{k+1} = (1-\alpha_k)(m_k + [\nabla f_{\xi_{k+1}}(x_{k+1}) - \nabla f_{\xi_{k+1}}(x_k)]) + \alpha_k \nabla f_{\xi_{k+1}} \left( x_k \right).    
\end{eqnarray}
LMO-based methods using STORM have been recently shown to achieve the $\cO(1/K^{1/3})$ convergence rate under the relaxed smoothness and Frobenius norm setting recently by~\citet{huang2025limuon}, which recovers the rate under the standard smoothness and Euclidean norm setting by~\citet{cutkosky2019momentum}.
To better control the gradient correction term in STORM,~\citet{yuan2024mars} introduce a scaling parameter $\beta_k \in (0,1)$. 
The resulting algorithm, named MARS, modifies STORM by adjusting the update rule for the momentum vector $m_k$ as follows: 
\begin{eqnarray}
\label{eq:MARS}
m_{k+1} = (1-\alpha_k) \left(m_k + {\color{orange} \frac{\beta_k}{1-\alpha_k}}[\nabla f_{\xi_{k+1}}(x_{k+1}) - \nabla f_{\xi_{k+1}}(x_k)] \right) + \alpha_k \nabla f_{\xi_{k+1}} \left( x_k \right).   
\end{eqnarray}
By tuning the scaling factor $\beta_k$ properly, MARS can provide the finer control over the gradient difference term, thus leading to improved convergence performance. 
Note that MARS with $\beta_k =1-\alpha_k$ becomes STORM.

Finally, other variants that achieve the same  rate as STORM are second-order momentum~\citep{salehkaleybar2022momentum,tran2022better,zhang2020one} (or Hessian-corrected momentum). 
They utilize the Hessian-vector product as the correction term, and perform  the following update: 
\begin{eqnarray}\label{eqn:second_order_momentum_update}
    m_{k+1} = (1-\alpha_k)(m_k + \nabla^2 f_{\xi_{k+1}}(\hat x_{k+1})(x_{k+1}-x_k)) + \alpha_k \nabla f_{\xi_{k+1}}(x_{k+1}).
\end{eqnarray}
The second-order momentum update in~\eqref{eqn:second_order_momentum_update} becomes the one proposed by~\citet{tran2022better} when $\hat x_{k+1} = x_{k+1}$, and the one by~\citet{salehkaleybar2022momentum} when $\hat x_{k+1} = b_k x_{k+1} + (1-b_k)x_k$ with  $b_k \in \R$  generated by the uniform distribution $\cU(0,1)$.
Both STORM and second-order momentum  ensures the $\cO(1/K^{1/3})$ convergence, which is faster than the $\cO(1/K^{2/7})$ convergence of extrapolated momentum and the $\cO(1/K^{1/4})$ convergence of Polyak momentum.

\section{LMO-based Methods with Second-order Momentum}

Now, we consider LMO-based methods~\citep{pethick2025training,kovalev2025understanding} using  second-order momentum~\citep{salehkaleybar2022momentum,tran2022better,zhang2020one}.
The methods leverage the LMO-based oracle in~\eqref{eqn:LMO_momentum},  
second-order momentum updates in~\eqref{eqn:second_order_momentum_update}, and the scaling factor $\beta_k$ introduced in MARS~\citep{yuan2024mars}.
See \Cref{alg:2_order_momentum} for the detailed description of the methods. 

In~\Cref{alg:2_order_momentum}, the second-order momentum  incorporates the Hessian-vector product term into its update to accelerate the convergence of the LMO-based momentum methods. 
The Hessian-vector product requires roughly the same computational time as the gradient~\citep{pearlmutter1994fast}.
This can be achieved by using the automatic differentiation package to compute $\nabla^2 f(x) v$ for $x,v\in\R^d$ by evaluating $\nabla h(x)$, where  $h(x) = \inp{\nabla f(x)}{v}$~\citep{tran2022better}.

\begin{algorithm}
\caption{LMO-based Optimization Methods with Second-order Momentum}
\label{alg:2_order_momentum}
\begin{algorithmic}[1]
\STATE \textbf{Input:} Tuning parameters $\eta_k>0$ and $\alpha_k , {\color{orange} \beta_k} \in (0,1)$ for $k=0,1,\ldots$; Initial points $x_0 , m_0 \in \R^d$; Total number of iterations $K>0$
\FOR{each iteration $k = 0, 1, \dots, K$}
        \STATE Update $x_{k+1} = x_k + \text{lmo}(m_k)$, where $\text{lmo}(m_k) \eqdef \underset{ \norm{x} \leq \eta_k }{\text{argmin}} \inp{m_k}{x}$, and sample $\xi_{k+1} \sim \cD$
        \IF{Variant 1}
        \STATE Set $\hat x_{k+1}=b_{k+1} x_{k+1} + (1-b_{k+1})x_k $, where $b_{k+1} \sim \cU(0,1)$
        \STATE Update $m_{k+1} =  (1-\alpha_k)\left(m_k + {\color{orange} \frac{\beta_k}{1-\alpha_k}}\nabla^2 f_{\xi_{k+1}}(  \hat x_{k+1}) (x_{k+1} - x_k) \right) + \alpha_k \nabla f_{\xi_{k+1}}\left(x_{k+1} \right)$
        \ELSIF{Variant 2}
        \STATE Update $m_{k+1} =  (1-\alpha_k)\left( m_k + {\color{orange} \frac{\beta_k}{1-\alpha_k}}\nabla^2 f_{\xi_{k+1}}( x_{k+1}) (x_{k+1} - x_k) \right) + \alpha_k \nabla f_{\xi_{k+1}}\left(x_{k+1} \right)$
        \ENDIF
\ENDFOR
\end{algorithmic}
\end{algorithm}

To this end, we provide the convergence theorems for~\Cref{alg:2_order_momentum} with $\beta_k=1-\alpha_k$.

\begin{theorem}[Variant 1 of~\Cref{alg:2_order_momentum}]\label{lemma:V1_relaxed_smoothness} Consider the problem of minimizing $f(x)=\ExpSub{\xi \sim \cD}{f_{\xi}(x)}$. Let $f$ be twice differentiable, and let   Assumption~\ref{assum:Hessian_boundedvariance_v2},~\ref{assum:lower_bound}, and~\ref{assum:Relaxed_Lipschitz_Grad} hold. 
Then, the iterates $\{x_k\}$ generated by Variant 1 of \Cref{alg:2_order_momentum} with $\beta_k=1-\alpha_k$ and with
\begin{eqnarray*}
  \alpha_k =  \alpha = \frac{1}{(K+1)^{ \nicefrac{2}{3}}}, \quad \text{and} \quad  \eta_k= \eta = \frac{\hat \eta}{(K+1)^{  \nicefrac{2}{3}}},
\end{eqnarray*}
where $\hat \eta =\frac{1}{80L_1} \left(\nicefrac{{\color{red}\bar\rho}}{{\color{red}\ubar\rho}}\right)^{-1}$ satisfy  
\begin{eqnarray*}
&& \min_{k \in \{0,1,\ldots,K\}} \Exp{\normstar{\nabla f(x_k)}}
\leq  \frac{2 (f(x_0) - f_{\inf})}{\hat \eta (K+1)^{\nicefrac{1}{3}}} +  \frac{4\left( \Exp{\normstar{e_0}} + {\color{red}\bar\rho}\sigma_g\right)}{(K+1)^{\nicefrac{1}{3}}} +  \frac{\hat \eta L_0 \exp(L_1 \hat\eta)}{(K+1)^{\nicefrac{2}{3}}}  \\
&&  \hspace{1.0cm} + 12\frac{{\color{red}\bar\rho}}{{\color{red} \ubar \rho}} \frac{  \hat\eta L_0 }{(K+1)^{ \nicefrac{1}{3} }}   \left(\exp( L_1 \hat\eta) + \sqrt{L_1 \hat \eta} \exp( L_1 \hat\eta) + 2\right)   + 8\frac{ {\color{red}\bar\rho}}{{\color{blue}\ubar\theta}} \frac{ \hat\eta \sigma_H}{(K+1)^{ \nicefrac{1}{3} }} .
\end{eqnarray*}
\end{theorem}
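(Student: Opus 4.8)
The plan is to follow the standard descent-plus-error-control template for momentum methods, adapted to the LMO/arbitrary-norm setting. Introduce the momentum error $e_k := m_k - \nabla f(x_k)$ and control two quantities along the trajectory: the objective decrease and the accumulated momentum error $\sum_k \alpha\,\Exp{\normstar{e_k}}$ (or $\sum_k\Exp{\normstar{e_k}}$ after dividing by $K+1$). The first ingredient is a descent lemma: since $x_{k+1}-x_k = \text{lmo}(m_k)$ has $\norm{x_{k+1}-x_k}=\eta$ and $\inp{m_k}{x_{k+1}-x_k} = -\eta\normstar{m_k}$, applying the $(L_0,L_1)$-smoothness (\Cref{assum:Relaxed_Lipschitz_Grad}) through its descent-inequality consequence gives something like $f(x_{k+1}) \le f(x_k) - \eta\normstar{\nabla f(x_k)} + 2\eta\normstar{e_k} + \tfrac12\eta^2 L_0\exp(L_1\eta\,\ldots)$, where the $\exp(L_1\hat\eta)$ factor comes from the standard trick of bounding $\sup_\theta\normstar{\nabla f(\cdot)}$ along the short segment by $\normstar{\nabla f(x_k)}$ times a geometric factor (this is why $\hat\eta\propto 1/L_1$ is needed). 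Summing over $k=0,\dots,K$, rearranging, and dividing by $\eta(K+1)$ yields
\begin{eqnarray*}
\min_k \Exp{\normstar{\nabla f(x_k)}} \le \frac{f(x_0)-f_{\inf}}{\eta(K+1)} + \frac{2}{K+1}\sum_{k=0}^{K}\Exp{\normstar{e_k}} + \tfrac12 \eta L_0\exp(L_1\hat\eta).
\end{eqnarray*}

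The heart of the proof is the recursion for $e_k$. Writing out $m_{k+1} = (1-\alpha)m_k + (1-\alpha)\nabla^2 f_{\xi_{k+1}}(\hat x_{k+1})(x_{k+1}-x_k) + \alpha\nabla f_{\xi_{k+1}}(x_{k+1})$ and subtracting $\nabla f(x_{k+1})$, one gets
\begin{eqnarray*}
e_{k+1} = (1-\alpha)e_k + (1-\alpha)\underbrace{\big(\nabla^2 f_{\xi_{k+1}}(\hat x_{k+1})(x_{k+1}-x_k) - (\nabla f(x_{k+1})-\nabla f(x_k))\big)}_{=:S_{k+1}} + \alpha\,\underbrace{\big(\nabla f_{\xi_{k+1}}(x_{k+1}) - \nabla f(x_{k+1})\big)}_{=:\zeta_{k+1}} + (1-\alpha)\underbrace{\big((\nabla^2 f_{\xi_{k+1}}-\nabla^2 f)(\hat x_{k+1})(x_{k+1}-x_k)\big)}_{=:\xi^H_{k+1}}.
\end{eqnarray*}
Here Variant 1's randomized midpoint $\hat x_{k+1}=b_{k+1}x_{k+1}+(1-b_{k+1})x_k$ with $b_{k+1}\sim\cU(0,1)$ is exactly what makes $\Exp{\nabla^2 f(\hat x_{k+1})(x_{k+1}-x_k)\mid \mathcal{F}_k, x_{k+1}} = \nabla f(x_{k+1})-\nabla f(x_k)$ by the fundamental theorem of calculus, so $\Exp{S_{k+1}\mid\ldots}=0$ — this is why Variant 1 needs only gradient smoothness and not Hessian smoothness, in contrast to Variant 2. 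The zero-mean terms $S_{k+1}$ and $\zeta_{k+1}$ are handled in expectation of the squared norm (converting to Euclidean norm via \eqref{eqn:norm_diff}, using \Cref{assum:Hessian_boundedvariance_v2} for $\Exp{\normeu{\zeta_{k+1}}^2}\le\sigma_g^2$ and $(L_0,L_1)$-smoothness to bound $\normeu{S_{k+1}}^2$ by $\normeu{x_{k+1}-x_k}^2$ times a squared-Lipschitz factor, i.e. $O(\eta^2)$), while $\xi^H_{k+1}$ has $\Exp{\normeu{\xi^H_{k+1}}^2}\le\sigma_H^2\normeu{x_{k+1}-x_k}^2 \le \sigma_H^2(\eta/\ubar\theta)^2$. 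Unrolling the recursion and using $\Exp{\normstar{e_{k+1}}}\le(1-\alpha)\Exp{\normstar{e_k}} + \alpha\,\bar\rho\sigma_g\,(\text{from the }\alpha\zeta\text{ term, crudely}) + O(\bar\rho\eta/\ubar\rho)(L_0\cdot(\ldots)) + O(\bar\rho\eta\sigma_H/\ubar\theta)$, or more carefully tracking second moments and using $\sqrt{\Exp{\normstar{e_k}^2}}$ as the potential, gives a geometric recursion whose fixed-point/steady-state size is $O(\bar\rho\sigma_g) + O((\bar\rho/\ubar\rho)(\eta/\alpha)L_0(\ldots)) + O((\bar\rho/\ubar\theta)(\eta/\alpha)\sigma_H)$; summing $\Exp{\normstar{e_k}}$ over $k$ and dividing by $K+1$ contributes the $\Exp{\normstar{e_0}}$ transient (scaled by $1/(\alpha(K+1))$) plus these steady-state pieces, now amplified by the geometric-series factor $\sum(1-\alpha)^j\sim 1/\alpha$.

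Plugging $\alpha=(K+1)^{-2/3}$ and $\eta=\hat\eta(K+1)^{-2/3}$ makes $\eta/\alpha = \hat\eta$, so the $e_k$-bound terms become $O(1)$-sized in $K$ but are divided by the $\eta(K+1)=\hat\eta(K+1)^{1/3}$ from the descent step — producing the $(K+1)^{-1/3}$ rate — while the genuinely $O(\eta^2)$ term $\tfrac12\eta L_0\exp(L_1\hat\eta)$ after dividing by one power of $\eta$ is already $O(\eta)=O((K+1)^{-2/3})$, matching the lone $(K+1)^{-2/3}$ term in the statement. Tracking constants through the unrolling with care to keep the $\exp(L_1\hat\eta)$ and $\sqrt{L_1\hat\eta}\exp(L_1\hat\eta)$ factors (the latter arising from cross-terms between the segment-length factor and the Lipschitz growth) reproduces the stated coefficients. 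The main obstacle I anticipate is precisely this constant-tracking through the coupled recursion in the non-Euclidean setting: one must repeatedly switch between $\normstar{\cdot}$ (where smoothness lives) and $\normeu{\cdot}$ (where the variance bounds live) via \eqref{eqn:norm_diff}, and the choice $\hat\eta = \tfrac{1}{80L_1}(\bar\rho/\ubar\rho)^{-1}$ must be verified to make the geometric recursion genuinely contractive (i.e. the $S_{k+1}$-induced term strictly smaller than $\alpha$), so the numeric constant $80$ and the norm-ratio dependence have to be carried consistently from the descent lemma's $\exp$-factor bound all the way through. A secondary subtlety is that the randomization over $b_{k+1}$ must be correctly conditioned — the unbiasedness of $S_{k+1}$ holds only after taking expectation over $b_{k+1}$ and $\xi_{k+1}$ jointly, given the past — which requires stating the filtration carefully so that $x_{k+1}$ is measurable but $\hat x_{k+1},\xi_{k+1}$ are not.
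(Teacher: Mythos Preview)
Your proposal is essentially the paper's proof: descent via the LMO step, full unrolling of $e_k$ exploiting the martingale structure granted by $b_{k+1}\sim\cU(0,1)$ together with the unbiased stochastic Hessian, second-moment control of the correction term (which picks up both the $\sigma_H^2$ contribution and an $L_1^2\normstar{\nabla f(x_k)}^2$ piece), and absorbing that gradient-dependent piece \emph{after} summing the descent inequality over $k$ via $\sum_k\sum_{t<k}(1-\alpha)^{k-t}\normstar{\nabla f(x_t)}\le\alpha^{-1}\sum_k\normstar{\nabla f(x_k)}$ --- this last absorption, not a contraction in the $e_k$ recursion itself, is where the constraint $\hat\eta=\tfrac{1}{80L_1}(\bar\rho/\ubar\rho)^{-1}$ enters. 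One slip to fix: your displayed recursion for $e_{k+1}$ double-counts, since $S_{k+1}$ as you define it (with the stochastic Hessian $\nabla^2 f_{\xi_{k+1}}$) already contains the Hessian noise, so the extra $(1-\alpha)\xi^H_{k+1}$ term should not appear; the paper keeps a single martingale-difference term $W_{k+1}:=S_{k+1}$ and lets the $\sigma_H^2$ piece surface inside the bound on $\Exp{\normeu{W_{k+1}}^2}$.
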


\begin{theorem}[Variant 2 of~\Cref{alg:2_order_momentum}]\label{thm:V2_relaxed_smoothness}
 Consider the problem of minimizing $f(x)=\ExpSub{\xi \sim \cD}{f_{\xi}(x)}$. Let $f$ be twice differentiable, and let   Assumptions~\ref{assum:Hessian_boundedvariance_v2},~\ref{assum:lower_bound},~\ref{assum:Relaxed_Lipschitz_Grad}, and~\ref{assum:Relaxed_Lipschitz_Hessian} hold. 
Then, the iterates $\{x_k\}$ generated by Variant 2 of \Cref{alg:2_order_momentum} with $\beta_k=1-\alpha_k$ and with 
\begin{eqnarray*}
  \alpha_k =  \alpha = \frac{1}{(K+1)^{\nicefrac{2}{3}}}, \quad \text{and} \quad  \eta_k= \eta = \frac{\hat \eta}{(K+1)^{\nicefrac{2}{3}}},
\end{eqnarray*}
where $\hat \eta =  \frac{1}{3}\min\left\{ \frac{1}{L_1} , \frac{1}{\sqrt{M_1 } }  \right\}$  satisfy  
\begin{eqnarray*}
    && \min_{k \in \{0,1,\ldots,K\}} \Exp{\normstar{\nabla f(x_k)}} 
    \leq   \frac{2(f(x_0) - f_{\inf})}{\hat\eta(K+1)^{\nicefrac{1}{3}}} + \frac{4\left( \Exp{\normstar{e_0}} + {\color{red}\bar\rho} \sigma_g\right)}{ (K+1)^{\nicefrac{1}{3}}}  + \frac{\hat\eta L_0 \exp(L_1 \hat \eta) }{(K+1)^{\nicefrac{2}{3}}}  \\
    && \hspace{1.0cm}+ \frac{2\hat \eta^2}{(K+1)^{\nicefrac{2}{3}}}\left(M_0 + \frac{2}{3}M_1 \hat \eta L_0 \exp(L_1\hat \eta)\right)  + 4\frac{{\color{red}\bar\rho}}{\color{blue} \ubar{\theta}} \frac{\hat \eta \sigma_H }{(K+1)^{\nicefrac{1}{3}}}  . 
\end{eqnarray*}
\end{theorem}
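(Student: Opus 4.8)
The plan is to track the momentum error $e_k\eqdef m_k-\nabla f(x_k)$ in the dual norm $\normstar{\cdot}$ and to pair a relaxed-smoothness descent inequality with a recursive estimate of $\Exp{\normstar{e_k}}$. For the descent step, I would plug the defining property of the linear minimization oracle, $\inp{m_k}{x_{k+1}-x_k}=-\eta_k\normstar{m_k}$ together with $\norm{x_{k+1}-x_k}=\eta_k$, into the descent estimate implied by the symmetric $(L_0,L_1)$-smoothness of \Cref{assum:Relaxed_Lipschitz_Grad}. Since one LMO step has length $\eta_k\le\hat\eta$, an auxiliary estimate controls the gradient along the segment by $G_k\eqdef\exp(L_1\hat\eta)\bigl(\normstar{\nabla f(x_k)}+\hat\eta L_0\bigr)$, which is where the $\exp(L_1\hat\eta)$ factors originate. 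The outcome is
\[
 f(x_{k+1})\le f(x_k)-\eta_k\normstar{\nabla f(x_k)}+2\eta_k\normstar{e_k}+\tfrac12\eta_k^2\bigl(L_0+L_1 G_k\bigr).
\]

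Next I would expand the Variant~2 update with $\beta_k=1-\alpha_k$ to obtain $e_{k+1}=(1-\alpha_k)e_k+(1-\alpha_k)S_k+\alpha_k Z_k$, where $S_k\eqdef\nabla f(x_k)-\nabla f(x_{k+1})+\nabla^2 f_{\xi_{k+1}}(x_{k+1})(x_{k+1}-x_k)$ and $Z_k\eqdef\nabla f_{\xi_{k+1}}(x_{k+1})-\nabla f(x_{k+1})$. Because in Variant~2 the point $x_{k+1}$ is already determined by $m_k$, conditioning on the history gives $\Exp{Z_k}=0$ and $\Exp{S_k}=B_k\eqdef\nabla f(x_k)-\nabla f(x_{k+1})+\nabla^2 f(x_{k+1})(x_{k+1}-x_k)$, the exact second-order Taylor remainder, which \Cref{assum:Relaxed_Lipschitz_Hessian} bounds by $\tfrac12(M_0+M_1 G_k)\eta_k^2$ in dual norm; the zero-mean part $W_k\eqdef S_k-B_k$ has conditional Euclidean second moment at most $\sigma_H^2\normeu{x_{k+1}-x_k}^2\le\sigma_H^2\eta_k^2/\ubar{\theta}^2$ by \Cref{assum:Hessian_boundedvariance_v2} and~\eqref{eqn:norm_diff}. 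I then split $e_k=\tilde e_k+n_k$ with $\tilde e_0=e_0$, $n_0=0$, $\tilde e_{k+1}=(1-\alpha_k)(\tilde e_k+B_k)$, $n_{k+1}=(1-\alpha_k)(n_k+W_k)+\alpha_k Z_k$; unrolling the $(1-\alpha)$-contraction yields $\Exp{\normstar{\tilde e_k}}\le(1-\alpha)^k\normstar{e_0}+\sum_{j<k}(1-\alpha)^{k-j}\Exp{\normstar{B_j}}$, while $\{(1-\alpha)^{k-1-j}((1-\alpha)W_j+\alpha Z_j)\}_j$ is a martingale-difference sequence, so $\Exp{\normeu{n_k}^2}\le 2\alpha^{-1}(\sigma_H^2\eta^2/\ubar{\theta}^2+\alpha^2\sigma_g^2)$ and hence $\Exp{\normstar{n_k}}\le\bar{\rho}\sqrt{\Exp{\normeu{n_k}^2}}$.

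With the schedule $\alpha=(K+1)^{-2/3}$ and $\eta=\hat\eta\alpha$, each contribution is $\cO((K+1)^{-1/3})$ after averaging over $k$: $\sum_k(1-\alpha)^k\le\alpha^{-1}=(K+1)^{2/3}$, so the $e_0$-term averages to $\normstar{e_0}/(K+1)^{1/3}$; the bias term carries $\alpha^{-1}\eta^2=\hat\eta^2(K+1)^{-2/3}$; and the noise term is $\sqrt{\alpha}\,(\sigma_g+\hat\eta\sigma_H/\ubar{\theta})=(K+1)^{-1/3}(\sigma_g+\hat\eta\sigma_H/\ubar{\theta})$. Finally I would sum the descent inequality over $k=0,\dots,K$, take total expectation, substitute the bound on $\tfrac1{K+1}\sum_k\Exp{\normstar{e_k}}$, divide by $(K+1)\eta$, and use $\min_k\Exp{\normstar{\nabla f(x_k)}}\le\tfrac1{K+1}\sum_k\Exp{\normstar{\nabla f(x_k)}}$; choosing $\hat\eta\le\tfrac13\min\{1/L_1,1/\sqrt{M_1}\}$ makes every gradient-norm-dependent piece hidden inside $L_1 G_k$ (from the descent) and $M_1 G_k$ (from the remainder, contributing a coefficient $\sim\hat\eta^2M_1$ after it passes through the error recursion) small enough to be absorbed by the leading $-\eta_k\normstar{\nabla f(x_k)}$ term, which is why the step size must satisfy both $\hat\eta L_1\lesssim1$ and $\hat\eta^2M_1\lesssim1$. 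What remains are exactly the five terms in the claimed bound.

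The main obstacle is the bookkeeping that relaxed smoothness forces in an arbitrary norm. Because the descent estimate and the Hessian Taylor remainder both carry coefficients proportional to $\normstar{\nabla f(x_k)}$, one cannot treat $G_k$ as a constant: it is genuinely necessary to prove the a-priori control $G_k\le\exp(L_1\hat\eta)(\normstar{\nabla f(x_k)}+\hat\eta L_0)$ along short LMO steps, then to carry the surviving $\normstar{\nabla f(x_k)}$-terms through both the error recursion and the telescoping descent sum, and to verify that the chosen $\hat\eta$ keeps their aggregate coefficient below $1$ — all while threading the comparison constants $\ubar{\rho},\bar{\rho},\ubar{\theta}$ of~\eqref{eqn:norm_diff} between the Euclidean-norm variance hypotheses of \Cref{assum:Hessian_boundedvariance_v2} and the arbitrary-norm smoothness of Assumptions~\ref{assum:Relaxed_Lipschitz_Grad} and~\ref{assum:Relaxed_Lipschitz_Hessian}. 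Relative to Variant~1, dispensing with the midpoint randomization $b_{k+1}$ is what makes $B_k$ an exact second-order remainder, at the cost of invoking \Cref{assum:Relaxed_Lipschitz_Hessian}.
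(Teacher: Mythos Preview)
Your proposal is correct and follows essentially the same route as the paper's proof: the same descent inequality from $(L_0,L_1)$-smoothness, the same three-way decomposition of the momentum error into a deterministic Taylor-remainder bias, a Hessian-noise martingale, and a gradient-noise martingale, with the same absorption of the $\normstar{\nabla f(x_k)}$-dependent coefficients via the step-size constraint $\hat\eta\le\tfrac13\min\{1/L_1,1/\sqrt{M_1}\}$. Your packaging through the segment bound $G_k$ and the bias/noise split $e_k=\tilde e_k+n_k$ differs only cosmetically from the paper's explicit two-term remainder estimate for $Z_f(x_k,x_{k+1})$ and its direct unrolling into three separate sums.
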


From Theorems~\ref{lemma:V1_relaxed_smoothness} and~\ref{thm:V2_relaxed_smoothness}, \Cref{alg:2_order_momentum} using two second-order momentum variants with $\beta_k=1-\alpha_k$ achieves the $\cO(1/K^{1/3})$ convergence in the expected gradient norm for minimizing relaxed smooth functions under the arbitrary norm setting.
Our methods match the convergence rates of second-order momentum methods established for standard smooth functions under the Euclidean norm setting by~\citet{salehkaleybar2022momentum,tran2022better} and under bounded variance assumption by~\citet{sadiev2025second}.
Our results do not require large minibatch sizes for computing stochastic gradients, and do not impose restrictive assumptions, such as  bounded stochastic gradient norms by~\citet{tran2022better}.
Moreover, our methods converge faster than LMO-based optimization methods using Polyak momentum analyzed by~\citet{pethick2025training, pethick2025generalized,kovalev2025understanding,riabinin2025gluon}. 
Also, note that Variant 2 of \Cref{alg:2_order_momentum} assumes the symmetric smoothness on the Hessian, which Variant 1 of \Cref{alg:2_order_momentum} does not. 
Moreover, although our theoretical results are established for \Cref{alg:2_order_momentum} with $\beta_k=1-\alpha_k$, the analysis can be extended to the case of general $\beta_k \in (0,1]$. In such cases, the convergence bounds include an additional term depending on the choice of $\beta_k$. 
Nonetheless, determining an optimal choice of $\beta_k$ remains unclear. We leave a tight convergence analysis under appropriate $\beta_k$ selection as an open direction for future work.

\textbf{Extension to extrapolated momentum.}
We can leverage our analysis to establish  the convergence of LMO-based methods in~\eqref{eqn:LMO_momentum}, which incorporate extrapolated momentum in~\eqref{eqn:extrapolated_m}, for minimizing relaxed smooth functions in the arbitrary norm setting (see~\Cref{app:IGT_relaxed_smoothness}). 
The resulting convergence matches the $\cO(1/K^{2/7})$ rate obtained by \citet[Corollary 5]{kovalev2025understanding} under traditional smoothness.

\textbf{Extension to momentum variance reduction.}
Our analysis can also be used to derive the convergence of  
the LMO-based methods in~\eqref{eqn:LMO_momentum} that incorporate momentum variance reduction (MVR/STORM/MARS), as defined in~\eqref{eq:MARS}. We derive the results for minimizing relaxed smooth functions in an arbitrary norm setting (see~\Cref{app:MVR_relaxed_smoothness}), assuming generalized mean-squared smoothness in an arbitrary norm setting (see~\Cref{assum:Relaxed_Mean_Squared_Lipschitz_Grad}). 
Notably, our derived convergence rate matches the $\cO(1/K^{1/3})$ rate obtained by \citet[Theorem 3]{kovalev2025non} under traditional matrix-type smoothness.

\section{Numerical Experiments}

We conduct numerical experiments to investigate the performance of LMO-based  methods using Polyak momentum, extrapolated momentum~\citep{cutkosky2020momentum}, and two second-order momentum variants (\Cref{alg:2_order_momentum}). 
Specifically, the methods using two second-order momentum variants with $\beta_k=1-\alpha_k$ are referred to as SOM-V1 and SOM-V2, while those with general $\beta_k$ values are denoted by $\beta$-SOM-V1 and $\beta$-SOM-V2. 
We implemented these algorithms using PyTorch~\citep{paszke2019pytorch}, and benchmarked them for two nonconvex problems that satisfy symmetric $(L_0,L_1)$ smoothness: problems of training Multi-Layer Perceptrons (MLPs) and Long Short-Term Memory (LSTM) networks. We reported our results for logistic regression problems with nonconvex regulaization in~\Cref{sec:exp}. 
For all experiments, each element of the initial point $x_0\in \R^d$ was generated from the standard normal distribution $\cN(0,1)$, the random seed was fixed, and also the learning rate $\eta_k$ and the momentum parameter $\alpha_k$ were chosen as follows: (1) $\alpha_k = \frac{1}{\sqrt{k+1}}$ and $\eta_k = \frac{\eta_0}{(k+1)^{3/4}}$ for Polyak momentum, (2) $\alpha_k = \frac{1}{(k+1)^{4/7}}$ and $ \eta_k = \frac{\eta_0}{(k+1)^{5/7}}$ for extrapolated momentum, and $\alpha_k = \frac{1}{(k+1)^{2/3}}$ and $\eta_k = \frac{\eta_0}{(k+1)^{2/3}}$ for all second-order momentum variants.

\subsection{MLP}

We evaluated the algorithms for binary classification tasks using the MLP model with two hidden layers over  the \texttt{splice} dataset from the \texttt{libsvm} library~\citep{chang2011libsvm}. 
The dataset comprises $1000$ training samples and $60$ features. 
We minimize the objective function $f(x) = \cL(x) + R(x)$, where $\cL(x)$ is the binary cross-entropy loss with logits, and $R(x)$ is the nonconvex Welsch regularizer defined by $R(x) = \lambda \sum_{i=1}^d x_i^2/(1+x_i^2)$ with $\lambda = 0.01$.




From {\color{black}Figure~\ref{fig:momentum_comparison_main_text}}, LMO-based methods using second-order momentum outperform those using Polyak or extrapolated momentum. In particular, Variant 2 of second-order momentum achieves superior convergence in training loss, compared to other momentum variants. Furthermore, {\color{black} from Figure~\ref{fig:hvp_comparison_main_text}}, the scaling factor $\beta_k$ further enhances the convergence achieved by second-order momentum. In particular, Variant 2 with the scaling factor $\beta_k$ exhibits the most consistent and strongest convergence performance in training loss, outperforming Variant 1 with the same scaling factor.
Additionally, {\color{black}Figure~\ref{fig:batch_comparison_main_text}} shows that increasing the mini-batch size for computing stochastic gradients improves the convergence performance of LMO-based methods using second-order momentum. Specifically, the methods using Variant 2 with the scaling factor $\beta_k$ achieve higher solution accuracy with a mini-batch size of $32$ compared to sizes $1$ and $16$.

\begin{figure}[h!]
    \centering
    \includegraphics[width=0.47\textwidth]{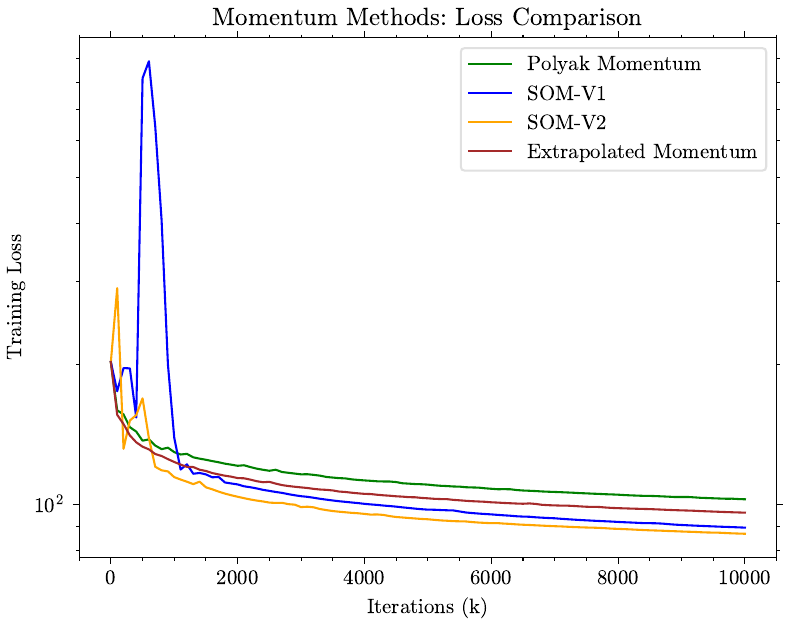}
    \includegraphics[width=0.47\textwidth]{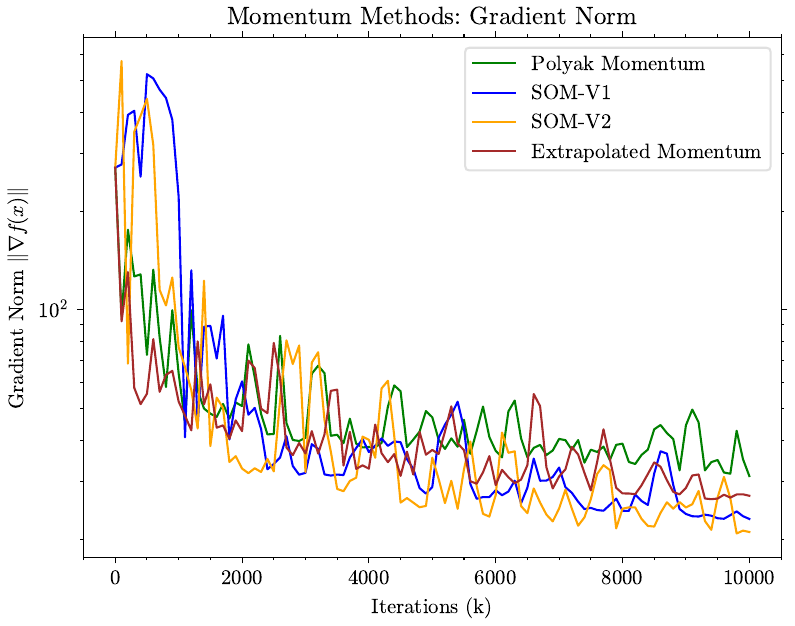}
    \caption{Comparison of training the MLP model on the \texttt{splice} dataset using LMO-based methods with various momentum updates  in the training loss (left) and gradient norm (right).
    }
    \label{fig:momentum_comparison_main_text}
\end{figure}
\begin{figure}[h!]
    \centering
    \includegraphics[width=0.47\textwidth]{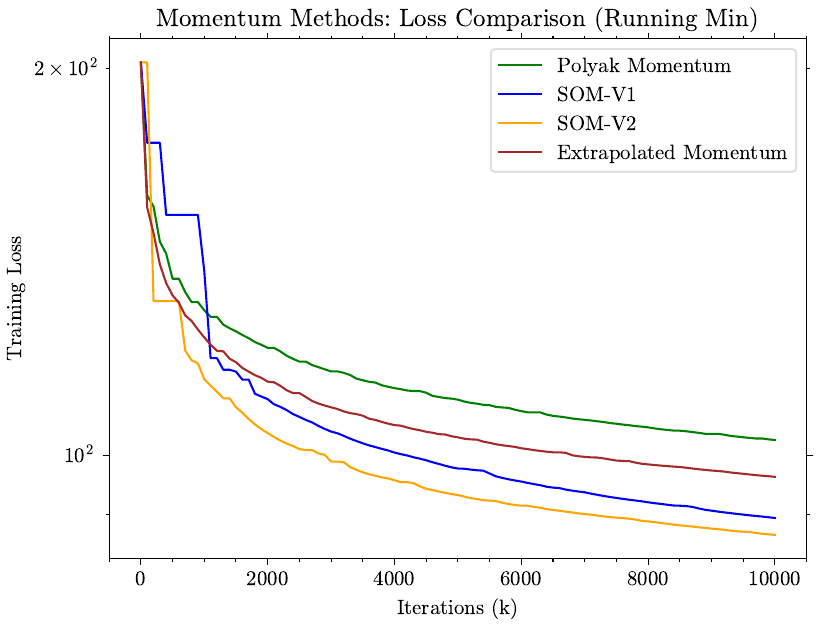}
    \includegraphics[width=0.47\textwidth]{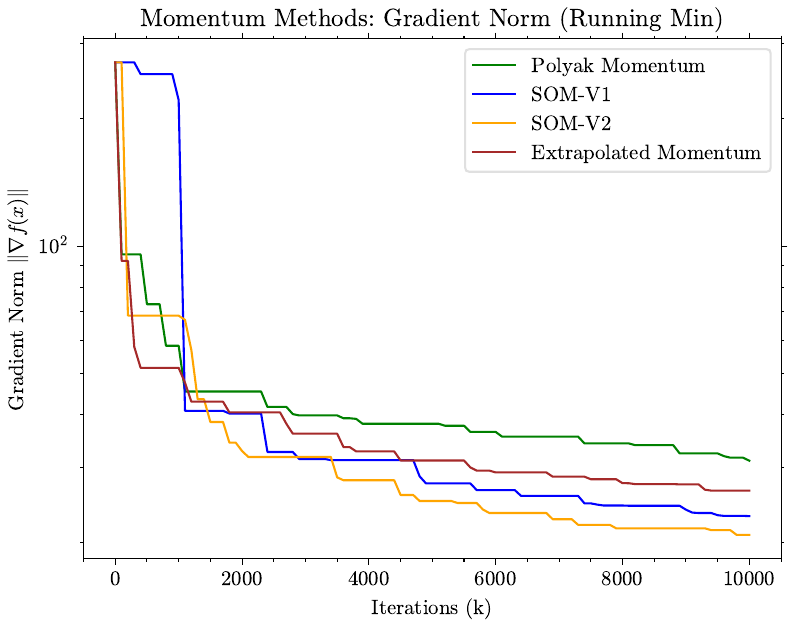}
    \caption{Comparison of training the MLP model on the \texttt{splice} dataset using LMO-based methods with second-order momentum (SOM) variants  in the training loss (left) and gradient norm (right).}
    \label{fig:hvp_comparison_main_text}
\end{figure}

\begin{figure}[h!]
    \centering
    \includegraphics[width=0.47\textwidth]{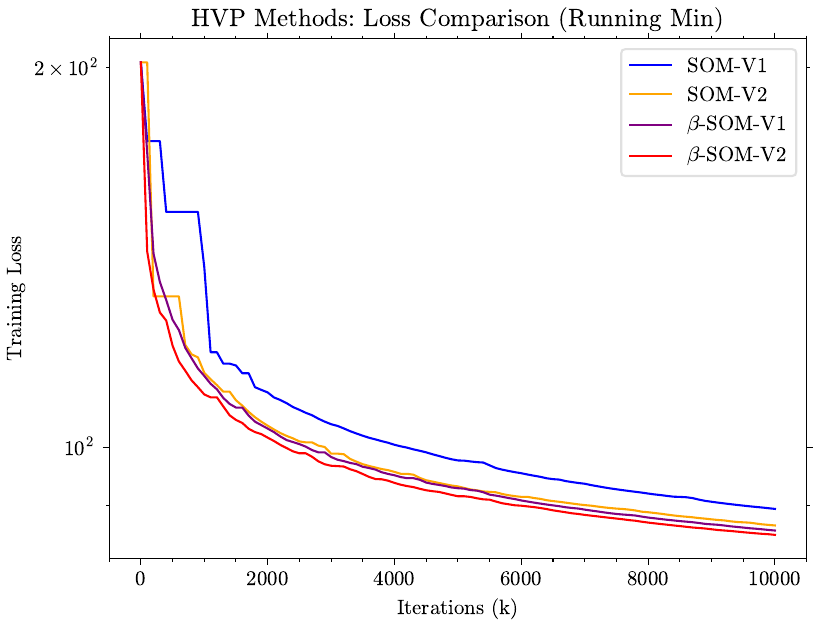}
    \includegraphics[width=0.47\textwidth]{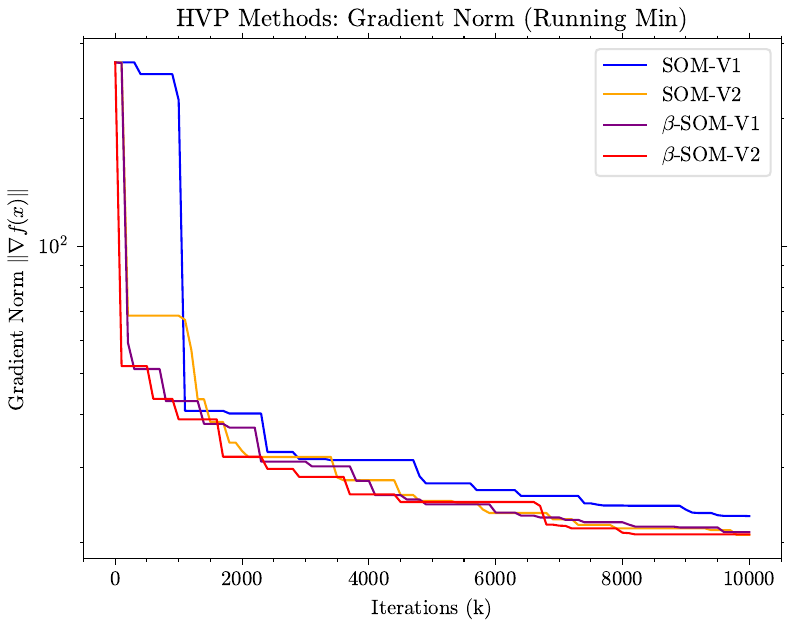}
    \caption{Performance of $\beta$-SOM-V2 (Variant 2 of second-order momentum with any $\beta_k$ scaling factor) when we vary the mini-batch size for training the MLP model on the \texttt{splice} dataset.}
    \label{fig:batch_comparison_main_text}
\end{figure}

\subsection{LSTM Training}
We benchmarked the algorithms for solving 
 a word-level language modeling task using a two-layer LSTM network. The model consists of an embedding layer with $200$ dimensions, followed by two stacked LSTM layers, each with $200$ hidden units. 
A final linear decoder layer produces logits over the vocabulary for each input token.
The experiments were conducted on the Penn Treebank (PTB) dataset, a standard dataset for evaluating language models. The dataset is split into training, validation, and test sets, with approximately $929,000$ tokens in the training set. The vocabulary is constructed from the unique words in the training data, with a total of $10,000$ tokens, including an end-of-sentence token \texttt{<eos>}. Tokenization is performed by splitting raw text on whitespace, converting each word into an index from the vocabulary. The model is trained using sequential batches, employing Truncated Backpropagation Through Time (BPTT) with a sequence length of $35$.
For training, we used a minibatch size of $20$ and the standard Cross-Entropy loss.

{\color{black}From~\Cref{fig:lstm_lmo_comparison_main_text}}, we  observe the superior performance of LMO-based methods using Variant 2 of second-order momentum and the scaling $\beta$ factor when we set the Euclidean norm (left) and the $\ell_{\infty}$-norm (right) in the LMO update.

\begin{figure}[h!]
    \centering
    \begin{subfigure}[b]{0.47\textwidth}
        \includegraphics[width=\textwidth]{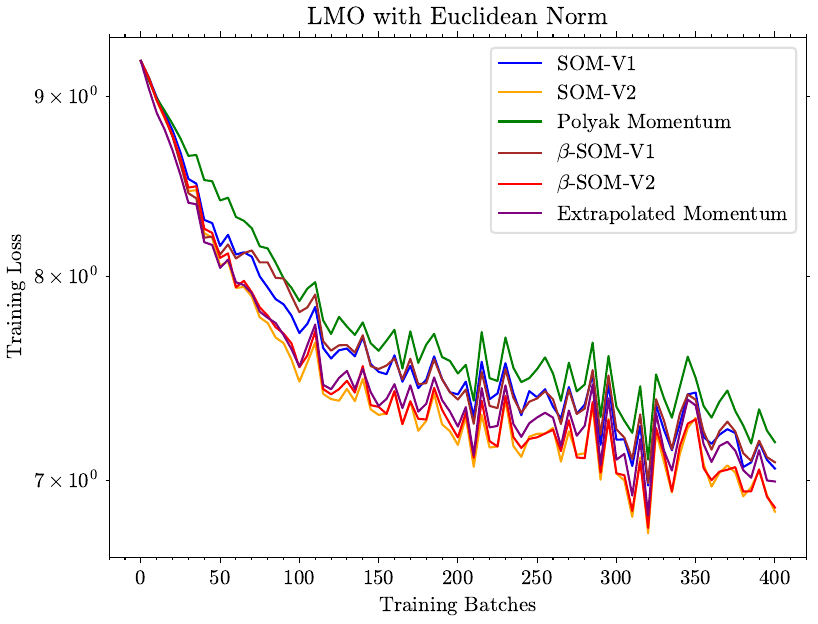}
        \caption{Training loss vs. training batches of LMO-based methods with respect to the Euclidean norm $\normeu{\cdot}$. Other momentum variants attain superier performance to Polyak Momentum.}
        \label{fig:lstm_lmo_loss_main_text}
    \end{subfigure}
    \hfill
    \begin{subfigure}[b]{0.47\textwidth}
        \includegraphics[width=\textwidth]{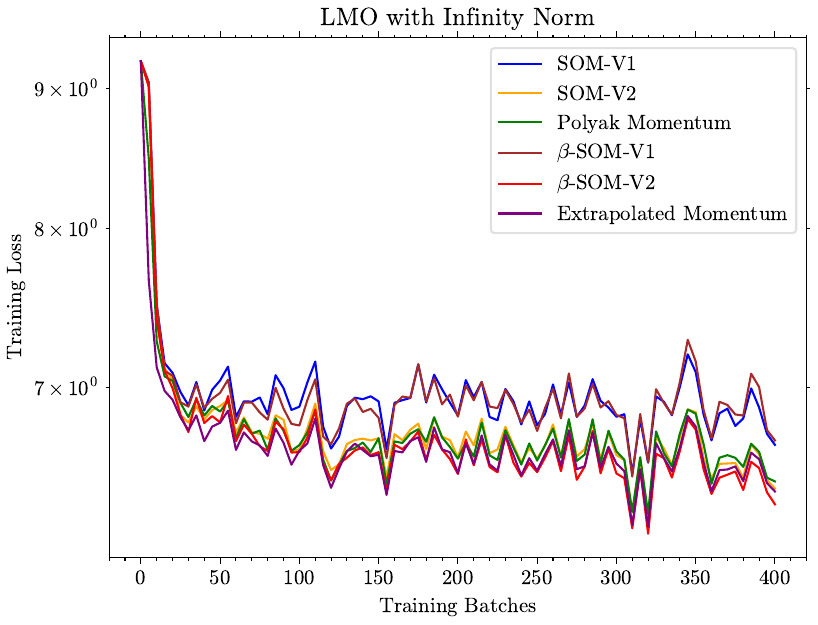}
        \caption{Training loss vs. training batches of LMO-based methods with respect to the $\ell_\infty$-norm $\norm{\cdot}_{\infty}$. Variant 2 of second-order momentum and extrapolated momentum outperforms Polyak momentum. 
        }
        \label{fig:lstm_linf_loss_main_text}
    \end{subfigure}
    \caption{Comparison of LMO-based methods using momentum variants for the word-level language modeling task using the LSTM network on the Penn Treebank (PTB) dataset.}
    \label{fig:lstm_lmo_comparison_main_text}
\end{figure}

\section{Conclusion}

We have proposed LMO-based methods that incorporate two variants of second-order momentum for minimizing nonconvex functions under relaxed smoothness and arbitrary norm settings. The proposed methods achieve a convergence rate of $\mathcal{O}(1/K^{1/3})$ in terms of the expected gradient norm, improving upon the $\mathcal{O}(1/K^{1/4})$ rate of existing LMO-based methods utilizing Polyak momentum in generalized smoothness settings. Notably, our theoretical results match the best-known rates for second-order momentum methods under standard smoothness and Euclidean norm settings.
Furthermore, we provide convergence guarantees for LMO-based methods with MVR and extrapolated momentum under generalized mean-squared smoothness and generalized second-order smoothness assumptions, respectively. 
Finally, our numerical experiments confirm the advantages of second-order momentum, consistently demonstrating superior convergence compared to Polyak and extrapolated momentum. Moreover, incorporating the adaptive scaling factor from MARS yields further improvements, resulting in faster convergence and higher precision.

\subsection*{Acknowledgments}
The research reported in this publication was supported by funding from King Abdullah University of Science and Technology (KAUST): i) KAUST Baseline Research Scheme, ii) CRG Grant ORFS-CRG12-2024-6460, and iii) Center of Excellence for Generative AI, under award number 5940.

\bibliography{iclr2026_conference}
\bibliographystyle{iclr2026_conference}

\newpage
\tableofcontents

\newpage 
\appendix

\section{Notations}

For random variables $u,v$, we use $\Exp{u}$ for the expectation of $u$, and $\ExpSub{v}{u}$ for the expectation of $u$ with respect to $v$.
For functions $f, g : \R^d \to \R$, we use $\nabla f$ and $\nabla^2 f$ to denote the gradient and Hessian of $f$, respectively. We write $f(x) = \cO(g(x))$ to indicate that there exists a constant $C > 0$ and a value $x_0 \in \R$ such that $f(x) \leq C \cdot g(x)$ for all $x \geq x_0$.
For vectors $x,y\in\R^d$, $\inp{x}{y} = \sum_{i=1}^d x_i y_i$, while $\norm{x}$,
 $\normstar{x}$, $\normeu{x}$, and $\norm{x}_{\infty}$ denote its arbitrary norm,  associated dual norm,  Euclidean norm, and  $\ell_{\infty}$-norm, respectively. 

\section{Useful Lemma from Relaxed Smoothness}

From Assumptions~\ref{assum:Relaxed_Lipschitz_Grad} and~\ref{assum:Relaxed_Lipschitz_Hessian}, we obtain the following lemma that is useful for our analysis. 
\begin{lemma}[Proposition 3.2 and Theorem 1 from \citep{chen2023generalized}]
\label{lemma:Relaxed_Lipschitz_Grad}
    Let $f$ satisfy Assumption~\ref{assum:Relaxed_Lipschitz_Grad}. Then, for all $x,y \in \R^d$,
    \begin{eqnarray*}
        \normstar{\nabla f(x) -\nabla f(y)} &\leq& (L_0 + L_1\normstar{\nabla f(y)})\exp\left(L_1\norm{x-y}\right)\norm{x-y};\\
        D_f(x,y) &\leq& \frac{1}{2}(L_0 + L_1\normstar{\nabla f(y)})\exp\left(L_1\norm{x-y}\right)\norm{x-y}^2,
    \end{eqnarray*}
    where $D_f(x,y) \eqdef f(x) - f(y) - \la\nabla f(y), x-y\ra$ denotes the Bregman divergence.
    Furthermore, if $f$ is twice-differentiable, then for any $x \in \R^d$,
    \begin{eqnarray*}
        \norm{\nabla^2 f(x)}_{\text{op}} & \eqdef & \sup_{u\neq 0}\frac{\normstar{\nabla^2f(x)u}}{\norm{u}} \leq L_0 + L_1 \normstar{\nabla f(x)}.
    \end{eqnarray*}
\end{lemma}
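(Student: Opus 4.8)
The plan is to reduce all three claims to the first (gradient-difference) inequality, which is the analytic core; the Bregman bound and the Hessian bound then follow by, respectively, integrating along the segment and passing to a directional-derivative limit. Throughout I parametrize the segment between the two points by $x_t \eqdef y + t(x-y)$ for $t \in [0,1]$, so that $\norm{x_t - y} = t\norm{x-y}$ and every point of the form $\theta x_s + (1-\theta)y$ appearing in Assumption~\ref{assum:Relaxed_Lipschitz_Grad} again lies on this segment; this is what lets the self-referential supremum in the assumption be controlled by quantities defined along a single segment.

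For the first inequality I would track $\psi(t) \eqdef \normstar{\nabla f(x_t) - \nabla f(y)}$, which satisfies $\psi(0)=0$. Applying Assumption~\ref{assum:Relaxed_Lipschitz_Grad} to the nearby points $x_{t+h}$ and $x_t$, and using the triangle inequality $\normstar{\nabla f(x_s)} \le \normstar{\nabla f(y)} + \psi(s)$ to absorb the supremum term, I get, writing $a \eqdef \normstar{\nabla f(y)}$ and $\rho \eqdef \norm{x-y}$,
\[
\psi(t+h) - \psi(t) \le \big(L_0 + L_1 a + L_1 \sup_{s\in[t,t+h]}\psi(s)\big)\,h\rho .
\]
Sending $h \to 0^{+}$ converts this into the Dini differential inequality $D^{+}\psi(t) \le (L_0 + L_1 a + L_1\psi(t))\rho$, and Grönwall's lemma with $\psi(0)=0$ yields $\psi(t) \le \tfrac{L_0 + L_1 a}{L_1}\big(e^{L_1\rho t}-1\big)$. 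Evaluating at $t=1$ and invoking the elementary estimate $(e^{z}-1)/z \le e^{z}$ for $z>0$ gives exactly $\normstar{\nabla f(x)-\nabla f(y)} \le (L_0 + L_1\normstar{\nabla f(y)})\,e^{L_1\rho}\rho$. Working with the difference $\psi$ rather than with $\normstar{\nabla f(x_t)}$ itself is what removes a spurious additive $L_0$ and produces the stated constant.

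For the Bregman bound I would write $D_f(x,y) = \int_0^1 \la \nabla f(x_t) - \nabla f(y),\, x-y\ra\,dt$, bound the integrand by $\normstar{\nabla f(x_t)-\nabla f(y)}\,\norm{x-y}$ via duality of $\norm{\cdot}$ and $\normstar{\cdot}$, and substitute the first inequality applied to the pair $(x_t,y)$, namely $\normstar{\nabla f(x_t)-\nabla f(y)} \le (L_0+L_1 a)\,t\rho\,e^{L_1\rho}$ (using $t\le 1$ in the exponent); the remaining $\int_0^1 t\,dt = 1/2$ produces the factor $\tfrac12$. For the Hessian bound, for twice-differentiable $f$ I would use $\nabla^2 f(x)u = \lim_{h\to 0^{+}} h^{-1}\big(\nabla f(x+hu)-\nabla f(x)\big)$, apply the first inequality to the pair $(x+hu,\,x)$ to get $\normstar{\nabla f(x+hu)-\nabla f(x)} \le (L_0+L_1\normstar{\nabla f(x)})\,e^{L_1 h\norm{u}}\,h\norm{u}$, divide by $h$, let $h\to 0^{+}$, and take the supremum over $u\ne 0$.

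The main obstacle is making the Grönwall step rigorous in the presence of the self-referential supremum: the right-hand side of Assumption~\ref{assum:Relaxed_Lipschitz_Grad} involves $\sup_{\theta}\normstar{\nabla f(\cdot)}$ over the very segment being bounded, so I must argue that $\psi$ is regular enough (continuous, with $\sup_{s\in[t,t+h]}\psi(s)\to\psi(t)$ as $h\to0$) to legitimize passing to the Dini derivative. This rests on continuity of $\nabla f$ along the segment, which I would secure by a short bootstrap: on a subinterval $[0,h]$ short enough that $L_1\rho h<1$, the self-contained estimate $\sup_{s\in[0,h]}\psi(s) \le (L_0+L_1 a)\,h\rho/(1-L_1\rho h)$ follows directly from the finite-difference inequality, forcing local boundedness of $\normstar{\nabla f}$ and hence genuine local Lipschitzness of $\nabla f$; the continuous Grönwall estimate then applies on all of $[0,1]$.
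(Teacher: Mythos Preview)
Your argument is correct and is exactly the standard Gr\"onwall-on-the-segment proof that \citet{chen2023generalized} give for Proposition~3.2 and Theorem~1; the paper itself does not spell any of this out but simply points to that reference and remarks that the same steps go through with $\norm{\cdot}$ and $\normstar{\cdot}$ in place of the Euclidean norm. Your bootstrap for the continuity of $\nabla f$ along the segment is a welcome extra, since the cited source works in the Euclidean, twice-differentiable setting and can take this for granted.
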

\begin{proof}
    The proof follows the same lines as Proposition 3.2 and Theorem 1 in \citep{chen2023generalized}, adapted to the setting of arbitrary norms. 
\end{proof}
\begin{lemma}\label{lemma:Relaxed_Grad_and_Hessian}
Let $f$ satisfy  Assumptions~\ref{assum:Relaxed_Lipschitz_Grad} and~\ref{assum:Relaxed_Lipschitz_Hessian}. Then, for all $x,y\in\R^d$, 
\begin{eqnarray*}
    \normstar{Z_f(x,y)} 
    &\leq & \frac{1}{2}(M_0 + M_1 \normstar{\nabla f(y)}) \norm{x-y}^2 \\
    && + \frac{1}{3} M_1 (L_0+L_1 \normstar{\nabla f(y)}) \exp(L_1 \norm{x-y}) \norm{x-y}^3,
\end{eqnarray*}
where we denote $Z_f(x,y) \eqdef \nabla f(x) - \nabla f(y) - \nabla^2 f(y)(x-y)$.
\end{lemma}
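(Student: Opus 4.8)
\textbf{Proof proposal for Lemma~\ref{lemma:Relaxed_Grad_and_Hessian}.}

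The plan is to estimate $Z_f(x,y) = \nabla f(x) - \nabla f(y) - \nabla^2 f(y)(x-y)$ via the fundamental theorem of calculus along the segment joining $y$ to $x$. Writing $x_\theta = \theta x + (1-\theta) y$ for $\theta \in [0,1]$, we have $\nabla f(x) - \nabla f(y) = \int_0^1 \nabla^2 f(x_\theta)(x-y)\,d\theta$, so that
\begin{eqnarray*}
Z_f(x,y) = \int_0^1 \left(\nabla^2 f(x_\theta) - \nabla^2 f(y)\right)(x-y)\,d\theta.
\end{eqnarray*}
Taking dual norms, using the triangle inequality for integrals and the operator-norm bound $\normstar{(\nabla^2 f(x_\theta) - \nabla^2 f(y))(x-y)} \leq \norm{\nabla^2 f(x_\theta) - \nabla^2 f(y)}_{\text{op}} \norm{x-y}$, and then applying Assumption~\ref{assum:Relaxed_Lipschitz_Hessian} with the pair $(x_\theta, y)$ — noting $\norm{x_\theta - y} = \theta \norm{x-y}$ — gives
\begin{eqnarray*}
\normstar{Z_f(x,y)} \leq \int_0^1 \left(M_0 + M_1 \sup_{\phi \in [0,\theta]} \normstar{\nabla f(x_\phi)}\right) \theta \norm{x-y}^2\,d\theta.
\end{eqnarray*}

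The remaining work is to control $\sup_{\phi \in [0,1]} \normstar{\nabla f(x_\phi)}$ in terms of $\normstar{\nabla f(y)}$. For this I would use the first inequality of Lemma~\ref{lemma:Relaxed_Lipschitz_Grad} applied to the pair $(x_\phi, y)$: since $\norm{x_\phi - y} = \phi\norm{x-y} \leq \norm{x-y}$,
\begin{eqnarray*}
\normstar{\nabla f(x_\phi)} \leq \normstar{\nabla f(y)} + (L_0 + L_1 \normstar{\nabla f(y)})\exp(L_1\norm{x-y})\norm{x-y}.
\end{eqnarray*}
Substituting this bound for the supremum term, the integrand splits into a part giving $\tfrac12 (M_0 + M_1\normstar{\nabla f(y)})\norm{x-y}^2$ after $\int_0^1 \theta\,d\theta = \tfrac12$, and a part giving $M_1 (L_0 + L_1\normstar{\nabla f(y)})\exp(L_1\norm{x-y})\norm{x-y}^3 \cdot \int_0^1 \theta\,d\theta$, i.e. the factor $\tfrac12$. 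To land exactly on the claimed constant $\tfrac13$ rather than $\tfrac12$, I would instead keep the $\theta$-dependence sharper: apply Lemma~\ref{lemma:Relaxed_Lipschitz_Grad} with the displacement $\theta\norm{x-y}$ (so the correction term carries a factor $\theta$, not $1$), yielding $\sup_{\phi\in[0,\theta]}\normstar{\nabla f(x_\phi)} \leq \normstar{\nabla f(y)} + (L_0 + L_1\normstar{\nabla f(y)})\exp(L_1\norm{x-y})\,\theta\norm{x-y}$, and then integrate $\int_0^1 \theta^2\,d\theta = \tfrac13$ for the cubic term while $\int_0^1 \theta\,d\theta = \tfrac12$ handles the quadratic term. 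Combining the two contributions gives exactly the stated inequality.

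The main obstacle is the bookkeeping around the supremum in Assumption~\ref{assum:Relaxed_Lipschitz_Hessian}: one must be careful that the $\sup_{\theta\in[0,1]}\normstar{\nabla f(\theta x_\phi + (1-\theta) y)}$ appearing there is itself a supremum over the subsegment $[y, x_\phi]$, which is contained in $[y,x]$, so it is dominated by $\sup_{\phi\in[0,1]}\normstar{\nabla f(x_\phi)}$; bounding this latter quantity uniformly (rather than $\theta$-dependently) is what forces the $\theta$-careful argument to recover the tight $\tfrac13$. Everything else is routine: the integral representation, monotonicity of the operator norm, and the elementary integrals of $\theta$ and $\theta^2$ over $[0,1]$.
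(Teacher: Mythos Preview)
Your proposal is correct and follows essentially the same route as the paper's proof: both write $Z_f(x,y)$ as $\int_0^1 (\nabla^2 f(x_\theta) - \nabla^2 f(y))(x-y)\,d\theta$, apply Assumption~\ref{assum:Relaxed_Lipschitz_Hessian} to the Hessian increment, split off $\normstar{\nabla f(y)}$ from the supremum, and then use Lemma~\ref{lemma:Relaxed_Lipschitz_Grad} with the $\theta$-scaled displacement so that the cubic term picks up $\int_0^1 \theta^2\,d\theta = \tfrac13$. Your explicit remark about why the $\theta$-dependent bound is needed to recover $\tfrac13$ rather than $\tfrac12$ is exactly the key observation, and the paper handles it the same way (keeping the factor $\tau$ from $\norm{\hat x - y}$ together with the factor $\tau$ from the gradient-increment bound before integrating).
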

\begin{proof}
Since $f$ is twice differentiable, 
\begin{eqnarray*}
  Z_f(x,y) 
  & = & \int_{\tau=0}^1 \nabla^2 f(x + \tau(y-x)) (y-x) d\tau  - \nabla^2 f(y)(x-y) \\
  & = & \int_{\tau=0}^1 [\nabla^2 f(x + \tau(y-x)) (y-x)  - \nabla^2 f(y)(x-y)] d\tau.  
\end{eqnarray*}
Therefore,
\begin{eqnarray*}
    \normstar{Z_f(x,y) } 
    &\leq& \int_{\tau=0}^1 \normstar{\nabla^2 f(x + \tau(y-x)) (y-x)  - \nabla^2 f(y)(x-y)} d\tau. 
\end{eqnarray*}

Next, from~\Cref{assum:Relaxed_Lipschitz_Hessian}, 
\begin{eqnarray*}
    \normstar{Z_f(x,y) } 
    &\leq& \int_{\tau=0}^1 (M_0 + M_1 \underset{\theta \in [0,1]}{\sup} \normstar{ \nabla f(\theta \hat x + (1-\theta)y)}) \norm{\hat x-y} d\tau  \\
    &\leq& \frac{M_0 \norm{x-y}^2}{2}\\
    && + M_1 \norm{x-y}^2  \int_{\tau=0}^1 \tau  \underset{\theta \in [0,1]}{\sup} \normstar{ \nabla f(\theta \hat x + (1-\theta)y) - {\color{black} \nabla f(y)} + {\color{black}\nabla f(y)} }) d\tau \\
    &\leq& \frac{M_0 \norm{x-y}^2}{2} + M_1 \norm{x-y}^2  \int_{\tau=0}^1 \tau  \underset{\theta \in [0,1]}{\sup} \normstar{ \nabla f(\theta \hat x + (1-\theta)y) - {\color{black} \nabla f(y)} }) d\tau \\
    && + M_1 \norm{x-y}^2  \int_{\tau=0}^1 \tau  \underset{\theta \in [0,1]}{\sup} \normstar{ {\color{black} \nabla f(y)} } d\tau \\
    &=& \frac{M_0+M_1\norm{{\color{black}\nabla f(y)}}}{2}\norm{x-y}^2 \\
    &&+ M_1 \norm{x-y}^2  \int_{\tau=0}^1 \tau  \underset{\theta \in [0,1]}{\sup} \normstar{ \nabla f(\theta \hat x + (1-\theta)y) - {\color{black} \nabla f(y)} }) d\tau,
\end{eqnarray*}
where $\hat x = x + \tau(y-x)$.

Next, from \Cref{assum:Relaxed_Lipschitz_Grad},
\begin{eqnarray*}
\normstar{ \nabla f(\theta \hat x + (1-\theta)y) - {\color{black} \nabla f(y)} } 
& \leq & (L_0+L_1 \normstar{\nabla f(y)}) \exp( \tau\theta L_1 \norm{x-y}) \tau\theta \norm{x-y} \\
& \overset{\theta \leq 1}{\leq} & (L_0+L_1 \normstar{\nabla f(y)}) \exp( \tau L_1 \norm{x-y}) \tau \norm{x-y}.
\end{eqnarray*}

Therefore, by plugging this result into the upper-bound for $\normstar{\nabla f(x) - \nabla f(y) - \nabla^2 f(y)(x-y) }$, we complete the proof. 

\end{proof}

\newpage
\section{Proof of~\Cref{lemma:V1_relaxed_smoothness}  }

We prove~\Cref{lemma:V1_relaxed_smoothness} in the following steps.

\paragraph{Step 1) Proving the descent inequality.}

We follow the proof arguments from Lemma D.1. of \citet{pethick2025training}.
By~\Cref{lemma:Relaxed_Lipschitz_Grad}, and by the fact that $ x_{k+1}= x_k + \text{lmo}(m_k)$, 
\begin{eqnarray*}
    f(x_{k+1}) 
    & \leq & f(x_k) + \inp{\nabla f(x_k)}{\text{lmo}(m_k)} \\
    && + \frac{L_0 + L_1 \normstar{\nabla f(x_k)}}{2} \exp(L_1 \norm{x_{k+1}-x_k})\norm{\text{lmo}(m_k)}^2 \\
    & = &  f(x_k) + \inp{\nabla f(x_k) - m_k}{\text{lmo}(m_k)} + \inp{m_k}{\text{lmo}(m_k)} \\
    && + \frac{L_0 + L_1 \normstar{\nabla f(x_k)}}{2} \exp(L_1 \norm{x_{k+1}-x_k})\norm{\text{lmo}(m_k)}^2.
\end{eqnarray*}

By Cauchy-Schwartz inequality, 
\begin{eqnarray*}
    f(x_{k+1}) 
& \leq & f(x_k) + \normstar{\nabla f(x_k) - m_k}\norm{\text{lmo}(m_k)} + \inp{m_k}{\text{lmo}(m_k)} \\
    && + \frac{L_0 + L_1 \normstar{\nabla f(x_k)}}{2} \exp(L_1 \norm{x_{k+1}-x_k})\norm{\text{lmo}(m_k)}^2.
\end{eqnarray*}

By the fact that $\norm{x_{k+1} - x_k} = \norm{\text{lmo}(m_k)} \leq \eta_k$, 
\begin{eqnarray*}
    f(x_{k+1}) 
     \leq  f(x_k) + \eta_k\normstar{\nabla f(x_k) - m_k}  + \inp{m_k}{\text{lmo}(m_k)} + \frac{L_0 + L_1 \normstar{\nabla f(x_k)}}{2} \exp(L_1 \eta_k) \eta_k^2.
\end{eqnarray*}

Since for all $u \in \mathcal{X}$
\begin{eqnarray*}
    \normstar{u} = \underset{v: \norm{v} \leq 1}{\max} \inp{u}{v} = \max_{v: \norm{v} \leq \eta} \inp{u}{\frac{1}{\eta} v} = - \inp{u}{ \frac{1}{\eta} \text{lmo}(u)},
\end{eqnarray*}
we obtain
\begin{eqnarray*}
    \inp{m_k}{\text{lmo}(m_k)} = \eta_k \inp{m_k}{ \frac{1}{\eta_k} \text{lmo}(m_k)} = - \eta_k \normstar{m_k}.
\end{eqnarray*}
Therefore, 
\begin{eqnarray*}
    f(x_{k+1}) 
  \leq  f(x_k) + \eta_k \normstar{\nabla f(x_k) - m_k} - \eta_k \normstar{m_k}  + \frac{L_0 + L_1 \normstar{\nabla f(x_k)}}{2} \exp(L_1 \eta_k) \eta_k^2.
\end{eqnarray*}

By the triangle inequality, i.e. $\normstar{a} \geq \normstar{b} - \normstar{a-b}$ for $a,b\in\R^d$, 
\begin{eqnarray*}
    f(x_{k+1}) 
  \leq  f(x_k) + 2\eta_k \normstar{\nabla f(x_k) - m_k} - \eta_k \normstar{\nabla f(x_k)}  + \frac{L_0 + L_1 \normstar{\nabla f(x_k)}}{2} \exp(L_1 \eta_k) \eta_k^2.
\end{eqnarray*}

Finally, by summing the inequality over $k=0,1,\ldots,K$ and by re-arranging the terms, 
\begin{eqnarray}
 \sum_{k=0}^K \eta_k  \varphi_k \normstar{\nabla f(x_k)} &\overset{(a)}{=}&\sum_{k=0}^K \eta_k  \left( 1 - \exp(L_1 \eta_k) 
 \frac{L_1\eta_k}{2} \right)\normstar{\nabla f(x_k)} \notag\\ 
 &\leq&  \sum_{k=0}^K f(x_k) - f(x_{k+1}) + 2 \sum_{k=0}^K \eta_k \normstar{\nabla f(x_k)-m_k} + \frac{L_0}{2} \sum_{k=0}^K \exp(L_1 \eta_k) \eta_k^2 \notag\\
 &\overset{(b)}{\leq}&  (f(x_0)-f_{\inf}) + 2 \sum_{k=0}^K \eta_k \normstar{\nabla f(x_k)-m_k} + \frac{L_0}{2} \sum_{k=0}^K \exp(L_1 \eta_k) \eta_k^2 \notag\\
 &\overset{(c)}{=}& \Delta + 2 \sum_{k=0}^K \eta_k \normstar{\nabla f(x_k)-m_k} + \frac{L_0}{2} \sum_{k=0}^K \exp(L_1 \eta_k) \eta_k^2, \label{eq:descent_inequality}
\end{eqnarray}
where we reach  $(a)$ by defining $\varphi_k \eqdef\left( 1 - \exp(L_1 \eta_k) 
 \nicefrac{L_1\eta_k}{2} \right) $, $(b)$ by using the fact that $ {f(x) \geq f_{\inf}}$, and $(c)$ by denoting $\Delta \eqdef f(x_0) - f_{\inf}$.

\paragraph{Step 2) Bounding the error term.}
Next, we bound $\normstar{e_{k+1}}$, where $e_{k+1} \eqdef \nabla f(x_{k+1})-m_{k+1}$. 
From the definition of $e_k$,
\begin{eqnarray}
    e_{k+1} 
    & = & m_{k+1} - \nabla f(x_{k+1}) \notag \\
    & \overset{(a)}{=} & (1-\alpha_k) [m_{k} + \nabla^2 f_{\xi_k}(\hat x_{k+1}) (x_{k+1}-x_{k})  - \nabla f(x_{k+1})]\notag\\
    &&+ \alpha_k[\nabla f_{\xi_k}(x_{k+1})-\nabla f(x_{k+1})] \notag \\
    &\overset{(b)}{=} & (1-\alpha_k) e_{k} + (1-\alpha_k)W_{k+1} +  \alpha_kV_{k+1}, \label{eqn:SHARP_e}
\end{eqnarray}
where we reach $(a)$ by \eqref{eqn:second_order_momentum_update}, and $(b)$ by denoting  $W_{k+1} \eqdef  \nabla^2 f_{\xi_{k+1}}(\hat x_{k+1}) (x_{k+1}-x_{k}) - (\nabla f(x_{k+1}) - \nabla f(x_{k}))$ and $V_{k+1} \eqdef\nabla f_{\xi_{k+1}}(x_{k+1})-\nabla f(x_{k+1})$.

By recursively applying this inequality, 
\begin{eqnarray*}
    e_{k+1} = \prod_{t=0}^k (1-\alpha_t) e_0 +\sum_{t=0}^k \left( \prod_{j=t+1}^k (1-\alpha_j) \right) (1-\alpha_t)  W_{t+1} +  \sum_{t=0}^k \left(\prod_{j=t+1}^k (1-\alpha_j) \right) \alpha_t V_{t+1}.
\end{eqnarray*}

Next, by taking $\normstar{\cdot}$, and by taking the expectation,
\begin{eqnarray*}
    \Exp{\normstar{e_{k+1} } } 
    & \overset{(a)}{\leq} & \normstar{e_0}  \prod_{t=0}^k (1-\alpha_t) +\Exp{\normstar{\sum_{t=0}^k \left( \prod_{j=t+1}^k (1-\alpha_j)\right) (1-\alpha_t)  W_{t+1}}} \\
    && + \Exp{\normstar{\sum_{t=0}^k \left(\prod_{j=t+1}^k (1-\alpha_j) \right) \alpha_t V_{t+1}}} \\
    & \overset{(b)}{\leq}& \normstar{e_0}  \prod_{t=0}^k(1-\alpha_t) + \sqrt{\Exp{\normstar{\sum_{t=0}^k \left( \prod_{j=t+1}^k (1-\alpha_j)\right) (1-\alpha_t)  W_{t+1}}^2}} \\
    && + \sqrt{ \Exp{\normstar{\sum_{t=0}^k \left(\prod_{j=t+1}^k (1-\alpha_j) \right) \alpha_t V_{t+1}}^2} } \\
    &\overset{(c)}{\leq}& \normstar{e_0}  \prod_{t=0}^k(1-\alpha_t) + {\color{red}\bar{\rho}}\underbrace{\sqrt{\Exp{\normeu{\sum_{t=0}^k \left( \prod_{j=t+1}^k (1-\alpha_j)\right) (1-\alpha_t)  W_{t+1}}^2}}}_{\eqdef \circledOne} \\
    && + {\color{red}\bar{\rho}}\underbrace{\sqrt{ \Exp{\normeu{\sum_{t=0}^k \left(\prod_{j=t+1}^k (1-\alpha_j) \right) \alpha_t V_{t+1}}^2} }}_{\eqdef \circledTwo },
\end{eqnarray*}
where $(a)$ follows from the triangle inequality, $(b)$ results from \text{Jensen's inequality}, and $(c)$ obtains from \eqref{eqn:norm_diff}.

To bound  $\Exp{\normstar{e_{k+1} } }$, we must bound $\circledOne$ ~and $\circledTwo$. First, we bound $\circledTwo$. By~\Cref{assum:Hessian_boundedvariance_v2}, i.e. $\ExpSub{\xi}{\nabla f_\xi(x)}= \nabla f(x)$ and $\ExpSub{\xi}{\normeu{\nabla f_\xi(x)-\nabla f(x)}^2} \leq \sigma_g^2$, we can prove that 
$\ExpSub{\xi_k}{V_k}=0$, and that 
$   
\ExpSub{\xi_j}{\inp{ V_j}{  V_i}}  = \inp{\ExpSub{\xi_j}{ V_j}}{  V_i} = 0$  for $i,j \in \mathbb{N}$ and $i < j$.
Therefore, 
\begin{eqnarray*}
    \circledTwo & = & \sum_{t=0}^k \left(\prod_{j=t+1}^k (1-\alpha_j)^2 \right) \alpha_t^2 \Exp{\normeu{V_t}^2} \leq \sigma_g^2 \sum_{t=0}^k \left(\prod_{j=t+1}^k (1-\alpha_j)^2 \right) \alpha_t^2.
\end{eqnarray*}
Plugging this result into the main inequality yields
\begin{eqnarray*}
 \Exp{\normstar{e_{k+1} } } & \leq & \normstar{e_0}  \prod_{t=0}^k(1-\alpha_t) + {\color{red}\bar{\rho}}\underbrace{\sqrt{\Exp{\normeu{\sum_{t=0}^k \left( \prod_{j=t+1}^k (1-\alpha_j)\right) (1-\alpha_t)  W_{t+1}}^2}}}_{\eqdef \circledOne} \\
 && + {\color{red}\bar{\rho}} \sigma_g \sqrt{\sum_{t=0}^k \left(\prod_{j=t+1}^k (1-\alpha_j)^2 \right) \alpha_t^2}. 
\end{eqnarray*}

Second, we bound $\circledOne$. By ~\Cref{assum:Hessian_boundedvariance_v2}, i.e. $\ExpSub{\xi}{\nabla^2 f_{\xi}(x)}=\nabla^2 f(x)$, we can prove that
\begin{eqnarray*}
    \ExpSub{\xi_k,b_k}{W_k} 
    &=& \ExpSub{\xi_k,b_k}{ \nabla^2 f_{\xi_k}(b_k x_k + (1-b_k)x_{k-1})(x_k-x_{k-1})  - (\nabla f(x_k) - \nabla f(x_{k-1}))} \\
    & = &  \ExpSub{\xi_k}{ \int_{b=0}^1 \nabla^2 f_{\xi_k}(b x_k + (1-b)x_{k-1})(x_k-x_{k-1})  - (\nabla f(x_k) - \nabla f(x_{k-1}))} \\
    &=& \int_{b=0}^1 \nabla^2 f(b x_k + (1-b)x_{k-1})(x_k-x_{k-1}) db - (\nabla f(x_k) - \nabla f(x_{k-1}))   \\
    &=& 0,
\end{eqnarray*}
and that $ \ExpSub{\xi_j,b_j}{\inp{ W_j}{  W_i}}  = \inp{\ExpSub{\xi_j,b_j}{ W_j}}{  W_i} = 0$ for $i,j\in\mathbb{N}$ and $i<j$. Thus, 
\begin{eqnarray*}
 \Exp{\normstar{e_{k+1} } } & \leq & \normstar{e_0}  \prod_{t=0}^k(1-\alpha_t) + {\color{red}\bar{\rho}}\sqrt{\sum_{t=0}^k \left( \prod_{j=t+1}^k (1-\alpha_j)^2\right) (1-\alpha_t)^2  \Exp{\normeu{ W_{t+1}}^2}} \\
&& + {\color{red}\bar{\rho}} \sigma_g  \sqrt{\sum_{t=0}^k \left(\prod_{j=t+1}^k (1-\alpha_j)^2 \right) \alpha_t^2}. 
\end{eqnarray*}

\paragraph{Step 3) Bounding $\Exp{\normeu{ W_{k+1}}^2}$.} 
Next, we bound $\Exp{\normeu{ W_{k+1}}^2}$. 
By the fact that $W_{k+1} =  \nabla^2 f_{\xi_{k+1}}(\hat x_{k+1}) (x_{k+1}-x_{k}) - (\nabla f(x_{k+1}) - \nabla f(x_{k}))$, and that $\normeu{x+y+z}^2 \leq 3 \normeu{x}^2+3\normeu{y}^2+3\normeu{z}^2$ for $x,y,z\in\R^d$,
\begin{eqnarray*}
    \Exp{\normeu{ W_{k+1}}^2} 
    & \leq & 3 \Exp{\normeu{\nabla f(x_{k+1})-\nabla f(x_{k})}^2} + 3 \Exp{\normeu{\nabla^2 f(\hat x_{k+1})(x_{k+1}-x_{k})}^2} \\
    && + 3\Exp{ \normeu{(\nabla^2 f_{\xi_{k+1}}(\hat x_{k+1}) - \nabla^2 f(\hat x_{k+1}))(x_{k+1}-x_{k})}^2 }.
\end{eqnarray*}

Next, by \Cref{assum:Hessian_boundedvariance_v2}, 
\begin{eqnarray*}
    \Exp{\normeu{ W_{k+1}}^2} 
    & \leq & 3 \Exp{\normeu{\nabla f(x_{k+1})-\nabla f(x_{k})}^2} + 3 \Exp{\normeu{\nabla^2 f(\hat x_{k+1})(x_{k+1}-x_{k})}^2} \\
    && + 3 \sigma_H^2 \Exp{ \normeu{x_{k+1}-x_{k}}^2 } \\
    & \overset{(a)}{\leq} & \frac{3}{{\color{red} \ubar \rho }^2} \Exp{\normstar{\nabla f(x_{k+1})-\nabla f(x_{k})}^2} +  \frac{3}{{\color{red} \ubar \rho }^2} \Exp{\normstar{\nabla^2 f(\hat x_{k+1})(x_{k+1}-x_{k})}^2} \\
    && + \frac{3 \sigma_H^2}{{\color{blue}\ubar\theta}^2} \Exp{ \norm{x_{k+1}-x_{k}}^2 }\\
    & \leq & \frac{3}{{\color{red} \ubar \rho }^2} \Exp{\normstar{\nabla f(x_{k+1})-\nabla f(x_{k})}^2} +  \frac{3}{{\color{red} \ubar \rho}^2} \Exp{ \norm{\nabla^2 f(\hat x_{k+1})}_{\text{op}}^2 \norm{x_{k+1}-x_{k}}^2} \\
    && + \frac{3 \sigma_H^2}{{\color{blue}\ubar\theta}^2} \Exp{ \norm{x_{k+1}-x_{k}}^2 }
\end{eqnarray*}
where in $(a)$ we used \eqref{eqn:norm_diff}.

Next, applying \Cref{lemma:Relaxed_Lipschitz_Grad}, we obtain
\begin{eqnarray*}
    \Exp{\normeu{ W_{k+1}}^2} 
    & \leq & \frac{3}{{\color{red} \ubar \rho }^2} \Exp{ (L_0 + L_1 \normstar{\nabla f(x_{k})})^2 \exp(2 L_1 \norm{x_{k+1}-x_{k}}) \norm{x_{k+1} -x_{k}}^2}  \\
    && +  \frac{3}{{\color{red} \ubar \rho }^2} \Exp{ (L_0 + L_1 \normstar{\nabla f(\hat x_{k+1})})^2\norm{x_{k+1}-x_{k}}^2}  + \frac{3 \sigma_H^2}{{\color{blue}\ubar\theta}^2} \Exp{ \norm{x_{k+1}-x_{k}}^2 }. 
\end{eqnarray*}

By the fact that $\norm{x_{k+1}-x_{k}} \leq \eta_{k}$, 
\begin{eqnarray*}
\Exp{\normeu{ W_{k+1}}^2} 
& \leq & \frac{3}{{\color{red} \ubar \rho }^2}  (L_0 + L_1 \normstar{\nabla f(x_{k})})^2 \exp(2 L_1 \eta_{k}) \eta_{k}^2  \\
&& +  \frac{3}{{\color{red} \ubar \rho}^2} \eta_{k}^2 \Exp{ (L_0 + L_1 \normstar{\nabla f(\hat x_{k+1})})^2}  + \frac{3 \sigma_H^2}{{\color{blue}\ubar\theta}^2} \eta_{k}^2. 
\end{eqnarray*}

Next, since 
\begin{eqnarray*}
    (L_0+L_1\normstar{\nabla f(\hat x_{k+1})})^2 
    &\leq & (L_0 + L_1\normstar{\nabla f(x_{k})} + L_1\normstar{\nabla f(\hat x_{k+1}) - \nabla f(x_{k})})^2 \\
    &\leq & 2 (L_0 + L_1 \normstar{\nabla f(x_{k})})^2 + 2 L_1^2 \normstar{\nabla f(\hat x_{k+1}) - \nabla f(x_{k})}^2 \\
    &\leq &  2 (L_0 + L_1 \normstar{\nabla f(x_{k})})^2  \\
    &&+ 2 L_1^2 (L_0 + L_1\normstar{\nabla f(x_{k})})^2 \exp(2L_1 \norm{\hat x_{k+1} - x_{k}}) \norm{\hat x_{k+1}-x_{k}}^2,
\end{eqnarray*}
we obtain
\begin{eqnarray*}
    \Exp{\normeu{ W_{k+1}}^2}  &\leq&  \frac{3}{{\color{red} \ubar \rho }^2}  (L_0 + L_1 \normstar{\nabla f(x_{k})})^2 (\exp(2 L_1 \eta_{k}) + 2)\eta_{k}^2  + \frac{3 \sigma_H^2}{{\color{blue}\ubar\theta}^2} \eta_{k}^2 \\
    && +  \frac{6}{{\color{red} \ubar \rho}^2} L_1^2\eta_{k}^2  (L_0 + L_1 \normstar{\nabla f(x_{k})})^2 \Exp{ \exp(2L_1 \norm{\hat x_{k+1} - x_{k}})\norm{\hat x_{k+1} -x_{k}}^2}   \\
    & \overset{(a)}{\leq} & \frac{3}{{\color{red} \ubar \rho}^2}  (L_0 + L_1 \normstar{\nabla f(x_{k})})^2 (\exp(2 L_1 \eta_{k}) + 2)\eta_{k}^2 + \frac{3 \sigma_H^2}{{\color{blue}\ubar\theta}^2} \eta_{k}^2 \\
    && +  \frac{6}{{\color{red} \ubar \rho}^2} L_1^2\eta_{k}^2  (L_0 + L_1 \norm{\nabla f(x_{k})})^2 \Exp{ \exp(2L_1 b_k \norm{x_{k+1} - x_{k}})b_k^2\norm{x_{k+1} -x_{k}}^2} ,
\end{eqnarray*}
where $(a)$ results from the definition of $\hat x_{k+1}$. 
Next, by the fact that $\norm{x_{k+1}-x_{k}} \leq \eta_{k}$,
\begin{eqnarray*}
    \Exp{\normeu{ W_{k+1}}^2} &\leq& \frac{3}{{\color{red} \ubar \rho}^2}  (L_0 + L_1 \normstar{\nabla f(x_{k})})^2 (\exp(2 L_1 \eta_{k}) + 2)\eta_{k}^2  \\
    && +  \frac{6}{{\color{red} \ubar \rho}^2} L_1^2\eta_{k}^2  (L_0 + L_1 \norm{\nabla f(x_{k})})^2 \Exp{ \exp(2L_1 b_{k+1} \eta_{k})b_{k+1}^2\eta_{k}^2}  + \frac{3 \sigma_H^2}{{\color{blue}\ubar\theta}^2} \eta_{k}^2 \\
    & = &  \frac{3}{{\color{red} \ubar \rho}^2}  (L_0 + L_1 \normstar{\nabla f(x_{k})})^2 (\exp(2 L_1 \eta_{k}) + 2)\eta_{k}^2  \\
    && +  \frac{6}{{\color{red} \ubar \rho}^2} L_1^2\eta_{k}^2  (L_0 + L_1 \norm{\nabla f(x_{k})})^2  \eta_{k}^2 \int_{0}^1 \exp(2L_1 b \eta_{k})b^2 db  \\
    && + \frac{3 \sigma_H^2}{{\color{blue}\ubar\theta}^2} \eta_{k}^2. 
\end{eqnarray*}

Since 
\begin{eqnarray*}
    \int \exp(a_k z) z^2 dz = \frac{1}{a_k} z^2 \exp(a_k z) - \frac{2}{a_k}\int z \exp(a_k z) dz,
\end{eqnarray*}
we obtain $\int_{z=0}^1 \exp(a_k z) z^2 dz \leq \frac{1}{a_k} \exp(a_k)$. 
Therefore,
\begin{eqnarray*}
    \Exp{\normeu{ W_{k+1}}^2}  
& \leq  & \frac{3}{{\color{red} \ubar \rho}^2}  (L_0 + L_1 \normstar{\nabla f(x_{k})})^2 (\exp(2 L_1 \eta_{k}) + 2)\eta_{k}^2  \\
&& +  \frac{6}{{\color{red} \ubar \rho}^2} L_1^2\eta_{k}^2  (L_0 + L_1 \norm{\nabla f(x_{k})})^2  \eta_{k}^2 \frac{1}{2L_1 \eta_{k}} \exp(2L_1 \eta_{k})  + \frac{3 \sigma_H^2}{{\color{blue}\ubar\theta}^2} \eta_{k}^2.
\end{eqnarray*}

By re-arranging the terms, and by the fact that $(a+b)^2 \leq 2 a^2 + 2b^2$ for $a,b\in\R$,
\begin{eqnarray*}
    \Exp{\normeu{ W_{k+1}}^2}  
    & \leq &  \frac{3}{{\color{red} \ubar \rho}^2}  (L_0 + L_1 \normstar{\nabla f(x_{k})})^2 \left(\exp(2 L_1 \eta_{k}) + \eta_{k}L_1\exp(2 L_1 \eta_{k}) + 2\right)\eta_{k}^2  \\
    && + \frac{3 \sigma_H^2}{{\color{blue}\ubar\theta}^2} \eta_{k}^2 \\
    & \leq & \frac{6}{{\color{red} \ubar \rho}^2}  L_1^2 \normstar{\nabla f(x_{k})}^2 \left(\exp(2 L_1 \eta_{k}) + \eta_{k}L_1\exp(2 L_1 \eta_{k}) + 2\right)\eta_{k}^2   \\
    && +  \frac{6}{{\color{red} \ubar \rho}^2}  L_0^2 \left((\exp(2 L_1 \eta_{k}) + \eta_{k}L_1\exp(2 L_1 \eta_{k}) + 2\right)\eta_{k}^2 + \frac{3 \sigma_H^2}{{\color{blue}\ubar\theta}^2} \eta_{k}^2.
\end{eqnarray*}

\paragraph{Step 4) Plugging $\Exp{\normeu{W_k}^2}$ back into the upper-bound for $\Exp{\normstar{e_{k+1}}}$.}
By plugging $\Exp{\normeu{W_k}^2}$ back into the upper-bound for $\Exp{\normstar{e_{k+1}}}$, 
\begin{eqnarray*}
 \Exp{\normstar{e_{k+1} } } & \leq &   \prod_{t=0}^k(1-\alpha_t) \Exp{\normstar{e_0}} + {\color{red}\bar{\rho}}\sqrt{\sum_{t=0}^k \left( \prod_{j=t+1}^k (1-\alpha_j)^2\right) (1-\alpha_t)^2  (A_{t+1}+ B_{t+1})} \\
&& + {\color{red}\bar{\rho}} \sigma_g  \sqrt{\sum_{t=0}^k \left(\prod_{j=t+1}^k (1-\alpha_j)^2 \right) \alpha_t^2},
\end{eqnarray*}
where $A_{k+1} = \frac{6}{{\color{red} \ubar \rho}^2}  L_1^2  (\exp(2 L_1 \eta_{k}) + \eta_{k}L_1\exp(2 L_1 \eta_{k}) + 2)\eta_{k}^2 \Exp{\normstar{\nabla f(x_{k})}^2}$ and $B_{k+1} = \frac{6}{{\color{red} \ubar \rho}^2}  L_0^2 (\exp(2 L_1 \eta_{k}) + \eta_{k}L_1\exp(2 L_1 \eta_{k}) + 2)\eta_{k}^2 + \frac{3 \sigma_H^2}{{\color{blue}\ubar\theta}^2} \eta_{k}^2$.

\paragraph{Step 5) Deriving the convergence bound under constant tuning parameters.}

If $\eta_k=\eta$ and $\alpha_k=\alpha$, then 
\begin{eqnarray*}
 \Exp{\normstar{e_{k+1} } } 
& \leq &  (1-\alpha)^{k+1}\Exp{\normstar{e_0}}  + {\color{red}\bar\rho}\sqrt{ \sum_{t=0}^{k} (1-\alpha)^{2(k-t+1)} (A_{t+1} + B )}  \\
&& + {\color{red}\bar\rho} \sigma_g \sqrt{\sum_{t=0}^{k} (1-\alpha)^{2(k-t)} \alpha^2} \\
& \leq &   (1-\alpha)^{k+1}\Exp{\normstar{e_0}}   + {\color{red}\bar\rho}\sqrt{ \sum_{t=0}^{k} (1-\alpha)^{2(k-t+1)} A_{t+1}} \\
&&+  {\color{red}\bar\rho}\sqrt{ \sum_{t=0}^{k} (1-\alpha)^{2(k-t+1)} B}  + {\color{red}\bar\rho} \sigma_g \sqrt{\sum_{t=0}^{k} (1-\alpha)^{2(k-t)} \alpha^2},  
\end{eqnarray*}
where $A_{k+1} = c\eta^2 \normstar{\nabla f(x_{k})}^2$, $B = \frac{6}{{\color{red} \ubar \rho}^2}  L_0^2 (\exp(2 L_1 \eta) +  L_1 \eta\exp(2 L_1 \eta) + 2)\eta^2 + \frac{3 \sigma_H^2}{{\color{blue}\ubar\theta}^2} \eta^2$, and $c= \frac{6}{{\color{red} \ubar \rho}^2}  L_1^2  (\exp(2 L_1 \eta) + L_1\eta \exp(2 L_1 \eta) + 2)$.

Next, since 
\begin{eqnarray*}
    \sum_{t=0}^{k-1} (1-\alpha)^{2(k-t+1)}
    & \leq  & \sum_{j=0}^\infty ((1-\alpha)^2)^j = \frac{1}{1-(1-\alpha)^2}= \frac{1}{\alpha(2-\alpha)}  \overset{\alpha \in [0,1]}{\leq}  \frac{1}{\alpha}, \\
    &\text{and}&\\
     \sum_{t=0}^{k-1} (1-\alpha)^{2(k-t)} \alpha^2  & \leq &  \alpha^2 \sum_{j=0}^\infty ((1-\alpha)^2)^j  = \frac{\alpha^2}{1-(1-\alpha)^2} = \frac{\alpha}{2-\alpha} \overset{\alpha \in [0,1]}{\leq} \alpha,
\end{eqnarray*}
we obtain
\begin{eqnarray*}
    \Exp{\normstar{e_{k+1} } } 
    &\overset{(a)}{\leq} &  (1-\alpha)^{k+1}\normstar{e_0}   + {\color{red}\bar\rho}\sqrt{ \sum_{t=0}^{k} (1-\alpha)^{2(k-t)} (1-\alpha)^2 A_{t+1}} +  {\color{red}\bar\rho} \frac{\eta}{\sqrt{\alpha}} \hat B \\
    && + {\color{red}\bar\rho} \sqrt{\alpha} \sigma_g \\
    & \overset{(b)}{\leq} &  (1-\alpha)^{k+1}\normstar{e_0}   + {\color{red}\bar\rho}\sqrt{ \sum_{t=0}^{k} (1-\alpha)^{2(k-t+1)}  c \eta^2 \normstar{\nabla f(x_{t})}^2} +  {\color{red}\bar\rho} \frac{\eta}{\sqrt{\alpha}} \hat B \\
    && + {\color{red}\bar\rho} \sqrt{\alpha} \sigma_g \\
    & \leq &  (1-\alpha)^{k}\normstar{e_0}   + {\color{red}\bar\rho}\hat{c} \eta \sum_{t=0}^{k} (1-\alpha)^{(k-t+1)}  \normstar{\nabla f(x_{t})} +  {\color{red}\bar\rho} \frac{\eta}{\sqrt{\alpha}} \hat B \\
    && + {\color{red}\bar\rho} \sqrt{\alpha} \sigma_g,
\end{eqnarray*}
where we reach $(a)$ by denoting $\hat{c} = \frac{3}{{\color{red} \ubar \rho}}  L_1 (\exp( L_1 \eta) + \sqrt{L_1 \eta} \exp( L_1 \eta) + 2) $ and  $\hat B = \frac{3}{{\color{red} \ubar \rho}}  L_0 (\exp( L_1 \eta) + \sqrt{L_1 \eta} \exp( L_1 \eta) + 2) + \frac{2\sigma_H}{{\color{blue}\ubar\theta}}$, and $(b)$ by using the condition that $\alpha \in [0,1]$  and the definition of $ A_{t+1}$.

Plugging the above result into the main descent inequality with $\eta_k=\eta$ and $\alpha_k = \alpha$ and denoting $\varphi = \left( 1 - \exp(L_1 \eta) \frac{L_1\eta}{2} \right)$, we obtain
\begin{eqnarray*}
    \sum_{k=0}^K \eta\varphi  \Exp{ \normstar{\nabla f(x_k)} }   
    & \leq&  \Delta + 2 \sum_{k=0}^K \eta \Exp{\normstar{e_k}}  + \frac{L_0}{2} \sum_{k=0}^K \exp(L_1 \eta) \eta^2 \\
    & \leq & \Delta + 2 \eta \sum_{k=0}^K (1-\alpha)^{k}\Exp{\normstar{e_0}} + \frac{L_0}{2}  \exp(L_1 \eta) \eta^2 (K+1) \\
    && + 2 {\color{red}\bar\rho}\hat{c} \eta^2 \sum_{k=0}^K \sum_{t=0}^{k-1} (1-\alpha)^{(k-t)}  \Exp{ \normstar{\nabla f(x_{t})} } \\
    && + 2 {\color{red}\bar\rho} \frac{\eta^2}{\sqrt{\alpha}} \hat B (K+1) + 2  {\color{red}\bar\rho}  \eta \sqrt{\alpha} \sigma_g (K+1)   .
\end{eqnarray*}

By the fact that $\sum_{k=0}^K (1-\alpha)^k \leq \sum_{k=0}^\infty (1-\alpha)^k = \frac{1}{\alpha}$, 
\begin{eqnarray*}
    \sum_{k=0}^K \eta\varphi  \Exp{ \normstar{\nabla f(x_k)} } &\leq& \Delta + \frac{2 \eta \Exp{\normstar{e_0}}}{\alpha}   + 2 {\color{red}\bar\rho}\hat{c} \eta^2 \sum_{k=0}^K \sum_{t=0}^{k-1} (1-\alpha)^{(k-t)}  \Exp{ \normstar{\nabla f(x_{t})} }  \\
    && + 2 {\color{red}\bar\rho} \frac{\eta^2}{\sqrt{\alpha}} \hat B (K+1) + 2  {\color{red}\bar\rho}  \eta \sqrt{\alpha} \sigma_g (K+1)    + \frac{L_0}{2}  \exp(L_1 \eta) \eta^2 (K+1).
\end{eqnarray*}

Next, since 
\begin{eqnarray*}
    \sum_{k=0}^K \sum_{t=0}^{k-1} (1-\alpha)^{k-t} \Exp{\normstar{\nabla f(x_{t})}} 
    & \leq & \sum_{k=0}^K \sum_{t=0}^{k} (1-\alpha)^{k-t} \Exp{\normstar{\nabla f(x_{t})}} \\
    & = & \sum_{t=0}^K (\sum_{k=t}^K (1-\alpha)^{k-t}) \Exp{\normstar{\nabla f(x_{t})}} \\
    & \leq & \sum_{t=0}^K (\sum_{k=0}^{\infty} (1-\alpha)^{k}) \Exp{\normstar{\nabla f(x_{t})}} \\
    & = & \frac{1}{\alpha} \sum_{k=0}^K \Exp{\normstar{\nabla f(x_{k})}},
\end{eqnarray*}
we obtain
\begin{eqnarray*}
    \sum_{k=0}^K \eta( \varphi - 2 {\color{red}\bar\rho}\hat{c} \frac{\eta}{\alpha}) \Exp{ \normstar{\nabla f(x_k)} } &\leq& \Delta + \frac{2 \eta \Exp{\normstar{e_0}}}{\alpha}     + 2 {\color{red}\bar\rho} \frac{\eta^2}{\sqrt{\alpha}} \hat B (K+1)\\
    && + 2  {\color{red}\bar\rho}  \eta \sqrt{\alpha} \sigma_g (K+1)    + \frac{L_0}{2}  \exp(L_1 \eta) \eta^2 (K+1).
\end{eqnarray*}

\paragraph{Step 6) Choosing tuning parameters.}

If $\eta \leq  \frac{\alpha}{80 L_1} \left(\frac{{\color{red}\bar\rho}}{{\color{red}\ubar\rho}}\right)^{-1}$, then 
\begin{eqnarray*}
    \frac{\eta}{2}\sum_{k=0}^K  \Exp{ \normstar{\nabla f(x_k)} } &\leq& \Delta + \frac{2 \eta \Exp{\normstar{e_0}}}{\alpha}     + 2 {\color{red}\bar\rho} \frac{\eta^2}{\sqrt{\alpha}} \hat B (K+1) \\
    && + 2  {\color{red}\bar\rho}  \eta \sqrt{\alpha} \sigma_g (K+1)    + \frac{L_0}{2} \exp(L_1 \eta) \eta^2 (K+1).
\end{eqnarray*}
Therefore, 
\begin{eqnarray*}
\min_{k \in \{0,1,\ldots,K\}} \Exp{\normstar{\nabla f(x_k)}}
& \leq & \frac{1}{K+1}\sum_{k=0}^K \Exp{\normstar{\nabla f(x_k)}}  \\
& \leq & \frac{2 \Delta}{\eta (K+1)} +  \frac{4 \Exp{\normstar{e_0}}}{\alpha (K+1)}  + 4 {\color{red}\bar\rho} \frac{\eta}{\sqrt{\alpha}} \hat B \\
&& + 4 {\color{red}\bar\rho}   \sqrt{\alpha} \sigma_g     + L_0 \exp(L_1 \eta) \eta.
\end{eqnarray*}

If $\eta = \frac{\hat \eta}{(K+1)^{\nicefrac{2}{3}}}$ with $\hat \eta =\frac{1}{80 L_1} \left(\frac{{\color{red}\bar\rho}}{{\color{red}\ubar\rho}}\right)^{-1}$, and $\alpha = \frac{1}{(K+1)^{\nicefrac{2}{3}}}$, then 
\begin{eqnarray*}
\min_{k \in \{0,1,\ldots,K\}} \Exp{\normstar{\nabla f(x_k)}}
& \leq & \frac{2 \Delta}{\hat \eta (K+1)^{\nicefrac{1}{3}}} +  \frac{4 \Exp{\normstar{e_0}}}{(K+1)^{\nicefrac{1}{3}}}  +  \frac{4 {\color{red}\bar\rho} \hat\eta \hat B}{(K+1)^{1/3}}  \\
&& + 4 {\color{red}\bar\rho}   \frac{1}{(K+1)^{\nicefrac{1}{3}}} \sigma_g     + L_0 \exp(L_1 \hat\eta) \frac{\hat \eta}{(K+1)^{\nicefrac{2}{3}}}.
\end{eqnarray*}

\newpage

\section{Proof of~\Cref{thm:V2_relaxed_smoothness}}

We prove the result in the following steps. 

\paragraph{Step 1) Proving the descent inequality.} 
By following the proof arguments in Step 1) of~\Cref{lemma:V1_relaxed_smoothness} (see \eqref{eq:descent_inequality}), we have 
\begin{eqnarray*}
 \sum_{k=0}^K \eta_k\varphi_k  \normstar{\nabla f(x_k)}    &\leq&  \Delta + 2 \sum_{k=0}^K \eta_k \normstar{\nabla f(x_k)-m_k}  + \frac{L_0}{2} \sum_{k=0}^K \exp(L_1 \eta_k) \eta_k^2,
\end{eqnarray*}
where $\varphi_k \eqdef \left( 1 - \exp(L_1 \eta_k) 
 \nicefrac{L_1\eta_k}{2} \right)$ and $\Delta \eqdef f(x_0) - f_{\inf}$.

\paragraph{Step 2) Bounding the error term.}
Next, we bound $\normstar{e_k}$, where $e_k \eqdef  m_k - \nabla f(x_k)$. 
From the definition of $e_k$,
\begin{eqnarray*}
    e_{k+1} 
    & = & m_{k+1} - \nabla f(x_{k+1}) \\
    & = & (1-\alpha_k)e_k + (1-\alpha_k)Z_f(x_k;x_{k+1})  + (1-\alpha_k) \varepsilon^H_{k+1}  + \alpha_k \varepsilon^g_{k+1} ,
\end{eqnarray*}
where $Z_f(x_k,x_{k+1}) = \nabla f(x_k) - \nabla f(x_{k+1}) + \nabla^2 f(x_{k+1})(x_{k+1}-x_k)$, $\varepsilon^H_{k+1} = (\nabla^2 f_{\xi_{k+1}}(x_{k+1})-\nabla^2 f(x_{k+1}))(x_{k+1}-x_k)$ and $\varepsilon^g_{k+1} = \nabla f_{\xi_{k+1}}(x_{k+1}) - \nabla f(x_{k+1})$. 

By recursively applying the above inequality, 
\begin{eqnarray*}
    e_{k+1} 
    & =& \prod_{t=0}^k (1-\alpha_t) e_0 +\sum_{t=0}^k \left( \prod_{j=t+1}^k (1-\alpha_j) \right) (1-\alpha_t)  Z_f(x_t,x_{t+1}) \\
    && +\sum_{t=0}^k \left( \prod_{j=t+1}^k (1-\alpha_j) \right) (1-\alpha_t)  \varepsilon^H_{t+1}+  \sum_{t=0}^k \left(\prod_{j=t+1}^k (1-\alpha_j) \right) \alpha_t \varepsilon^g_{t+1}.
\end{eqnarray*}
Therefore, 
\begin{eqnarray*}
    \Exp{\normstar{e_{k+1} }}
    & \leq & \Exp{\normstar{\prod_{t=0}^k (1-\alpha_t) e_0}} + \underbrace{\Exp{\normstar{\sum_{t=0}^k \left( \prod_{j=t+1}^k (1-\alpha_j) \right) (1-\alpha_t))  Z_f(x_t,x_{t+1})}}}_{\eqdef \circledThree} \\
    && + \underbrace{\Exp{\normstar{\sum_{t=0}^k \left( \prod_{j=t+1}^k (1-\alpha_j) \right) (1-\alpha_t))  \varepsilon^H_{t+1} }}}_{\eqdef \circledFour} +   \underbrace{\Exp{\normstar{\sum_{t=0}^k \left(\prod_{j=t+1}^k (1-\alpha_j) \right) \alpha_t \varepsilon^g_{t+1}}}}_{\eqdef \circledFive}.
\end{eqnarray*}

Next, we bound $\circledFive$ by using~\Cref{assum:Hessian_boundedvariance_v2} and by following the proof arguments for bounding $\circledOne$ in Step 2) of the convergence proof for~\Cref{lemma:V1_relaxed_smoothness}. Then, we have 
\begin{eqnarray*}
    \circledFive
    & \leq & {\color{red}\bar\rho}   \Exp{\normeu{\sum_{t=0}^k \left(\prod_{j=t+1}^k (1-\alpha_j) \right) \alpha_t \varepsilon^g_{t+1}}} \\
    & \leq & {\color{red}\bar\rho}  \sqrt{  \Exp{\normeu{\sum_{t=0}^k \left(\prod_{j=t+1}^k (1-\alpha_j) \right) \alpha_t \varepsilon^g_{t+1}}^2} } \\
    & \leq & {\color{red}\bar\rho} \sigma_g \sqrt{ \sum_{t=0}^k \left(\prod_{j=t+1}^k (1-\alpha_j)^2 \right) \alpha_t^2}, 
\end{eqnarray*}
Therefore, 
\begin{eqnarray*}
    \Exp{\normstar{e_{k+1} }}
    & \leq & \prod_{t=0}^k (1-\alpha_t)\Exp{\normstar{e_0}} + ~\circledThree~ +~\circledFour~ +   {\color{red}\bar\rho} \sigma_g \sqrt{ \sum_{t=0}^k \left(\prod_{j=t+1}^k (1-\alpha_j)^2 \right) \alpha_t^2}.
\end{eqnarray*}

Next, we bound $\circledFour$. 
\begin{eqnarray*}
    \circledFour  
    & \leq & {\color{red}\bar\rho} \Exp{\normeu{\sum_{t=0}^k \left( \prod_{j=t+1}^k (1-\alpha_j) \right) (1-\alpha_t))  \varepsilon^H_{t+1} }} \\
    & \leq & {\color{red}\bar\rho}  \sqrt{ \Exp{\normeu{\sum_{t=0}^k \left( \prod_{j=t+1}^k (1-\alpha_j) \right) (1-\alpha_t))  \varepsilon^H_{t+1} }^2} }.
\end{eqnarray*}
From the definition of $\varepsilon_{t+1}^H$ and from~\Cref{assum:Hessian_boundedvariance_v2}, 
\begin{eqnarray*}
    \circledFour
  & \leq & {\color{red}\bar\rho}  \sqrt{  \sum_{t=0}^k \left( \prod_{j=t+1}^k (1-\alpha_j)^2 \right) (1-\alpha_t)^2) \Exp{ \normeu{  \varepsilon^H_{t+1} }^2} } \\
   & \leq & {\color{red}\bar\rho}  \sqrt{  \sum_{t=0}^k \left( \prod_{j=t+1}^k (1-\alpha_j)^2 \right) (1-\alpha_t)^2) \sigma_H^2 \normeu{x_{t+1}-x_t}^2 } \\
   & \leq & \frac{{\color{red}\bar\rho}}{\color{blue} \ubar{\theta}}  \sqrt{  \sum_{t=0}^k \left( \prod_{j=t+1}^k (1-\alpha_j)^2 \right) (1-\alpha_t)^2) \sigma_H^2 \norm{x_{t+1}-x_t}^2 } \\
   & \leq & \frac{{\color{red}\bar\rho}}{\color{blue} \ubar{\theta}} \eta \sigma_H  \sqrt{  \sum_{t=0}^k \left( \prod_{j=t+1}^k (1-\alpha_j)^2 \right) (1-\alpha_t)^2  }.
\end{eqnarray*}
Therefore, 
\begin{eqnarray*}
    \Exp{\normstar{e_{k+1} }}
    & \leq & \prod_{t=0}^k (1-\alpha_t)\Exp{\normstar{ e_0}}  + \frac{{\color{red}\bar\rho}}{\color{blue} \ubar{\theta}} \eta \sigma_H  \sqrt{  \sum_{t=0}^k \left( \prod_{j=t+1}^k (1-\alpha_j)^2 \right) (1-\alpha_t)^2  } \\
    && + ~\circledThree~ +   {\color{red}\bar\rho} \sigma_g \sqrt{ \sum_{t=0}^k \left(\prod_{j=t+1}^k (1-\alpha_j)^2 \right) \alpha_t^2}.
\end{eqnarray*}

Next, we bound $\circledThree$: 
\begin{eqnarray*}
    \circledThree &\leq&  \sum_{t=0}^k \left( \prod_{j=t+1}^k (1-\alpha_j) \right) (1-\alpha_t)) \Exp{ \normstar{ Z_f(x_t,x_{t+1}) } }.
\end{eqnarray*}

From~\Cref{lemma:Relaxed_Grad_and_Hessian},
\begin{eqnarray*}
   \normstar{ Z_f(x_t,x_{t+1}) } & \leq &   \frac{1}{2}(M_0 + M_1 \normstar{\nabla f(x_{t+1})}) \norm{x_t-x_{t+1}}^2 \\
   && + \frac{1}{3} M_1 (L_0+L_1 \normstar{\nabla f(x_{t+1})}) \exp(L_1 \norm{x_t-x_{t+1}}) \norm{x_t-x_{t+1}}^3 \\
   & \leq &  \frac{1}{2}(M_0 + M_1 \normstar{\nabla f(x_{t+1})}) \eta_t^2 + \frac{1}{3} M_1 (L_0+L_1 \normstar{\nabla f(x_{t+1})}) \exp(L_1 \eta_t) \eta_t^3 \\
   & = & \left(\frac{M_0}{2} + \frac{M_1}{3}L_0 \eta_t\exp(L_1\eta_t)\right) \eta_t^2 + \left(\frac{M_1}{2} + \frac{M_1}{3} L_1 \eta_t\exp(L_1 \eta_t) \right) \eta_t^2 \normstar{\nabla f(x_{t+1})}.
\end{eqnarray*}
Therefore, 
\begin{eqnarray*}
\circledThree &\leq&  \sum_{t=0}^k \left( \prod_{j=t+1}^k (1-\alpha_j) \right) (1-\alpha_t)) \left(\frac{M_0}{2} + \frac{M_1}{3}L_0\eta_t \exp(L_1\eta_t)\right) \eta_t^2  \\
&& +  \sum_{t=0}^k \left( \prod_{j=t+1}^k (1-\alpha_j) \right) (1-\alpha_t))\left(\frac{M_1}{2} + \frac{M_1}{3}L_1 \eta_t\exp(L_1 \eta_t) \right) \eta_t^2 \normstar{\nabla f(x_{t+1})}.
\end{eqnarray*}
Plugging $\circledThree$ into the upper-bound for $\Exp{\normstar{e_{k+1} }}$ yields
\begin{eqnarray*}
    \Exp{\normstar{e_{k+1} }}
    & \leq & \prod_{t=0}^k (1-\alpha_t)\Exp{\normstar{ e_0}} \\
    &&+ \sum_{t=0}^k \left( \prod_{j=t+1}^k (1-\alpha_j) \right) (1-\alpha_t) \left(\frac{M_0}{2} + \frac{M_1}{3}L_0\eta_t \exp(L_1\eta_t)\right) \eta_t^2  \\
    && +  \sum_{t=0}^k \left( \prod_{j=t+1}^k (1-\alpha_j) \right) (1-\alpha_t)\left(\frac{M_1}{2} + \frac{M_1}{3} L_1 \eta_t\exp(L_1 \eta_t) \right) \eta_t^2 \normstar{\nabla f(x_{t+1})} \\
    && + \frac{{\color{red}\bar\rho}}{\color{blue} \ubar{\theta}} \eta \sigma_H  \sqrt{  \sum_{t=0}^k \left( \prod_{j=t+1}^k (1-\alpha_j)^2 \right) (1-\alpha_t)^2  } +   {\color{red}\bar\rho} \sigma_g \sqrt{ \sum_{t=0}^k \left(\prod_{j=t+1}^k (1-\alpha_j)^2 \right) \alpha_t^2}.
\end{eqnarray*}

If $\eta_k = \eta$ and $\alpha_k = \alpha$, then 
\begin{eqnarray*}
    \Exp{\normstar{e_{k+1} }}
    & \leq & (1-\alpha)^{k+1} \normstar{e_0} + \left(\frac{M_0}{2} + \frac{M_1L_0\eta \exp(L_1\eta)}{3}\right) \sum_{t=0}^k (1-\alpha)^{k-t+1} \eta^2   \\
    && + \left(\frac{M_1}{2} + \frac{M_1L_1 \exp(L_1 \eta) \eta}{3}\right) \sum_{t=0}^k (1-\alpha)^{k-t+1}  \eta^2 \normstar{\nabla f(x_{t+1})} \\
    && + \frac{{\color{red}\bar\rho}}{\color{blue} \ubar{\theta}} \eta \sigma_H  \sqrt{  \sum_{t=0}^k (1-\alpha)^{2(k-t+1)}  } +   {\color{red}\bar\rho} \sigma_g \sqrt{ \sum_{t=0}^k (1-\alpha)^{2(k-t)} \alpha^2}.
\end{eqnarray*}

Next, since 
\begin{eqnarray*}
    \sum_{t=0}^{k-1} (1-\alpha)^{2(k-t+1)}
    & \leq & \sum_{j=0}^\infty ((1-\alpha)^2)^j  = \frac{1}{\alpha(2-\alpha)}  \overset{\alpha \in [0,1]}{\leq} \frac{1}{\alpha}, \\
    &\text{and}&\\
     \sum_{t=0}^{k-1} (1-\alpha)^{2(k-t)} \alpha^2  & \leq &  \alpha^2 \sum_{j=0}^\infty ((1-\alpha)^2)^j  = \frac{\alpha^2}{1-(1-\alpha)^2} = \frac{\alpha}{2-\alpha} 
     \overset{\alpha \in [0,1]}{\leq} \alpha,
\end{eqnarray*}
we obtain
\begin{eqnarray*}
    \Exp{\normstar{e_{k+1} }}
    & \leq & (1-\alpha)^{k+1} \Exp{\normstar{e_0}} + \left(\frac{M_0}{2} + \frac{M_1}{3}L_0\eta \exp(L_1\eta)\right) \frac{\eta^2}{\alpha}   \\
    && + \left(\frac{M_1}{2} + \frac{M_1}{3}L_1  \eta\exp(L_1 \eta)\right) \sum_{t=0}^k (1-\alpha)^{k-t+1}  \eta^2 \normstar{\nabla f(x_{t+1})} \\
    && + \frac{{\color{red}\bar\rho}}{\color{blue} \ubar{\theta}} \frac{\eta}{\sqrt{\alpha}} \sigma_H  +   {\color{red}\bar\rho} \sqrt{\alpha}\sigma_g.
\end{eqnarray*}

Therefore, recalling $\varphi =\eta( 1 - \exp(L_1 \eta) 
 \nicefrac{L_1\eta}{2} ) $, we have 
\begin{eqnarray*}   
    \sum_{k=0}^K  \eta \varphi \Exp{\normstar{\nabla f(x_k)}}   &\leq&  \Delta + \frac{L_0}{2} \sum_{k=0}^K \exp(L_1 \eta) \eta^2 + 2  \eta \sum_{k=0}^K \Exp{\normstar{e_k}}   \\
    &\leq& \Delta  + \frac{L_0}{2}  \exp(L_1 \eta) \eta^2(K+1)  + 2  \eta \sum_{k=0}^K (1-\alpha)^{k} \Exp{\normstar{e_0}}\\
    &&+ \eta  \left(M_0 + \frac{2M_1}{3}L_0\eta \exp(L_1\eta)\right) \frac{\eta^2}{\alpha} (K+1) \\
    && + \eta \left(M_1 + \frac{2M_1}{3} L_1 \eta\exp(L_1 \eta) \right)  \sum_{k=0}^K \sum_{t=0}^{k-1} (1-\alpha)^{k-t}  \eta^2 \normstar{\nabla f(x_{t+1})} \\
    && + 2\eta\left(\frac{{\color{red}\bar\rho}}{\color{blue} \ubar{\theta}} \frac{\eta}{\sqrt{\alpha}} \sigma_H  +   {\color{red}\bar\rho} \sqrt{\alpha}\sigma_g\right)(K+1),
\end{eqnarray*}
where $\Delta = \Exp{f(x_0)-f_{\inf}}$.

Next, since 
\begin{eqnarray*}
    \sum_{k=0}^K (1-\alpha)^k  \leq \frac{1}{\alpha}, &\text{and} & \sum_{k=0}^K \sum_{t=0}^{k-1} (1-\alpha)^{k-t} \normstar{\nabla f(x_{t+1})}
    \leq \frac{1}{\alpha} \sum_{k=0}^K \normstar{\nabla f(x_k)},
\end{eqnarray*}
we obtain
\begin{eqnarray*}
    \sum_{k=0}^K \eta\vartheta  \Exp{\normstar{\nabla f(x_k)}}
    & \leq& \Delta  + (K+1)\frac{L_0}{2}  \exp(L_1 \eta) \eta^2  + 2  \frac{\eta}{\alpha}  \Exp{\normstar{e_0}}\\
    && + \eta (K+1) \left(M_0 + \frac{2M_1}{3}L_0 \eta\exp(L_1\eta)\right) \frac{\eta^2}{\alpha}  \\
    && + 2\eta\left(\frac{{\color{red}\bar\rho}}{\color{blue} \ubar{\theta}} \frac{\eta}{\sqrt{\alpha}} \sigma_H  +   {\color{red}\bar\rho} \sqrt{\alpha}\sigma_g\right)(K+1),
\end{eqnarray*}
where $\vartheta \eqdef \left( 1 - \exp(L_1 \eta) 
 \nicefrac{L_1\eta}{2}   -  \left(M_1 + \nicefrac{2M_1}{3} \cdot L_1 \eta \exp(L_1 \eta) \right) \frac{\eta^2}{\alpha}  \right)$. 
 
 If $\eta \leq \frac{\alpha}{3}\min\left\{ \frac{1}{L_1} , \frac{1}{\sqrt{M_1 } }  \right\} $, then $\eta \leq \min\left\{ \frac{1}{3L_1} ,  \frac{1}{3\sqrt{M_1}} \right\}$ and 
\begin{eqnarray*}
    \frac{\eta}{2}\sum_{k=0}^K   \Exp{\normstar{\nabla f(x_k)}}   
    &\leq& \Delta  + (K+1)\frac{L_0}{2}  \exp(L_1 \eta) \eta^2  + 2  \frac{\eta}{\alpha}  \Exp{\normstar{e_0}} \\
    &&+ \eta (K+1) \left(M_0 + \frac{2M_1}{3}L_0\eta \exp(L_1\eta)\right) \frac{\eta^2}{\alpha}  \\
    && + 2\eta\left(\frac{{\color{red}\bar\rho}}{\color{blue} \ubar{\theta}} \frac{\eta}{\sqrt{\alpha}} \sigma_H  +   {\color{red}\bar\rho} \sqrt{\alpha}\sigma_g\right)(K+1).
\end{eqnarray*}
Therefore, 
\begin{eqnarray*}
    \frac{1}{K+1}\sum_{k=0}^K   \Exp{\normstar{\nabla f(x_k)}} 
    & \leq & \frac{2\Delta}{\eta(K+1)}  +L_0 \exp(L_1 \eta) \eta  +  \frac{4\Exp{\normstar{e_0}}}{\alpha (K+1)}  \\
    && + 2 \left(M_0 + \frac{2M_1}{3}L_0\eta \exp(L_1\eta)\right) \frac{\eta^2}{\alpha}  \\
    && + 4\left(\frac{{\color{red}\bar\rho}}{\color{blue} \ubar{\theta}} \frac{\eta}{\sqrt{\alpha}} \sigma_H  +   {\color{red}\bar\rho} \sqrt{\alpha}\sigma_g\right).
\end{eqnarray*}

If $\eta = \hat \eta \alpha$ with $\hat \eta = \min\left\{ \frac{1}{5L_1} , \frac{1}{3\sqrt{M_1}}  \right\} $ and $\alpha = \frac{1}{(K+1)^{\nicefrac{2}{3}}}$, then 
\begin{eqnarray*}
    \frac{1}{K+1}\sum_{k=0}^K   \Exp{\normstar{\nabla f(x_k)}}   
    &\leq& \frac{2\Delta}{\hat\eta(K+1)^{\nicefrac{1}{3}}}  + \frac{\hat\eta L_0 \exp(L_1 \hat \eta)}{(K+1)^{\nicefrac{2}{3}}}  + \frac{4 \Exp{\normstar{e_0}}}{ (K+1)^{\nicefrac{1}{3}}} \\
    && + 2\left(M_0 + \frac{2M_1}{3} L_0\hat\eta \exp(L_1\hat \eta)\right) \frac{\hat \eta^2}{(K+1)^{\nicefrac{2}{3}}}  \\
    && + 4\frac{{\color{red}\bar\rho}}{\color{blue} \ubar{\theta}} \frac{\hat \eta \sigma_H }{(K+1)^{\nicefrac{1}{3}}}  +   4\frac{{\color{red}\bar\rho} \sigma_g}{(K+1)^{\nicefrac{1}{3}}}.
\end{eqnarray*}

Finally, by the fact that $\min_{k \in \{0,1,\ldots,K\}} \Exp{\normstar{\nabla f(x_k)}}
\leq \frac{1}{K+1}\sum_{k=0}^K \Exp{\normstar{\nabla f(x_k)}}$, we obtain the final result.

\newpage
\section{LMO-based Methods with Extrapolated Momentum}\label{app:IGT_relaxed_smoothness}

In this section, we present the convergence of LMO-based methods in~\eqref{eqn:LMO_momentum} that leverage extrapolated momentum in~\eqref{eqn:extrapolated_m}. 

\begin{theorem}
    \label{thm:IGT_relaxed_smoothness}
    Consider the problem of minimizing $f(x)=\ExpSub{\xi \sim \cD}{f_{\xi}(x)}$. Let $f$ be twice differentiable, and let   Assumptions~\ref{assum:Hessian_boundedvariance_v2},~\ref{assum:lower_bound},~\ref{assum:Relaxed_Lipschitz_Grad}, and~\ref{assum:Relaxed_Lipschitz_Hessian} hold. 
    Then, the iterates $\{x_k\}$ generated by LMO-based methods in~\eqref{eqn:LMO_momentum} that leverage extrapolated momentum in~\eqref{eqn:extrapolated_m}  
    \begin{eqnarray*}
        \alpha_k =  \alpha = \frac{1}{(K+1)^{\nicefrac{4}{7}}}, \quad \text{and} \quad  \eta_k= \eta = \frac{\hat \eta}{(K+1)^{\nicefrac{5}{7}}},
    \end{eqnarray*}
    where $\hat \eta = \min\left\{\frac{1}{3L_1}, \frac{1}{3\sqrt{M_1}}\right\}$  satisfy  
    \begin{eqnarray*}
        \min_{k \in \{0,1,\ldots,K\}} \Exp{\normstar{\nabla f(x_k)}} 
        &\leq&  \frac{2(f(x_0)-f_{\inf})}{\hat\eta(K+1)^{\nicefrac{2}{7}}}  + \frac{\hat\eta L_0 \exp(L_1 \hat \eta)}{(K+1)^{\nicefrac{5}{7}}}  + \frac{4 \left(\Exp{\normstar{e_0}}+ {\color{red}\bar\rho} \sigma_g\right)}{ (K+1)^{\nicefrac{2}{7}}} \\
        && + 2\left(M_0 + \frac{2M_1}{3} L_0 \exp(L_1\hat \eta)\right) \frac{\hat \eta^2}{(K+1)^{\nicefrac{2}{7}}}.
    \end{eqnarray*}

\end{theorem}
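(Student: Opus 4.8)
The plan is to transcribe the proof of~\Cref{thm:V2_relaxed_smoothness} almost verbatim, exploiting the extrapolation point $y_{k+1}=x_{k+1}+\tfrac{1-\alpha_k}{\alpha_k}(x_{k+1}-x_k)$ to produce a favorable error recursion, and then rebalancing the step sizes. First, the descent part needs no change: since $x_{k+1}=x_k+\mathrm{lmo}(m_k)$ with $\norm{\mathrm{lmo}(m_k)}\le\eta_k$, Step 1) of~\Cref{lemma:V1_relaxed_smoothness} applies word for word (it never uses the form of the momentum update) and yields~\eqref{eq:descent_inequality}, i.e.\ $\sum_{k=0}^K\eta_k\varphi_k\normstar{\nabla f(x_k)}\le\Delta+2\sum_{k=0}^K\eta_k\normstar{\nabla f(x_k)-m_k}+\tfrac{L_0}{2}\sum_{k=0}^K\exp(L_1\eta_k)\eta_k^2$, where $\varphi_k=1-\exp(L_1\eta_k)\tfrac{L_1\eta_k}{2}\ge\tfrac12$ whenever $\eta_k\le\hat\eta\le\tfrac1{3L_1}$ and $\Delta=f(x_0)-f_{\inf}$. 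Everything then reduces to controlling $\eta\sum_k\Exp{\normstar{e_k}}$ with $e_k:=m_k-\nabla f(x_k)$.

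Second, the key step is the error recursion. Inserting $\pm\nabla f(x_k)$ and $\pm\nabla f(y_{k+1})$ into~\eqref{eqn:extrapolated_m} gives $e_{k+1}=(1-\alpha_k)e_k+(1-\alpha_k)(\nabla f(x_k)-\nabla f(x_{k+1}))+\alpha_k(\nabla f(y_{k+1})-\nabla f(x_{k+1}))+\alpha_k\varepsilon^g_{k+1}$, where $\varepsilon^g_{k+1}:=\nabla f_{\xi_{k+1}}(y_{k+1})-\nabla f(y_{k+1})$. Writing $Z_f(u,v):=\nabla f(u)-\nabla f(v)-\nabla^2 f(v)(u-v)$, the first-order Hessian contributions cancel because $y_{k+1}-x_{k+1}=\tfrac{1-\alpha_k}{\alpha_k}(x_{k+1}-x_k)$, so
\begin{eqnarray*}
e_{k+1}=(1-\alpha_k)e_k+(1-\alpha_k)Z_f(x_k,x_{k+1})+\alpha_k Z_f(y_{k+1},x_{k+1})+\alpha_k\varepsilon^g_{k+1}.
\end{eqnarray*}
This is exactly the recursion of~\Cref{thm:V2_relaxed_smoothness} with the stochastic-Hessian noise $\varepsilon^H$ deleted and an extra deterministic remainder $\alpha_k Z_f(y_{k+1},x_{k+1})$ added. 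I would unroll it with constant $\alpha_k\equiv\alpha$, take $\normstar{\cdot}$ and $\Exp{\cdot}$, and bound: (i) the noise sum by $\bar\rho\sigma_g\sqrt\alpha$, via $\Exp{\varepsilon^g_{k+1}\mid\mathcal{F}_k}=0$ ($y_{k+1}$ is $\mathcal{F}_k$-measurable), Jensen, orthogonality, and~\eqref{eqn:norm_diff}; (ii) the two $Z_f$-sums via~\Cref{lemma:Relaxed_Grad_and_Hessian} with $\norm{x_{k+1}-x_k}\le\eta$ for the first and $\norm{y_{k+1}-x_{k+1}}\le\eta/\alpha$ for the second; (iii) the geometric sums $\sum_t(1-\alpha)^{k-t}\le1/\alpha$. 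This produces a bound on $\Exp{\normstar{e_{k+1}}}$ whose main pieces are $(1-\alpha)^{k+1}\Exp{\normstar{e_0}}$, $\bar\rho\sigma_g\sqrt\alpha$, a term of order $(M_0+\tfrac{2M_1}{3}L_0\exp(L_1\eta/\alpha))(\eta/\alpha)^2$ from $Z_f(y_{k+1},x_{k+1})$ (plus a smaller term of order $\tfrac{\eta^2}{\alpha}$ from $Z_f(x_k,x_{k+1})$), and a feedback term of order $(\eta/\alpha)^2M_1$ times a weighted sum of $\normstar{\nabla f(x_{t+1})}$.

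Third, plug this into~\eqref{eq:descent_inequality} with constant $\eta,\alpha$, swap the double sums ($\sum_k\sum_{t<k}(1-\alpha)^{k-t}\alpha\,a_{t+1}\le\sum_k a_k$), and move the $\normstar{\nabla f}$ feedback to the left-hand side; this absorption works once $(\eta/\alpha)^2M_1$ is small and $\exp(L_1\eta/\alpha)$ is bounded, both guaranteed by $\eta/\alpha\le\hat\eta=\min\{\tfrac1{3L_1},\tfrac1{3\sqrt{M_1}}\}$ (for the stated schedule $\sup_K\eta/\alpha=\hat\eta$). Dividing by $\eta(K+1)$ and using $\min_k\le\tfrac1{K+1}\sum_k$ leaves the terms $\tfrac{2\Delta}{\eta(K+1)}$, $\tfrac{2\Exp{\normstar{e_0}}}{\alpha(K+1)}$, $2\bar\rho\sigma_g\sqrt\alpha$, $\bigl(M_0+\tfrac{2M_1}{3}L_0\exp(L_1\hat\eta)\bigr)(\eta/\alpha)^2$, $\tfrac{L_0}{2}\exp(L_1\eta)\eta$, and faster-decaying terms of order $\tfrac{\eta^2}{\alpha}$. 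With $\alpha=(K+1)^{-4/7}$ and $\eta=\hat\eta(K+1)^{-5/7}$ we get $\eta(K+1)=\hat\eta(K+1)^{2/7}$, $\sqrt\alpha=(K+1)^{-2/7}$, $(\eta/\alpha)^2=\hat\eta^2(K+1)^{-2/7}$, and $\tfrac1{\alpha(K+1)}=(K+1)^{-3/7}\le(K+1)^{-2/7}$ — precisely the stated $\cO((K+1)^{-2/7})$ bound.

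The main obstacle is the remainder $\alpha_k Z_f(y_{k+1},x_{k+1})$: its effective step $\norm{y_{k+1}-x_{k+1}}$ is of order $\eta/\alpha$, much larger than the $\eta$ of the ordinary Taylor remainder, so it is $(\eta/\alpha)^2$ rather than $\eta^2$ that governs the rate and must be tracked with care — both to keep $\exp(L_1\eta/\alpha)$ uniformly bounded and to ensure the $M_1\normstar{\nabla f(x_{k+1})}$ part of this remainder feeds back only an absorbable amount into $\sum_k\normstar{\nabla f(x_k)}$. This is why $\hat\eta=\min\{\tfrac1{3L_1},\tfrac1{3\sqrt{M_1}}\}$ and why the exponents $4/7$ and $5/7$ are chosen so that $\eta/\alpha\to0$ while $\sqrt\alpha$ remains the dominant $(K+1)^{-2/7}$ scale; apart from this, the computation is a routine transcription of the proof of~\Cref{thm:V2_relaxed_smoothness} with $\sigma_H\equiv0$.
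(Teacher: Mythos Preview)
Your proposal is correct and follows essentially the same route as the paper: the descent inequality~\eqref{eq:descent_inequality} is reused verbatim, the error recursion is rewritten as $e_{k+1}=(1-\alpha_k)e_k+(1-\alpha_k)Z_f(x_k,x_{k+1})+\alpha_k Z_f(y_{k+1},x_{k+1})+\alpha_k\varepsilon^g_{k+1}$ via the Hessian cancellation at $x_{k+1}$, the two Taylor remainders are bounded by~\Cref{lemma:Relaxed_Grad_and_Hessian} with the crucial scale $\norm{y_{k+1}-x_{k+1}}\le\eta/\alpha$, the $\normstar{\nabla f}$ feedback is absorbed under the constraint $\eta/\alpha\le\hat\eta$, and the exponents $4/7,5/7$ balance everything to $(K+1)^{-2/7}$. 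The only cosmetic difference is that the paper combines the two $Z_f$ contributions via the identity $(1-\alpha)+\alpha\theta^2=\theta$ with $\theta=(1-\alpha)/\alpha$ before summing geometrically, whereas you bound them separately; and your noise term $\varepsilon^g_{k+1}$ is (correctly) evaluated at $y_{k+1}$ rather than $x_{k+1}$, which is immaterial since the variance bound in~\Cref{assum:Hessian_boundedvariance_v2} is uniform in the evaluation point.
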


\Cref{thm:IGT_relaxed_smoothness} establishes the $\cO(1/K^{2/7})$ convergence of LMO-based methods using extrapolated momentum under relaxed smoothness with respect to the arbitrary norm, which matches the known rate obtained by \citet[Corollary 5]{kovalev2025understanding} under traditional smoothness with respect to the arbitrary norm. 

\subsection{Proof of \Cref{thm:IGT_relaxed_smoothness}}

We prove the result in the following steps. 

\paragraph{Step 1) Proving the descent inequality.} 
By following the proof arguments in Step 1) of~\Cref{lemma:V1_relaxed_smoothness} (see \eqref{eq:descent_inequality}), we have 
\begin{eqnarray*}
    \sum_{k=0}^K \eta_k\varphi_k  \normstar{\nabla f(x_k)}    &\leq&  \Delta + 2 \sum_{k=0}^K \eta_k \normstar{\nabla f(x_k)-m_k}  + \frac{L_0}{2} \sum_{k=0}^K \exp(L_1 \eta_k) \eta_k^2,
\end{eqnarray*}
where $\varphi_k \eqdef \left( 1 - \exp(L_1 \eta_k) 
 \nicefrac{L_1\eta_k}{2} \right)$ and $\Delta \eqdef f(x_0) - f_{\inf}$.

\paragraph{Step 2) Bounding the error term.}
Next, we bound $\normstar{e_k}$, where $e_k \eqdef  m_k - \nabla f(x_k)$. 
From the definition of $e_k$,
\begin{eqnarray*}
    e_{k+1} 
    & = & m_{k+1} - \nabla f(x_{k+1}) \\
    & = & (1-\alpha_k)e_k  +\alpha_k\left(\nabla f(y_{k+1}) - \nabla f(x_{k+1}) -\nabla^2 f(x_{k+1})(y_{k+1}-x_{k+1})\right)\\
    && +(1-\alpha_k)\left(\nabla f(x_k) - \nabla f(x_{k+1}) +\frac{\alpha_k}{1-\alpha_k}\nabla^2 f(x_{k+1})(y_{k+1}-x_{k+1})\right)\\
    && + \alpha_k\left(\nabla f_{\xi_{k+1}}(x_{k+1}) - \nabla f(x_{k+1})\right)\\
    & = & (1-\alpha_k)e_k + (1-\alpha_k)Z_f(x_k;x_{k+1})  + \alpha_k Z_f(y_{k+1},x_{k+1})  + \alpha_k \varepsilon^g_{k+1} ,
\end{eqnarray*}
where $Z_f(x,y) = \nabla f(x) - \nabla f(y) + \nabla^2 f(y)(x-y)$ and $\varepsilon^g_{k+1} = \nabla f_{\xi_{k+1}}(x_{k+1}) - \nabla f(x_{k+1})$. 

By recursively applying the above inequality, 
\begin{eqnarray*}
    e_{k+1} 
    & =& \prod_{t=0}^k (1-\alpha_t) e_0 +\sum_{t=0}^k \left( \prod_{j=t+1}^k (1-\alpha_j) \right) (1-\alpha_t)  Z_f(x_t,x_{t+1}) \\
    && +\sum_{t=0}^k \left( \prod_{j=t+1}^k (1-\alpha_j) \right) \alpha_t Z_f(y_{t+1},x_{t+1})+  \sum_{t=0}^k \left(\prod_{j=t+1}^k (1-\alpha_j) \right) \alpha_t \varepsilon^g_{t+1}.
\end{eqnarray*}
Therefore, 
\begin{eqnarray*}
    \Exp{\normstar{e_{k+1} }}
    & \leq & \Exp{\normstar{\prod_{t=0}^k (1-\alpha_t) e_0}} + \underbrace{\Exp{\normstar{\sum_{t=0}^k \left( \prod_{j=t+1}^k (1-\alpha_j) \right) (1-\alpha_t)  Z_f(x_t,x_{t+1})}}}_{\eqdef \circledSix} \\
    && + \underbrace{\Exp{\normstar{\sum_{t=0}^k \left( \prod_{j=t+1}^k (1-\alpha_j) \right) \alpha_t  Z_f(y_{t+1},x_{t+1}) }}}_{\eqdef \circledSeven} +   \underbrace{\Exp{\normstar{\sum_{t=0}^k \left(\prod_{j=t+1}^k (1-\alpha_j) \right) \alpha_t \varepsilon^g_{t+1}}}}_{\eqdef \circledEight}.
\end{eqnarray*}

Next, we bound $\circledEight$ by using~\Cref{assum:Hessian_boundedvariance_v2} and by following the proof arguments for bounding $\circledOne$ in Step 2) of the convergence proof for~\Cref{lemma:V1_relaxed_smoothness}. Then, we have 
\begin{eqnarray*}
    \circledEight
    & \leq & {\color{red}\bar\rho}   \Exp{\normeu{\sum_{t=0}^k \left(\prod_{j=t+1}^k (1-\alpha_j) \right) \alpha_t \varepsilon^g_{t+1}}} \\
    & \leq & {\color{red}\bar\rho}  \sqrt{  \Exp{\normeu{\sum_{t=0}^k \left(\prod_{j=t+1}^k (1-\alpha_j) \right) \alpha_t \varepsilon^g_{t+1}}^2} } \\
    & \leq & {\color{red}\bar\rho} \sigma_g \sqrt{ \sum_{t=0}^k \left(\prod_{j=t+1}^k (1-\alpha_j)^2 \right) \alpha_t^2}, 
\end{eqnarray*}
Therefore, 
\begin{eqnarray*}
    \Exp{\normstar{e_{k+1} }}
    & \leq & \prod_{t=0}^k (1-\alpha_t)\Exp{\normstar{e_0}} + ~\circledSix~ +~\circledSeven~ +   {\color{red}\bar\rho} \sigma_g \sqrt{ \sum_{t=0}^k \left(\prod_{j=t+1}^k (1-\alpha_j)^2 \right) \alpha_t^2}.
\end{eqnarray*}

Next, we bound $\circledSix$ and $\circledSeven$: 
\begin{eqnarray*}
    \circledSix &\leq&  \sum_{t=0}^k \left( \prod_{j=t+1}^k (1-\alpha_j) \right) (1-\alpha_t) \Exp{ \normstar{ Z_f(x_t,x_{t+1}) } };\\
    \circledSeven &\leq&  \sum_{t=0}^k \left( \prod_{j=t+1}^k (1-\alpha_j) \right) \alpha_t \Exp{ \normstar{ Z_f(y_{t+1},x_{t+1}) } }
\end{eqnarray*}
Since $(1-\alpha_t) +\alpha_t \theta_t^2 = \frac{1-\alpha_t}{\alpha_t} = \theta_t$,
\begin{eqnarray*}
   \normstar{ Z_f(x_t,x_{t+1}) } & \leq &   \frac{1}{2}(M_0 + M_1 \normstar{\nabla f(x_{t+1})}) \norm{x_t-x_{t+1}}^2 \\
   && + \frac{1}{3} M_1 (L_0+L_1 \normstar{\nabla f(x_{t+1})}) \exp(L_1 \norm{x_t-x_{t+1}}) \norm{x_t-x_{t+1}}^3 \\
   & \leq &  \frac{1}{2}(M_0 + M_1 \normstar{\nabla f(x_{t+1})}) \eta_t^2 + \frac{1}{3} M_1 (L_0+L_1 \normstar{\nabla f(x_{t+1})}) \exp(L_1 \eta_t) \eta_t^3 \\
   & = & \left(\frac{M_0}{2} + \frac{M_1}{3}L_0\eta_t \exp(L_1\eta_t)\right) \eta_t^2\\
   && + \left(\frac{M_1}{2} + \frac{M_1}{3} L_1 \eta_t\exp(L_1 \eta_t) \right) \eta_t^2 \normstar{\nabla f(x_{t+1})};
\end{eqnarray*} and 
\begin{eqnarray*}
   \normstar{ Z_f(y_{t+1},x_{t+1}) } & \leq &   \frac{1}{2}(M_0 + M_1 \normstar{\nabla f(x_{t+1})}) \norm{y_{t+1}-x_{t+1}}^2 \\
   && + \frac{1}{3} M_1 (L_0+L_1 \normstar{\nabla f(x_{t+1})}) \exp(L_1 \norm{y_{t+1}-x_{t+1}}) \norm{x_t-x_{t+1}}^3 \\
   & \leq &  \frac{1}{2}(M_0 + M_1 \normstar{\nabla f(x_{t+1})}) \theta_t^2\eta_t^2 + \frac{1}{3} M_1 (L_0+L_1 \normstar{\nabla f(x_{t+1})}) \exp(L_1 \theta_t\eta_t) \theta^3_t\eta_t^3 \\
   & = & \left(\frac{M_0}{2} + \frac{M_1}{3}L_0\theta_t\eta_t \exp(L_1\theta_t\eta_t)\right) \theta_t^2\eta_t^2\\
   && + \left(\frac{M_1}{2} + \frac{M_1}{3} L_1 \theta_t\eta_t\exp(L_1 \theta_t\eta_t) \right) \theta_t^2\eta_t^2 \normstar{\nabla f(x_{t+1})},
\end{eqnarray*}
we obtain
\begin{eqnarray*}
    \circledSix + \circledSeven &\leq&  \sum_{t=0}^k \left( \prod_{j=t+1}^k (1-\alpha_j) \right)  \left(\frac{M_0}{2} + \frac{M_1}{3}\cdot\frac{L_0 \eta_t}{\alpha_t}\exp\left(\frac{L_1 \eta_t}{\alpha_t}\right)\right) \theta_t\eta_t^2  \\
    && +  \sum_{t=0}^k \left( \prod_{j=t+1}^k (1-\alpha_j) \right) \left(\frac{M_1}{2} + \frac{M_1}{3}\cdot\frac{L_1 \eta_t}{\alpha_t}\exp\left(\frac{L_1 \eta_t}{\alpha_t}\right) \right) \theta_t\eta_t^2 \normstar{\nabla f(x_{t+1})}.
\end{eqnarray*}

Therefore, 
\begin{eqnarray*}
    \Exp{\normstar{e_{k+1} }}
    & \leq & \prod_{t=0}^k (1-\alpha_t)\Exp{\normstar{ e_0}}   +   {\color{red}\bar\rho} \sigma_g \sqrt{ \sum_{t=0}^k \left(\prod_{j=t+1}^k (1-\alpha_j)^2 \right) \alpha_t^2} \\
    &&+ \sum_{t=0}^k \left( \prod_{j=t+1}^k (1-\alpha_j) \right)  \left(\frac{M_0}{2} + \frac{M_1}{3}\cdot\frac{L_0 \eta_t}{\alpha_t} \exp\left(\frac{L_1 \eta_t}{\alpha_t}\right)\right) \theta_t\eta_t^2  \\
    && +  \sum_{t=0}^k \left( \prod_{j=t+1}^k (1-\alpha_j) \right) \left(\frac{M_1}{2} + \frac{M_1}{3} \frac{L_1 \eta_t}{\alpha_t}\exp\left(\frac{L_1 \eta_t}{\alpha_t}\right) \right) \theta_t \eta_t^2 \normstar{\nabla f(x_{t+1})}.
\end{eqnarray*}

If $\eta_k = \eta$ and $\alpha_k = \alpha$, then 
\begin{eqnarray*}
    \Exp{\normstar{e_{k+1} }}
    & \leq & (1-\alpha)^{k+1} \normstar{e_0} + \left(\frac{M_0}{2} + \frac{M_1}{3} \cdot\frac{L_0 \eta}{\alpha} \exp\left(\frac{L_1 \eta}{\alpha}\right) \right) \sum_{t=0}^k (1-\alpha)^{k-t} \theta\eta^2   \\
    && + \left(\frac{M_1}{2} + \frac{M_1}{3} \frac{L_1 \eta}{\alpha}\exp\left(\frac{L_1 \eta}{\alpha}\right) \right) \sum_{t=0}^k (1-\alpha)^{k-t}  \theta\eta^2 \normstar{\nabla f(x_{t+1})} \\
    && +   {\color{red}\bar\rho} \sigma_g \sqrt{ \sum_{t=0}^k (1-\alpha)^{2(k-t)} \alpha^2}.
\end{eqnarray*}

Next, since 
\begin{eqnarray*}
    \sum_{t=0}^{k-1} (1-\alpha)^{k-t}
    & \leq & \sum_{j=0}^\infty (1-\alpha)^j  =  \frac{1}{\alpha}, \\
    &\text{and}&\\
     \sum_{t=0}^{k-1} (1-\alpha)^{2(k-t)} \alpha^2  & \leq &  \alpha^2 \sum_{j=0}^\infty ((1-\alpha)^2)^j  = \frac{\alpha^2}{1-(1-\alpha)^2} = \frac{\alpha}{2-\alpha} 
     \overset{\alpha \in [0,1]}{\leq} \alpha,
\end{eqnarray*}
we obtain
\begin{eqnarray*}
    \Exp{\normstar{e_{k+1} }}
    & \leq & (1-\alpha)^{k+1} \normstar{e_0} + \left(\frac{M_0}{2} + \frac{M_1}{3}\cdot\frac{L_0 \eta}{\alpha} \exp\left(\frac{L_1\eta}{\alpha}\right)\right) \frac{\eta^2}{\alpha^2} +    {\color{red}\bar\rho} \sqrt{\alpha}\sigma_g  \\
    && + \left(\frac{M_1}{2} + \frac{M_1}{3}\frac{L_1\eta}{\alpha}\exp\left(\frac{L_1\eta}{\alpha}\right)\right) \sum_{t=0}^k (1-\alpha)^{k-t}  \theta\eta^2 \normstar{\nabla f(x_{t+1})} .
\end{eqnarray*}

Therefore, recalling $\varphi =\eta( 1 - \exp(L_1 \eta) 
 \nicefrac{L_1\eta}{2} ) $, we have 
\begin{eqnarray*}   
    \sum_{k=0}^K  \eta \varphi \Exp{\normstar{\nabla f(x_k)}}   &\leq&  \Delta + \frac{L_0}{2} \sum_{k=0}^K \exp(L_1 \eta) \eta^2 + 2  \eta \sum_{k=0}^K \Exp{\normstar{e_k}}   \\
    &\leq& \Delta  + \frac{L_0}{2}  \exp(L_1 \eta) \eta^2(K+1)  + 2  \eta \sum_{k=0}^K (1-\alpha)^{k} \normstar{e_0}\\
    &&+ \eta  \left(M_0 + \frac{2M_1}{3}\cdot\frac{L_0 \eta}{\alpha}\exp\left(\frac{L_1\eta}{\alpha}\right)\right) \frac{\eta^2}{\alpha^2} (K+1) \\
    && + \eta \left(M_1 + \frac{2M_1}{3}\frac{L_1\eta}{\alpha}\exp\left(\frac{L_1\eta}{\alpha}\right)\right)  \sum_{k=0}^K \sum_{t=0}^{k-1} (1-\alpha)^{k-t}  \theta\eta^2 \normstar{\nabla f(x_{t+1})} \\
    && + 2\eta{\color{red}\bar\rho} \sqrt{\alpha}\sigma_g(K+1),
\end{eqnarray*}
where $\Delta = \Exp{f(x_0)-f_{\inf}}$.

Next, since 
\begin{eqnarray*}
    \sum_{k=0}^K (1-\alpha)^k  \leq \frac{1}{\alpha}, &\text{and} & \sum_{k=0}^K \sum_{t=0}^{k-1} (1-\alpha)^{k-t} \normstar{\nabla f(x_{t+1})}
    \leq \frac{1}{\alpha} \sum_{k=0}^K \normstar{\nabla f(x_k)},
\end{eqnarray*}
we obtain
\begin{eqnarray*}
    \sum_{k=0}^K \eta\vartheta  \Exp{\normstar{\nabla f(x_k)}}
    & \leq& \Delta  + \frac{L_0\eta^2}{2}  \exp(L_1 \eta)  (K+1) + 2  \frac{\eta}{\alpha}  \Exp{\normstar{e_0}}\\
    && + \left(M_0 + \frac{2M_1}{3}\cdot\frac{L_0 \eta}{\alpha} \exp\left(\frac{L_1\eta}{\alpha}\right)\right) \frac{\eta^3}{\alpha^2}(K+1)  \\
    && +   {\color{red}\bar\rho} \sqrt{\alpha}\sigma_g(K+1),
\end{eqnarray*}
where $\vartheta \eqdef \left( 1 - \exp(L_1 \eta) 
 \frac{L_1\eta}{2}   -  \left(M_1 + \frac{2M_1}{3} \cdot\frac{L_1 \eta}{\alpha} \exp\left(\frac{L_1 \eta}{\alpha}\right) \right) \frac{\eta^2}{\alpha^2}  \right)$.

 If $\eta \leq \min\left\{ \frac{1}{3L_1} , \frac{1}{3\sqrt{M_1}}  \right\} \alpha$, then $\eta \leq \min\left\{ \frac{1}{3L_1} ,  \frac{1}{3\sqrt{M_1}} \right\}$ and 
\begin{eqnarray*}
    \frac{\eta}{2}\sum_{k=0}^K   \Exp{\normstar{\nabla f(x_k)}}   
    &\leq& \Delta  + (K+1)\frac{L_0}{2}  \exp(L_1 \eta) \eta^2  + 2  \frac{\eta}{\alpha}  \Exp{\normstar{e_0}} + 2\eta   {\color{red}\bar\rho} \sqrt{\alpha}\sigma_g(K+1)\\
    &&+ \left(M_0 + \frac{2M_1}{3}\cdot\frac{L_0 \eta}{\alpha} \exp\left(\frac{L_1\eta}{\alpha}\right)\right) \frac{\eta^3}{\alpha^2}(K+1).
\end{eqnarray*}

Therefore, 
\begin{eqnarray*}
    \frac{1}{K+1}\sum_{k=0}^K   \Exp{\normstar{\nabla f(x_k)}} 
    & \leq & \frac{2\Delta}{\eta(K+1)}  +L_0 \exp(L_1 \eta) \eta  +  \frac{4\Exp{\normstar{e_0}}}{\alpha (K+1)}  + 4   {\color{red}\bar\rho} \sqrt{\alpha}\sigma_g\\
    && + 2 \left(M_0 + \frac{2M_1}{3}\cdot\frac{L_0 \eta}{\alpha} \exp\left(\frac{L_1\eta}{\alpha}\right)\right) \frac{\eta^2}{\alpha^2}.
\end{eqnarray*}

If $\eta = \frac{\hat \eta}{(K+1)^{\nicefrac{5}{7}}} $ with $\hat \eta = \min\left\{ \frac{1}{3L_1} , \frac{1}{3\sqrt{M_1}}  \right\} $ and $\alpha = \frac{1}{(K+1)^{\nicefrac{4}{7}}}$, then 
\begin{eqnarray*}
    \frac{1}{K+1}\sum_{k=0}^K   \Exp{\normstar{\nabla f(x_k)}}   
    &\leq& \frac{2\Delta}{\hat\eta(K+1)^{\nicefrac{2}{7}}}  + \frac{\hat\eta L_0 \exp(L_1 \hat \eta)}{(K+1)^{\nicefrac{5}{7}}}  + \frac{4\Exp{\normstar{e_0}}}{ (K+1)^{\nicefrac{2}{7}}} +   4\frac{{\color{red}\bar\rho} \sigma_g}{(K+1)^{\nicefrac{2}{7}}}\\
    && + 2\left(M_0 + \frac{2M_1}{3} L_0\hat \eta \exp(L_1\hat \eta)\right) \frac{\hat \eta^2}{(K+1)^{\nicefrac{2}{7}}}.
\end{eqnarray*}

Finally, by the fact that $\min_{k \in \{0,1,\ldots,K\}} \Exp{\normstar{\nabla f(x_k)}}
\leq \frac{1}{K+1}\sum_{k=0}^K \Exp{\normstar{\nabla f(x_k)}}$, we obtain the final result.

\newpage
\section{LMO-based Methods with Momentum Variance Reduction}\label{app:MVR_relaxed_smoothness}

This section establishes the convergence properties of the LMO-based algorithms described in~\eqref{eqn:LMO_momentum}, utilizing the momentum-based variance reduction update given by \eqref{eq:MARS}.

To facilitate the convergence analysis of MVR, STORM, and MARS, we introduce an additional assumption. We refer to this condition as the \textit{generalized mean-squared smoothness assumption} \citep{chen2023generalized}, as it generalizes the standard mean-squared smoothness assumption proposed by \citet{arjevani2023lower}.

\begin{assumption}
\label{assum:Relaxed_Mean_Squared_Lipschitz_Grad}
    The stochastic gradients of $f$ are mean-squared symmetrically $(\cL_0, \cL_1)$-Lipschitz continuous with respect to the norm $\|\cdot\|$, i.e., for all $x, y \in \R^d$,
    \begin{eqnarray*}
        \ExpSub{\xi}{\normstar{\nabla f_{\xi}(x)-\nabla f_{\xi}(y)}^2}
        &\leq& \ExpSub{\xi}{\left(\cL_0+\cL_1\sup_{\theta \in [0,1]}\normstar{\nabla f_{\xi}(\theta x + (1-\theta)y)}\right)^2} \norm{x-y}^2.
    \end{eqnarray*}
\end{assumption}

As shown by \citet{chen2023generalized}, \Cref{assum:Relaxed_Mean_Squared_Lipschitz_Grad} is a stronger condition than the generalized smoothness in \Cref{assum:Relaxed_Lipschitz_Grad}.

\begin{lemma}[Proposition 5.3 from \citep{chen2023generalized}]
\label{lemma:Relaxed_Mean_Squared_Lipschitz_Grad}
    Let $f$ satisfy Assumption~\ref{assum:Relaxed_Mean_Squared_Lipschitz_Grad}. Then, for all $x,y \in \R^d$,
    \begin{eqnarray*}
        \ExpSub{\xi}{\normstar{\nabla f(x) -\nabla f(y)}^2} &\leq& 2\left(\cL_0^2 + 2\cL_1^2\ExpSub{\xi}{\normstar{\nabla f_{\xi}(y)}^2}\right)\exp\left(12\cL_1^2\norm{x-y}^2\right)\norm{x-y}^2.
    \end{eqnarray*}
\end{lemma}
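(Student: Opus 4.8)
The plan is to mirror the proof of the deterministic gradient-difference bound in \Cref{lemma:Relaxed_Lipschitz_Grad}, but carried out in $L^2(\xi)$ and with a Gr\"onwall (integrating-factor) argument replacing the segment supremum $\sup_{\theta\in[0,1]}\normstar{\nabla f_\xi(\theta x+(1-\theta)y)}$ by the endpoint quantity $\E_\xi[\normstar{\nabla f_\xi(y)}^2]$. First I would reduce from $f$ to $f_\xi$: since $\nabla f(x)-\nabla f(y)=\E_\xi[\nabla f_\xi(x)-\nabla f_\xi(y)]$ and $z\mapsto\normstar{z}^2$ is convex, Jensen's inequality gives $\normstar{\nabla f(x)-\nabla f(y)}^2\le\E_\xi[\normstar{\nabla f_\xi(x)-\nabla f_\xi(y)}^2]$. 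Applying \Cref{assum:Relaxed_Mean_Squared_Lipschitz_Grad} to the pair $(x,y)$ and using $(a+b)^2\le 2a^2+2b^2$ bounds the right-hand side by $\bigl(2\cL_0^2+2\cL_1^2\,\E_\xi[S_\xi^2]\bigr)\norm{x-y}^2$, where $S_\xi:=\sup_{\theta\in[0,1]}\normstar{\nabla f_\xi(\theta x+(1-\theta)y)}$. So everything reduces to bounding $\E_\xi[S_\xi^2]$ in terms of $\E_\xi[\normstar{\nabla f_\xi(y)}^2]$ and $\norm{x-y}$.

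For this I would parametrize the segment by $z_t:=y+t(x-y)$, $t\in[0,1]$, and use twice-differentiability of $f_\xi$ together with the fundamental theorem of calculus: $\nabla f_\xi(z_r)-\nabla f_\xi(y)=\int_0^r\nabla^2 f_\xi(z_u)(x-y)\,du$, so for every $r\in[0,t]$, $\normstar{\nabla f_\xi(z_r)}\le\normstar{\nabla f_\xi(y)}+\int_0^t\normstar{\nabla^2 f_\xi(z_u)(x-y)}\,du$ (the integrand is nonnegative). Squaring, using $(a+b)^2\le2a^2+2b^2$ and Cauchy--Schwarz on the integral, and taking $\E_\xi$, the nondecreasing auxiliary function $W(t):=\E_\xi\bigl[\sup_{r\in[0,t]}\normstar{\nabla f_\xi(z_r)}^2\bigr]$ obeys $W(t)\le 2\E_\xi[\normstar{\nabla f_\xi(y)}^2]+2t\int_0^t\E_\xi\bigl[\normstar{\nabla^2 f_\xi(z_u)(x-y)}^2\bigr]du$. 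Next I would extract from \Cref{assum:Relaxed_Mean_Squared_Lipschitz_Grad} the differential (Hessian) form — take the pair $(z_u+hv,z_u)$, divide by $h^2$, let $h\to0$, and collapse the supremum by continuity — to get $\E_\xi[\normstar{\nabla^2 f_\xi(z_u)v}^2]\le\bigl(2\cL_0^2+2\cL_1^2\E_\xi[\normstar{\nabla f_\xi(z_u)}^2]\bigr)\norm{v}^2\le\bigl(2\cL_0^2+2\cL_1^2W(u)\bigr)\norm{v}^2$. Plugging this in with $v=x-y$ and bounding $t\le1$ yields $W(t)\le 2\E_\xi[\normstar{\nabla f_\xi(y)}^2]+4\cL_0^2\norm{x-y}^2+4\cL_1^2\norm{x-y}^2\int_0^tW(u)\,du$, and Gr\"onwall's inequality gives $W(1)\le 2\bigl(\E_\xi[\normstar{\nabla f_\xi(y)}^2]+2\cL_0^2\norm{x-y}^2\bigr)\exp\!\bigl(4\cL_1^2\norm{x-y}^2\bigr)$.

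Finally I would substitute $\E_\xi[S_\xi^2]=W(1)$ into the first-paragraph bound and clean up the constants. Writing $c:=4\cL_1^2\norm{x-y}^2$, the cross term produces a factor $c\,e^{c}$ that is absorbed via the elementary inequality $1+c\,e^{c}\le e^{3c}$ for $c\ge0$ (valid since $3e^{2c}\ge1+c$ makes $e^{3c}-1-ce^{c}$ nondecreasing with value $0$ at $c=0$), while the $\E_\xi[\normstar{\nabla f_\xi(y)}^2]$ term only needs $e^{c}\le e^{3c}$. This produces exactly $2\bigl(\cL_0^2+2\cL_1^2\E_\xi[\normstar{\nabla f_\xi(y)}^2]\bigr)\exp\!\bigl(12\cL_1^2\norm{x-y}^2\bigr)\norm{x-y}^2$, as claimed.

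The main obstacle is the interchange of the segment supremum with $\E_\xi$: \Cref{assum:Relaxed_Mean_Squared_Lipschitz_Grad} only controls $\E_\xi[\normstar{\nabla f_\xi(x)-\nabla f_\xi(y)}^2]$ for fixed endpoints, and pointwise-in-$\xi$ smoothness need not hold, so a per-sample Gr\"onwall argument is unavailable. Routing through the Hessian and the integral representation $\nabla f_\xi(z_r)-\nabla f_\xi(y)=\int_0^r\nabla^2 f_\xi(z_u)(x-y)du$ sidesteps this cleanly at the (here free) cost of twice-differentiability of $f_\xi$; under mere $C^1$ regularity one would instead discretize the segment, apply the $L^2(\xi)$ increment bound on a mesh, run a discrete Gr\"onwall, and pass to the limit using continuity of $t\mapsto\nabla f_\xi(z_t)$. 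A secondary nuisance is simply tracking the factors of $2$ so the exponent lands at $12\cL_1^2$ rather than something larger, which is precisely the role of $1+ce^{c}\le e^{3c}$.
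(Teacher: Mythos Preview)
Your proposal is correct and carries out precisely the Gr\"onwall/integrating-factor argument that underlies Proposition~5.3 of \citet{chen2023generalized}; the paper itself gives no independent proof here but simply defers to that reference, noting only that the argument transfers verbatim to arbitrary norms. Your writeup in fact fills in the details the paper omits, including the constant-tracking via $1+ce^{c}\le e^{3c}$ that lands the exponent at $12\cL_1^2$, and you correctly flag the only real technical point (the detour through $\nabla^2 f_\xi$ needs per-sample twice-differentiability, which is implicit elsewhere in the paper via \Cref{assum:Hessian_boundedvariance_v2}, or else one discretizes the segment as you outline).
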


\begin{proof}
    The proof adapts the arguments of Proposition 5.3 in \citep{chen2023generalized} to the setting of arbitrary norms.
\end{proof}

Now, we provide the main convergence theorem.

\begin{theorem}
    \label{thm:MVR_relaxed_smoothness}
    Consider the problem of minimizing $f(x)=\ExpSub{\xi \sim \cD}{f_{\xi}(x)}$. Suppose Assumptions~\ref{assum:Hessian_boundedvariance_v2},~\ref{assum:lower_bound},~\ref{assum:Relaxed_Lipschitz_Grad}, and~\ref{assum:Relaxed_Mean_Squared_Lipschitz_Grad} hold.
    Then, the iterates $\{x_k\}$ generated by the LMO-based method in~\eqref{eqn:LMO_momentum} using the momentum variance reduction in~\eqref{eq:MARS} with  $\beta_k = 1-\alpha_k$, and  step sizes
    \begin{eqnarray*}
        \alpha_k = \alpha = \frac{1}{(K+1)^{\nicefrac{2}{3}}} \quad \text{and} \quad \eta_k= \eta = \frac{\hat \eta}{(K+1)^{\nicefrac{2}{3}}},
    \end{eqnarray*}
    where $\hat \eta = \frac{1}{3}\min\left\{\frac{1}{L_1}, \frac{1}{11\cL_1}\left(\frac{\color{red}\bar \rho}{\color{red} \ubar \rho}\right)^{-1}\right\}$, satisfy
    \begin{eqnarray*}
        \min_{k \in \{0,1,\ldots,K\}} \Exp{\normstar{\nabla f(x_k)}}
        &\leq& \frac{2(f(x_0) - f_{\inf})}{\hat\eta(K+1)^{\nicefrac{1}{3}}}  + \frac{4\left( \Exp{ \normstar{e_0}} + {\color{red} \bar \rho} \sigma_g\right)}{ (K+1)^{\nicefrac{1}{3}}} +  \frac{\hat\eta L_0 \exp(L_1 \hat \eta)}{(K+1)^{\nicefrac{2}{3}}} \\
        && + 8 \frac{\color{red}\bar \rho}{\color{red} \ubar \rho} \left(\cL_0 + 2\cL_1 {\color{red} \bar \rho} \sigma_g\right)\exp\left(6 \cL^2_1\hat \eta^2\right)\frac{\hat \eta}{(K+1)^{\nicefrac{1}{3}}}
    \end{eqnarray*}
\end{theorem}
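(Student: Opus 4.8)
The plan is to follow, almost line for line, the six-step template used for \Cref{lemma:V1_relaxed_smoothness}, with the Hessian-vector correction replaced by the stochastic gradient-difference correction and the Hessian hypotheses replaced by the generalized mean-squared smoothness \Cref{assum:Relaxed_Mean_Squared_Lipschitz_Grad} together with \Cref{lemma:Relaxed_Mean_Squared_Lipschitz_Grad}. \textbf{Step 1 (descent).} This is identical to \eqref{eq:descent_inequality}, since the LMO update $x_{k+1}=x_k+\text{lmo}(m_k)$ and \Cref{lemma:Relaxed_Lipschitz_Grad} use only \Cref{assum:Relaxed_Lipschitz_Grad}: with $e_k\eqdef m_k-\nabla f(x_k)$, $\varphi_k\eqdef 1-\exp(L_1\eta_k)L_1\eta_k/2$, and $\Delta\eqdef f(x_0)-f_{\inf}$, one gets $\sum_{k=0}^K\eta_k\varphi_k\normstar{\nabla f(x_k)}\le\Delta+2\sum_{k=0}^K\eta_k\Exp{\normstar{e_k}}+\tfrac{L_0}{2}\sum_{k=0}^K\exp(L_1\eta_k)\eta_k^2$.

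\textbf{Step 2 (error recursion).} Inserting the MARS update \eqref{eq:MARS} with $\beta_k=1-\alpha_k$ into $e_{k+1}=m_{k+1}-\nabla f(x_{k+1})$ and adding/subtracting the deterministic increment $\nabla f(x_{k+1})-\nabla f(x_k)$ yields $e_{k+1}=(1-\alpha_k)e_k+W_{k+1}+\alpha_k\varepsilon^g_{k+1}$ (up to a deterministic remainder of size $O(\alpha_k\eta_k)$, bounded by \Cref{lemma:Relaxed_Lipschitz_Grad} and absorbed later), where $W_{k+1}\eqdef[\nabla f_{\xi_{k+1}}(x_{k+1})-\nabla f_{\xi_{k+1}}(x_k)]-[\nabla f(x_{k+1})-\nabla f(x_k)]$ and $\varepsilon^g_{k+1}\eqdef\nabla f_{\xi_{k+1}}(x_k)-\nabla f(x_k)$. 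Since $x_k,x_{k+1}$ are generated before $\xi_{k+1}$ is drawn, \Cref{assum:Hessian_boundedvariance_v2} gives $\ExpSub{\xi_{k+1}}{W_{k+1}}=0$, $\ExpSub{\xi_{k+1}}{\varepsilon^g_{k+1}}=0$, and the usual cross-term cancellations. Unrolling, taking $\normstar{\cdot}$, applying Jensen's inequality, and passing to the Euclidean norm through \eqref{eqn:norm_diff} exactly as in Step 2 of \Cref{lemma:V1_relaxed_smoothness} gives
\[
\Exp{\normstar{e_{k+1}}}\le\prod_{t=0}^k(1-\alpha_t)\Exp{\normstar{e_0}}+{\color{red}\bar\rho}\sqrt{\sum_{t=0}^k\Big(\prod_{j=t+1}^k(1-\alpha_j)^2\Big)\Exp{\normeu{W_{t+1}}^2}}+{\color{red}\bar\rho}\sigma_g\sqrt{\sum_{t=0}^k\Big(\prod_{j=t+1}^k(1-\alpha_j)^2\Big)\alpha_t^2}.
\]
\textbf{Step 3 (bounding $\Exp{\normeu{W_{k+1}}^2}$).} As the deterministic part of $W_{k+1}$ is its conditional mean, $\Exp{\normeu{W_{k+1}}^2}\le\Exp{\normeu{\nabla f_{\xi_{k+1}}(x_{k+1})-\nabla f_{\xi_{k+1}}(x_k)}^2}$; converting to the dual norm via ${\color{red}\ubar\rho}$ and using (the stochastic form of) \Cref{lemma:Relaxed_Mean_Squared_Lipschitz_Grad} together with $\norm{x_{k+1}-x_k}\le\eta_k$ and $\Exp{\normstar{\nabla f_\xi(x_k)}^2}\le2\normstar{\nabla f(x_k)}^2+2{\color{red}\bar\rho}^2\sigma_g^2$, one obtains $\Exp{\normeu{W_{k+1}}^2}\lesssim\tfrac{1}{{\color{red}\ubar\rho}^2}\big(\cL_0^2+\cL_1^2{\color{red}\bar\rho}^2\sigma_g^2+\cL_1^2\normstar{\nabla f(x_k)}^2\big)\exp(12\cL_1^2\eta_k^2)\eta_k^2$.

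\textbf{Steps 4--6 (feed back, self-bound, rate).} Plugging this into the Step-2 bound, specializing to $\eta_k\equiv\eta$, $\alpha_k\equiv\alpha$, using $\sum(1-\alpha)^{(\cdot)}\le1/\alpha$ and $\sum(1-\alpha)^{2(\cdot)}\alpha^2\le\alpha$, and linearizing the gradient-norm terms via $\sqrt{\sum_t a_t^2}\le\sum_t a_t$ (as in Step 5 of \Cref{lemma:V1_relaxed_smoothness}) gives $\Exp{\normstar{e_{k+1}}}\le(1-\alpha)^{k}\Exp{\normstar{e_0}}+{\color{red}\bar\rho}\hat c\,\eta\sum_{t=0}^k(1-\alpha)^{k-t+1}\Exp{\normstar{\nabla f(x_t)}}+{\color{red}\bar\rho}\tfrac{\eta}{\sqrt\alpha}\hat B+{\color{red}\bar\rho}\sqrt\alpha\,\sigma_g$, with $\hat c$ proportional to $\cL_1\exp(6\cL_1^2\eta^2)/{\color{red}\ubar\rho}$ and $\hat B$ proportional to $(\cL_0+2\cL_1{\color{red}\bar\rho}\sigma_g)\exp(6\cL_1^2\eta^2)/{\color{red}\ubar\rho}$. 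Substituting into the descent inequality and exchanging the double sum $\sum_k\sum_t(1-\alpha)^{k-t}\le\tfrac1\alpha\sum_k$, the coefficient of $\sum_k\Exp{\normstar{\nabla f(x_k)}}$ on the right is of order ${\color{red}\bar\rho}\hat c\,\eta^2/\alpha={\color{red}\bar\rho}\hat c\,\hat\eta\eta$; with $\eta=\hat\eta\alpha$ and $\hat\eta=\tfrac13\min\{1/L_1,\tfrac{1}{11\cL_1}({\color{red}\bar\rho}/{\color{red}\ubar\rho})^{-1}\}$ this coefficient is at most $\tfrac14$ and simultaneously $\varphi\ge\tfrac12$, so the term is absorbed into the left side leaving $\tfrac\eta2\sum_k\Exp{\normstar{\nabla f(x_k)}}$. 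Dividing by $K+1$ and setting $\alpha=(K+1)^{-2/3}$, $\eta=\hat\eta(K+1)^{-2/3}$ converts $\tfrac{\Delta}{\eta(K+1)}$, $\tfrac{\Exp{\normstar{e_0}}}{\alpha(K+1)}$, ${\color{red}\bar\rho}\tfrac{\eta}{\sqrt\alpha}\hat B$, ${\color{red}\bar\rho}\sqrt\alpha\sigma_g$, and $L_0\exp(L_1\eta)\eta$ into the five terms of the claimed $\cO((K+1)^{-1/3})$ bound, and $\min_k\Exp{\normstar{\nabla f(x_k)}}\le\tfrac1{K+1}\sum_k\Exp{\normstar{\nabla f(x_k)}}$ closes the argument.

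\textbf{Main obstacle.} The delicate part is Steps 3--5: the mean-squared-smoothness estimate on the stochastic gradient difference must be packaged as $(\text{const})\eta_k^2+(\text{const})\eta_k^2\normstar{\nabla f(x_k)}^2$, so that after the square root and the $\sqrt{\sum_t a_t^2}\le\sum_t a_t$ step the gradient-norm contribution to $\Exp{\normstar{e_{k+1}}}$ is \emph{linear} and of order $\eta$ (not $\sqrt\eta$); and the arbitrary-to-Euclidean conversion constants ${\color{red}\bar\rho},{\color{red}\ubar\rho}$ have to be tracked carefully, since they are precisely what forces the $({\color{red}\bar\rho}/{\color{red}\ubar\rho})^{-1}$ factor in $\hat\eta$ and what reappears as $\tfrac{{\color{red}\bar\rho}}{{\color{red}\ubar\rho}}$ in the final constant. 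Verifying that the self-bounding inequality genuinely closes for the stated $\hat\eta$ — that the fed-back $\sum_k\normstar{\nabla f(x_k)}$ term is strictly dominated by the descent term — is the crux.
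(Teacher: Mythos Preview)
Your overall plan matches the paper's proof, but there is a real gap at the step you yourself flag as the crux. In Step~2 you apply Jensen once, globally, obtaining ${\color{red}\bar\rho}\sqrt{\sum_t c_t^2\,\Exp{\normeu{W_{t+1}}^2}}$ with the full expectation already inside the root. Since $\Exp{\normeu{W_{t+1}}^2}\le A_t^2+(\text{const})\,\eta^2\,\Exp{\normstar{\nabla f(x_t)}^2}$ is then a deterministic number, the ``linearization $\sqrt{\sum a_t^2}\le\sum a_t$'' in Steps~4--6 produces $\sum_t c_t\sqrt{\Exp{\normstar{\nabla f(x_t)}^2}}$, \emph{not} the first moment $\sum_t c_t\,\Exp{\normstar{\nabla f(x_t)}}$ you write down. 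By Jensen this goes the wrong way, so the fed-back gradient term cannot be absorbed into the left side $\sum_k\Exp{\normstar{\nabla f(x_k)}}$ of the descent inequality, and the self-bounding loop does not close.

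The paper fixes exactly this point with an \emph{iterated conditional-Jensen} argument. Writing $\hat S_{t+1}$ for your $W_{t+1}$, it bounds the conditional second moment $\ExpSub{t+1}{\normeu{\hat S_{t+1}}^2}\le A_t^2+B_t^2$ with $B_t\propto\eta_t\normstar{\nabla f(x_t)}$ random and $\mathcal F_t$-measurable; then it peels one layer at a time --- condition on $\xi_{k+1}$, apply Jensen conditionally, use the martingale orthogonality to isolate $c_k^2\ExpSub{k+1}{\normeu{\hat S_{k+1}}^2}$, split off $c_kB_k$ via $\sqrt{a+b}\le\sqrt a+\sqrt b$, take the outer expectation, and repeat for $\xi_k,\xi_{k-1},\ldots$. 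The output is ${\color{red}\bar\rho}\sqrt{\sum_t c_t^2 A_t^2}+{\color{red}\bar\rho}\sum_t c_t\,\Exp{B_t}$: the deterministic $A$-part keeps the martingale $\sqrt{\sum}$ saving (giving the $\eta/\sqrt\alpha$ term), while the gradient-dependent $B$-part now carries the first moment $\Exp{\normstar{\nabla f(x_t)}}$ needed to close the self-bounding. You inherited the global-Jensen shortcut from the write-up of \Cref{lemma:V1_relaxed_smoothness}, which is loose at precisely this step; the proof of \Cref{thm:MVR_relaxed_smoothness} is where the paper spells out the rigorous version. A minor aside: with the decomposition the paper actually uses, $e_{k+1}=(1-\alpha_k)e_k+(1-\alpha_k)\hat S_{k+1}+\alpha_k\varepsilon^g_{k+1}$ is exact and there is no $O(\alpha_k\eta_k)$ deterministic remainder.
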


\subsection{Proof of \Cref{thm:MVR_relaxed_smoothness}}

We prove the result in the following steps. 

\paragraph{Step 1) Proving the descent inequality.} 
By following the proof arguments in Step 1) of~\Cref{lemma:V1_relaxed_smoothness} (see \eqref{eq:descent_inequality}), we have 
\begin{eqnarray*}
    \sum_{k=0}^K \eta_k\varphi_k  \normstar{\nabla f(x_k)}    &\leq&  \Delta + 2 \sum_{k=0}^K \eta_k \normstar{\nabla f(x_k)-m_k}  + \frac{L_0}{2} \sum_{k=0}^K \exp(L_1 \eta_k) \eta_k^2,
\end{eqnarray*}
where $\varphi_k \eqdef \left( 1 - \exp(L_1 \eta_k) 
 \nicefrac{L_1\eta_k}{2} \right)$ and $\Delta \eqdef f(x_0) - f_{\inf}$.

\paragraph{Step 2) Bounding the error term.}
Next, we bound $\normstar{e_k}$, where $e_k \eqdef  m_k - \nabla f(x_k)$. 
From the definition of $e_k$,
\begin{eqnarray*}
    e_{k+1} 
    & = & m_{k+1} - \nabla f(x_{k+1}) \\
    & = & (1-\alpha_k)\left(m_k - \nabla f(x_k)\right) + (1-\alpha_k)\left[(\nabla f_{\xi_{k+1}}(x_{k+1}) - \nabla f_{\xi_{k+1}}(x_{k})) - (\nabla f(x_{k+1}) - \nabla f(x_{k}))\right]\\
    && +\alpha_k(\nabla f_{\xi_{k+1}}(x_{k+1}) - \nabla f(x_{k+1}))\\
    &=& (1-\alpha_k)e_k + (1-\alpha_k)\hat S_{k+1} + \alpha_k \varepsilon^g_{k+1},
\end{eqnarray*}
 where we denote $\hat S_{t+1} = (\nabla f_{\xi_{t+1}}(x_{t+1}) - \nabla f_{\xi_{t+1}}(x_{t})) - (\nabla f(x_{t+1}) - \nabla f(x_{t})) $ and $\varepsilon^g_{t+1} = \nabla f_{\xi_{t+1}}(x_{t+1}) - \nabla f(x_{t+1})$ for any $t \geq 0$.

By recursively applying the above inequality, 
\begin{equation*}
    e_{k+1} = \prod^{k}_{t=0}(1-\alpha_t)e_0 + \sum^k_{t=0} \left(\prod^k_{j = t+1}(1-\alpha_j)\right)(1-\alpha_t)\hat S_{t+1} + \sum^k_{t=0} \left(\prod^k_{j = t+1}(1-\alpha_j)\right)\alpha_t\varepsilon^g_{t+1}.
\end{equation*}
Therefore, 
\begin{eqnarray*}
    \Exp{\normstar{e_{k+1}}} &\leq& \Exp{\normstar{\prod^{k}_{t=0}(1-\alpha_t)e_0}} + \underbrace{\Exp{\normstar{\sum^k_{t=0} \left(\prod^k_{j = t+1}(1-\alpha_j)\right)(1-\alpha_t)\hat S_{t+1}}}}_{\eqdef \circledNine}\\
    && + \underbrace{\Exp{\normstar{\sum^k_{t=0} \left(\prod^k_{j = t+1}(1-\alpha_j)\right)\alpha_t\varepsilon^g_{t+1}}}}_{\eqdef \circledTen}
\end{eqnarray*}

Next, we bound $\circledTen$ by using~\Cref{assum:Hessian_boundedvariance_v2} and by following the proof arguments for bounding $\circledOne$ in Step 2) of the convergence proof for~\Cref{lemma:V1_relaxed_smoothness}. Then, we have 
\begin{eqnarray*}
    \circledTen
    & \leq & {\color{red}\bar\rho}   \Exp{\normeu{\sum_{t=0}^k \left(\prod_{j=t+1}^k (1-\alpha_j) \right) \alpha_t \varepsilon^g_{t+1}}} \\
    & \leq & {\color{red}\bar\rho}  \sqrt{  \Exp{\normeu{\sum_{t=0}^k \left(\prod_{j=t+1}^k (1-\alpha_j) \right) \alpha_t \varepsilon^g_{t+1}}^2} } \\
    & \leq & {\color{red}\bar\rho} \sigma_g \sqrt{ \sum_{t=0}^k \left(\prod_{j=t+1}^k (1-\alpha_j)^2 \right) \alpha_t^2}, 
\end{eqnarray*}
Therefore, 
\begin{eqnarray*}
    \Exp{\normstar{e_{k+1} }}
    & \leq & \prod_{t=0}^k (1-\alpha_t)\Exp{\normstar{e_0}} + ~\circledNine~  +   {\color{red}\bar\rho} \sigma_g \sqrt{ \sum_{t=0}^k \left(\prod_{j=t+1}^k (1-\alpha_j)^2 \right) \alpha_t^2}.
\end{eqnarray*}

Next, we bound $\circledNine$. 
\begin{eqnarray*}
    \circledNine &\leq& {\color{red}\bar\rho} \Exp{\norm{\sum^k_{t=0} \left(\prod^k_{j = t+1}(1-\alpha_j)\right)(1-\alpha_t)\hat S_{t+1}}_2}\\
    &=& {\color{red}\bar\rho} \Exp{\ExpSub{k+1}{\norm{\sum^{k-1}_{t=0} \left(\prod^k_{j = t+1}(1-\alpha_j)\right)(1-\alpha_t)\hat S_{t+1} + (1-\alpha_k)\hat S_{k+1}}_2}}.
\end{eqnarray*}
By Jensen's inequality, we have 
\begin{eqnarray*}
    \circledNine &\leq&  {\color{red}\bar\rho} \Exp{\left(\ExpSub{k+1}{\norm{\sum^{k-1}_{t=0} \left(\prod^k_{j = t+1}(1-\alpha_j)\right)(1-\alpha_t)\hat S_{t+1} + (1-\alpha_k)\hat S_{k+1}}_2^2}\right)^{\nicefrac{1}{2}}}\\
    &=& {\color{red}\bar\rho} \Exp{\left(\norm{\sum^{k-1}_{t=0} \left(\prod^k_{j = t+1}(1-\alpha_j)\right)(1-\alpha_t)\hat S_{t+1}}^2_2 + (1-\alpha_k)^2\ExpSub{k+1}{\norm{\hat S_{k+1}}_2^2}\right)^{\nicefrac{1}{2}}}.
\end{eqnarray*}
To continue, we need to bound $\ExpSub{t+1}{\norm{\hat S_{t+1}}_2^2}$ for any $t \geq 0$. By \Cref{lemma:Relaxed_Mean_Squared_Lipschitz_Grad}
\begin{eqnarray*}
    \ExpSub{t+1}{\norm{\hat S_{t+1}}_2^2} &\leq& \ExpSub{t+1}{\norm{\nabla f_{\xi_{t+1}}(x_{t+1}) - \nabla f_{\xi_{t+1}}(x_{t})}_2^2} \\
    &\leq&\frac{1}{\color{red} \ubar{\rho}^2}\ExpSub{t+1}{\normstar{\nabla f_{\xi_{t+1}}(x_{t+1}) - \nabla f_{\xi_{t+1}}(x_{t})}^2}\\
    &\leq& \frac{2}{\color{red} \ubar{\rho}^2}\left(\cL_0^2 +2 \cL_1^2 \ExpSub{t+1}{\normstar{\nabla f_{\xi_{t+1}}(x_t)}^2}\right)\exp\left(12 \cL^2_1\norm{x_{t+1} - x_t}^2\right)\norm{x_{t+1} - x_t}^2\\
    &\leq& \frac{2}{\color{red} \ubar{\rho}^2}\left(\cL_0^2 +2 \cL_1^2 \ExpSub{t+1}{\normstar{\nabla f_{\xi_{t+1}}(x_t)}^2}\right)\exp\left(12 \cL^2_1\eta_t^2\right)\eta_t^2\\
    &\leq& \frac{2}{\color{red} \ubar{\rho}^2}\left(\cL_0^2 +4 \cL_1^2 \ExpSub{t+1}{\normstar{\nabla f_{\xi_{t+1}}(x_t) - \nabla f_{\xi_{t+1}}(x_t)}^2}\right)\exp\left(12 \cL^2_1\eta_t^2\right)\eta_t^2\\
    && + \frac{8}{\color{red} \ubar{\rho}^2}  \eta_t^2\cL_1^2 \exp\left(12 \cL^2_1\eta_t^2\right)\normstar{\nabla f(x_t)}^2\\
    &\leq& \underbrace{\left(\frac{2}{\color{red} \ubar{\rho}^2}\eta_t^2\cL_0^2 +\frac{8{\color{red}\bar\rho^2}}{\color{red} \ubar{\rho}^2}\eta_t^2\cL_1^2  \sigma^2_g \right)\exp\left(12 \cL^2_1\eta_t^2\right) }_{\eqdef A_{t}^2} + \underbrace{\frac{8}{\color{red} \ubar{\rho}^2}  \eta_t^2\cL_1^2 \exp\left(12 \cL^2_1\eta_t^2\right)\normstar{\nabla f(x_t)}^2}_{\eqdef B_{t}^2}.
\end{eqnarray*}
Therefore,
\begin{eqnarray*}
    \circledNine &\leq&  {\color{red}\bar\rho} \Exp{\left(\ExpSub{k+1}{\norm{\sum^{k-1}_{t=0} \left(\prod^k_{j = t+1}(1-\alpha_j)\right)(1-\alpha_t)\hat S_{t+1} + (1-\alpha_k)\hat S_{k+1}}_2^2}\right)^{\nicefrac{1}{2}}}\\
    &=& {\color{red}\bar\rho} \Exp{\left(\norm{\sum^{k-1}_{t=0} \left(\prod^k_{j = t+1}(1-\alpha_j)\right)(1-\alpha_t)\hat S_{t+1}}^2_2 + (1-\alpha_k)^2A_k^2 \right)^{\nicefrac{1}{2}}} + {\color{red}\bar\rho}(1-\alpha_k) \Exp{B_k}\\
    &=& {\color{red}\bar\rho} \Exp{\ExpSub{k}{\left(\norm{\sum^{k-2}_{t=0} \left(\prod^k_{j = t+1}(1-\alpha_j)\right)(1-\alpha_t)\hat S_{t+1} + (1-\alpha_{k-1})(1-\alpha_k)\hat S_{k} }^2_2 + (1-\alpha_k)^2A_k^2 \right)^{\nicefrac{1}{2}}}} \\
    && + {\color{red}\bar\rho}(1-\alpha_k) \Exp{B_k}.
\end{eqnarray*}
By Jensen's inequality, we have 
\begin{eqnarray*}
    \circledNine &\leq&  {\color{red}\bar\rho}\Exp{\left( \ExpSub{k}{\norm{\sum^{k-2}_{t=0} \left(\prod^k_{j = t+1}(1-\alpha_j)\right)(1-\alpha_t)\hat S_{t+1} + (1-\alpha_{k-1})(1-\alpha_k)\hat S_{k} }^2_2} + (1-\alpha_k)^2A_k^2 \right)^{\nicefrac{1}{2}}}\\
    && + {\color{red}\bar\rho}(1-\alpha_k) \Exp{B_k}\\
    &=& {\color{red}\bar\rho}\Exp{\left( \norm{\sum^{k-2}_{t=0} \left(\prod^k_{j = t+1}(1-\alpha_j)\right)(1-\alpha_t)\hat S_{t+1}}^2_2 + (1-\alpha_{k-1})^2(1-\alpha_k)^2\ExpSub{k}{\norm{\hat S_{k} }^2_2} + (1-\alpha_k)^2A_k^2 \right)^{\nicefrac{1}{2}}}\\
    && + {\color{red}\bar\rho}(1-\alpha_k) \Exp{B_k}\\
    &\leq& {\color{red}\bar\rho}\Exp{\left( \norm{\sum^{k-2}_{t=0} \left(\prod^k_{j = t+1}(1-\alpha_j)\right)(1-\alpha_t)\hat S_{t+1}}^2_2 + (1-\alpha_{k-1})^2(1-\alpha_k)^2A^2_{k-1} + (1-\alpha_k)^2A_k^2 \right)^{\nicefrac{1}{2}}}\\
    && + {\color{red}\bar\rho}(1-\alpha_k) \Exp{B_k} + {\color{red}\bar\rho}(1-\alpha_{k-1})(1-\alpha_k)\Exp{B_{k-1}}.
\end{eqnarray*}
Expanding the sum under the norm as before yields
\begin{eqnarray*}
    \circledNine &\leq& {\color{red}\bar\rho}\left( \sum^{k}_{t=0} \left(\prod^k_{j = t}(1-\alpha_j)^2\right)A^2_{t} \right)^{\nicefrac{1}{2}} + \sum^{k}_{t=0} \left(\prod^k_{j = t}(1-\alpha_j)\right)\Exp{B_{t}}\\
    &\leq& \left(2\cL_0 +4\cL_1  {\color{red}\bar\rho}\sigma_g \right)\frac{{\color{red}\bar\rho}}{\color{red} \ubar{\rho}}\left( \sum^{k}_{t=0} \left(\prod^k_{j = t}(1-\alpha_j)^2\right)\eta^2_t\exp\left(12 \cL^2_1\eta_t^2\right) \right)^{\nicefrac{1}{2}} \\
    && + 4\frac{{\color{red}\bar\rho}}{\color{red} \ubar{\rho}}\sum^{k}_{t=0} \left(\prod^k_{j = t}(1-\alpha_j)\right)  \eta_t\cL_1 \exp\left(6 \cL^2_1\eta_t^2\right)\Exp{\normstar{\nabla f(x_t)}}.
\end{eqnarray*}

Therefore,
\begin{eqnarray*}
    \Exp{\normstar{e_{k+1} }}
    & \leq & \prod_{t=0}^k (1-\alpha_t)\Exp{\normstar{e_0}}   +   {\color{red}\bar\rho} \sigma_g \sqrt{ \sum_{t=0}^k \left(\prod_{j=t+1}^k (1-\alpha_j)^2 \right) \alpha_t^2}\\
    && + \left(2\cL_0 +4\cL_1  {\color{red}\bar\rho}\sigma_g \right)\frac{{\color{red}\bar\rho}}{\color{red} \ubar{\rho}}\left( \sum^{k}_{t=0} \left(\prod^k_{j = t}(1-\alpha_j)^2\right)\eta^2_t\exp\left(12 \cL^2_1\eta_t^2\right) \right)^{\nicefrac{1}{2}} \\
    && + 4\frac{{\color{red}\bar\rho}}{\color{red} \ubar{\rho}}\sum^{k}_{t=0} \left(\prod^k_{j = t}(1-\alpha_j)\right)  \eta_t\cL_1 \exp\left(6 \cL^2_1\eta_t^2\right)\Exp{\normstar{\nabla f(x_t)}}.
\end{eqnarray*}

If $\eta_k = \eta$ and $\alpha_k = \alpha$, then 
\begin{eqnarray*}
    \Exp{\normstar{e_{k+1} }}
    & \leq & (1-\alpha)^{k+1}\Exp{\normstar{e_0}}   +   {\color{red}\bar\rho} \sigma_g \sqrt{ \sum_{t=0}^k  (1-\alpha)^{2(k-t)}\alpha^2}\\
    && + 2\frac{{\color{red}\bar\rho}}{\color{red} \ubar{\rho}}\left(\cL_0 +2\cL_1  {\color{red}\bar\rho}\sigma_g \right)\eta\exp\left(6 \cL^2_1\eta^2\right)\left( \sum^{k}_{t=0} (1-\alpha)^{2(k-t+1)} \right)^{\nicefrac{1}{2}} \\
    && + 4\frac{{\color{red}\bar\rho}}{\color{red} \ubar{\rho}}\cL_1 \exp\left(6 \cL^2_1\eta^2\right)\sum^{k}_{t=0} (1-\alpha)^{k-t+1}\eta\Exp{\normstar{\nabla f(x_t)}}.
\end{eqnarray*}

Next, since 
\begin{eqnarray*}
    \sum_{t=0}^{k-1} (1-\alpha)^{2(k-t+1)}
    & \leq & \sum_{j=0}^\infty (1-\alpha)^{2j}  =  \frac{1}{1-(1-\alpha)^2} \leq \frac{1}{\alpha}, \\
    &\text{and}&\\
     \sum_{t=0}^{k-1} (1-\alpha)^{2(k-t)} \alpha^2  & \leq &  \alpha^2 \sum_{j=0}^\infty ((1-\alpha)^2)^j  = \frac{\alpha^2}{1-(1-\alpha)^2} = \frac{\alpha}{2-\alpha} 
     \overset{\alpha \in [0,1]}{\leq} \alpha,
\end{eqnarray*}
we obtain
\begin{eqnarray*}
    \Exp{\normstar{e_{k+1} }}
    & \leq & (1-\alpha)^{k+1}\Exp{\normstar{e_0}}   +   {\color{red}\bar\rho} \sigma_g \sqrt{\alpha} + 2\frac{{\color{red}\bar\rho}}{\color{red} \ubar{\rho}}\left(\cL_0 +2\cL_1  {\color{red}\bar\rho}\sigma_g \right)\exp\left(6 \cL^2_1\eta^2\right) \frac{\eta}{\sqrt{\alpha}} \\
    && + 4\frac{{\color{red}\bar\rho}}{\color{red} \ubar{\rho}}\cL_1 \exp\left(6 \cL^2_1\eta^2\right)\sum^{k}_{t=0} (1-\alpha)^{k-t+1}\eta\Exp{\normstar{\nabla f(x_t)}}.
\end{eqnarray*}

Therefore, recalling $\varphi =\eta( 1 - \exp(L_1 \eta) 
 \nicefrac{L_1\eta}{2} ) $, we have 
\begin{eqnarray*}   
    \sum_{k=0}^K  \eta \varphi \Exp{\normstar{\nabla f(x_k)}}   &\leq&  \Delta + \frac{L_0}{2} \sum_{k=0}^K \exp(L_1 \eta) \eta^2 + 2  \eta \sum_{k=0}^K \Exp{\normstar{e_k}}   \\
    &\leq& \Delta  + \frac{L_0}{2}  \exp(L_1 \eta) \eta^2(K+1)  + 2  \eta \sum_{k=0}^K (1-\alpha)^{k} \Exp{\normstar{e_0}}\\
    &&+  4\eta\frac{{\color{red}\bar\rho}}{\color{red} \ubar{\rho}}\left(\cL_0 +2\cL_1  {\color{red}\bar\rho}\sigma_g \right)\exp\left(6 \cL^2_1\eta^2\right) \frac{\eta}{\sqrt{\alpha}} (K+1) \\
    && + 8\eta \frac{{\color{red}\bar\rho}}{\color{red} \ubar{\rho}}\cL_1 \exp\left(6 \cL^2_1\eta^2\right) \sum_{k=0}^K \sum_{t=0}^{k-1} (1-\alpha)^{k-t+1}  \eta \normstar{\nabla f(x_{t+1})} \\
    && + 2\eta{\color{red}\bar\rho} \sqrt{\alpha}\sigma_g(K+1),
\end{eqnarray*}
where $\Delta = \Exp{f(x_0)-f_{\inf}}$.

Next, since 
\begin{eqnarray*}
    \sum_{k=0}^K (1-\alpha)^k  \leq \frac{1}{\alpha}, &\text{and} & \sum_{k=0}^K \sum_{t=0}^{k-1} (1-\alpha)^{k-t+1} \normstar{\nabla f(x_{t+1})}
    \leq \frac{1}{\alpha} \sum_{k=0}^K \normstar{\nabla f(x_k)},
\end{eqnarray*}
we obtain
\begin{eqnarray*}
    \sum_{k=0}^K \eta\vartheta  \Exp{\normstar{\nabla f(x_k)}}
    & \leq& \Delta  + \frac{L_0\eta^2}{2}  \exp(L_1 \eta)  (K+1) + 2  \frac{\eta}{\alpha} \Exp{ \normstar{e_0}}\\
    && + 4\eta\frac{{\color{red}\bar\rho}}{\color{red} \ubar{\rho}}\left(\cL_0 + 2\cL_1  {\color{red}\bar\rho}\sigma_g \right)\exp\left(6 \cL^2_1\eta^2\right) \frac{\eta}{\sqrt{\alpha}} (K+1) \\
    && +  2 {\color{red}\bar\rho} \sqrt{\alpha}\sigma_g(K+1),
\end{eqnarray*}
where $\vartheta \eqdef \left( 1 - \exp(L_1 \eta) 
 \frac{L_1\eta}{2}   -  8 \frac{{\color{red}\bar\rho}}{\color{red} \ubar{\rho}}\cL_1 \exp\left(6 \cL^2_1\eta^2\right)\frac{\eta}{\alpha}  \right)$.

 If $\eta \leq \min\left\{ \frac{1}{3L_1} , \frac{1}{33\cL_1}\left(\frac{{\color{red}\bar\rho}}{\color{red} \ubar{\rho}}\right)^{-1}  \right\} \alpha$, then $\eta \leq \min\left\{ \frac{1}{3L_1} , \frac{1}{33\cL_1}\left(\frac{{\color{red}\bar\rho}}{\color{red} \ubar{\rho}}\right)^{-1}  \right\}$ and 
\begin{eqnarray*}
    \frac{\eta}{2}\sum_{k=0}^K   \Exp{\normstar{\nabla f(x_k)}}   
    &\leq& \Delta  + (K+1)\frac{L_0}{2}  \exp(L_1 \eta) \eta^2  + 2  \frac{\eta}{\alpha}  \Exp{\normstar{e_0}} + 2\eta   {\color{red}\bar\rho} \sqrt{\alpha}\sigma_g(K+1)\\
    &&+ 4\eta\frac{{\color{red}\bar\rho}}{\color{red} \ubar{\rho}}\left(\cL_0 + 2\cL_1  {\color{red}\bar\rho}\sigma_g \right)\exp\left(6 \cL^2_1\eta^2\right) \frac{\eta}{\sqrt{\alpha}} (K+1).
\end{eqnarray*}

Therefore, 
\begin{eqnarray*}
    \frac{1}{K+1}\sum_{k=0}^K   \Exp{\normstar{\nabla f(x_k)}} 
    & \leq & \frac{2\Delta}{\eta(K+1)}  +L_0 \exp(L_1 \eta) \eta  +  \frac{4\Exp{\normstar{e_0}}}{\alpha (K+1)}  + 4   {\color{red}\bar\rho} \sqrt{\alpha}\sigma_g\\
    && +  8\frac{{\color{red}\bar\rho}}{\color{red} \ubar{\rho}}\left(\cL_0 + 2\cL_1  {\color{red}\bar\rho}\sigma_g \right)\exp\left(6 \cL^2_1\eta^2\right) \frac{\eta}{\sqrt{\alpha}} .
\end{eqnarray*}

If $\eta = \frac{\hat \eta}{(K+1)^{\nicefrac{2}{3}}} $ with $\hat \eta = \min\left\{ \frac{1}{3L_1} , \frac{1}{33\cL_1}\left(\frac{{\color{red}\bar\rho}}{\color{red} \ubar{\rho}}\right)^{-1}  \right\} $ and $\alpha = \frac{1}{(K+1)^{\nicefrac{2}{3}}}$, then 
\begin{eqnarray*}
    \frac{1}{K+1}\sum_{k=0}^K   \Exp{\normstar{\nabla f(x_k)}}   
    &\leq& \frac{2\Delta}{\hat\eta(K+1)^{\nicefrac{1}{3}}}  + \frac{\hat\eta L_0 \exp(L_1 \hat \eta)}{(K+1)^{\nicefrac{2}{3}}}  + \frac{4 \Exp{\normstar{e_0}}}{ (K+1)^{\nicefrac{1}{3}}} +   4\frac{{\color{red}\bar\rho} \sigma_g}{(K+1)^{\nicefrac{1}{3}}}\\
    && +  8\frac{{\color{red}\bar\rho}}{\color{red} \ubar{\rho}}\left(\cL_0 + 2\cL_1  {\color{red}\bar\rho}\sigma_g \right)\exp\left(6 \cL^2_1\hat\eta^2\right) \frac{\hat\eta}{(K+1)^{\nicefrac{1}{3}}} .
\end{eqnarray*}

Finally, by the fact that $\min_{k \in \{0,1,\ldots,K\}} \Exp{\normstar{\nabla f(x_k)}}
\leq \frac{1}{K+1}\sum_{k=0}^K \Exp{\normstar{\nabla f(x_k)}}$, we obtain the final result.

\newpage
\section{Experiments}\label{sec:exp}
To ensure a fair and reproducible comparison, the initialization and hyperparameter schedules were standardized across all experiments.

\subsection{Initialization}
For each experiment, a single initial parameter vector, $x_0$, was generated by drawing from a normal (Gaussian) distribution, $x_0 \sim \mathcal{N}(0, 1)$. The random seed was fixed to ensure that this exact same starting point was used for every algorithm evaluated in that experiment. This "far start" initialization is designed to test the robustness and convergence capabilities of the optimizers from a non-trivial region of the parameter space.
\subsection{Stepsize and Momentum Schedules}
The learning rate $\eta_k$ and the momentum parameter $\alpha_k$ are decayed at each iteration $k$. The schedules are chosen based on the theoretical underpinnings of each class of algorithm. Let $\eta_0$ be a pre-defined initial learning rate.

\begin{itemize}
    \item \textbf{For Polyak Momentum:} This first-order method uses its standard theoretically-backed schedule:
    \begin{align*}
        \alpha_k = \frac{1}{\sqrt{k+1}}, \quad
        \eta_k = \frac{\eta_0}{(k+1)^{3/4}}.
    \end{align*}

    \item \textbf{For Extrapolated Momentum:} This method uses a distinct schedule designed for its update rule:
    \begin{align*}
        \alpha_k = \frac{1}{(k+1)^{4/7}}, \quad
        \eta_k = \frac{\eta_0}{(k+1)^{5/7}}.
    \end{align*}

    \item \textbf{For all Second-Order Momentum Variants (SOM-V1, SOM-V2, $\beta$-SOM-V1, and $\beta$-SOM-V2):} These methods share a common schedule for their primary learning rate and momentum parameters:
    \begin{align*}
        \alpha_k = \frac{1}{(k+1)^{2/3}}, \quad
        \eta_k = \frac{\eta_0}{(k+1)^{2/3}}.
    \end{align*}
\end{itemize}
These schedules ensure that the step sizes and momentum contributions diminish over time, a necessary condition for convergence in stochastic optimization.
\subsection{Nonconvex Logistic Regression}
We evaluate the performance of several stochastic optimization algorithms on a composite non-convex problem. The objective is to benchmark their convergence speed and stability on a logistic regression task augmented with a non-convex regularizer.

The model is a standard logistic regression classifier. The experiment is conducted on the \texttt{splice} dataset from the libsvm library, which contains $1000$ training samples and $60$ features.

The optimization objective is to minimize a composite function $f(x)$, defined as:
\begin{equation*}
    \min_{x \in \mathbb{R}^d} f(x) = \mathcal{L}(x) + R(x)
\end{equation*}
where $x \in \mathbb{R}^{60}$ is the vector of model parameters. The loss term $\mathcal{L}(x)$ is the standard mean logistic loss:
\begin{equation*}
    \mathcal{L}(x) = \frac{1}{N} \sum_{i=1}^{N} \log(1 + \exp(-y_i a_i^T x))
\end{equation*}
The regularization term $R(x)$ is the non-convex Welsch regularizer, defined as:
\begin{equation*}
    R(x) = \lambda \sum_{j=1}^{d} \frac{x_j^2}{1 + x_j^2}
\end{equation*}
The regularization hyperparameter $\lambda$ is set to $0.01$.

Figure~\ref{fig:main_comparison_lr} displays the convergence behavior of the three algorithms over $20000$ iterations. The training loss plot shows that the second-order methods containing Hessian information (Variant 1, Variant 2) outperform the first-order Polyak Momentum. Among these, Variant 2 achieves the fastest convergence and reaches the lowest final loss value.

The gradient norm plot corroborates these findings. While all methods exhibit stochastic oscillations, the overall trend for the second-order methods is a more rapid and consistent decrease in the gradient norm. The Polyak Momentum method converges to a region with a substantially higher gradient norm, indicating a less optimal solution.

\begin{figure}[h!]
    \centering
    \includegraphics[width=0.47\textwidth]{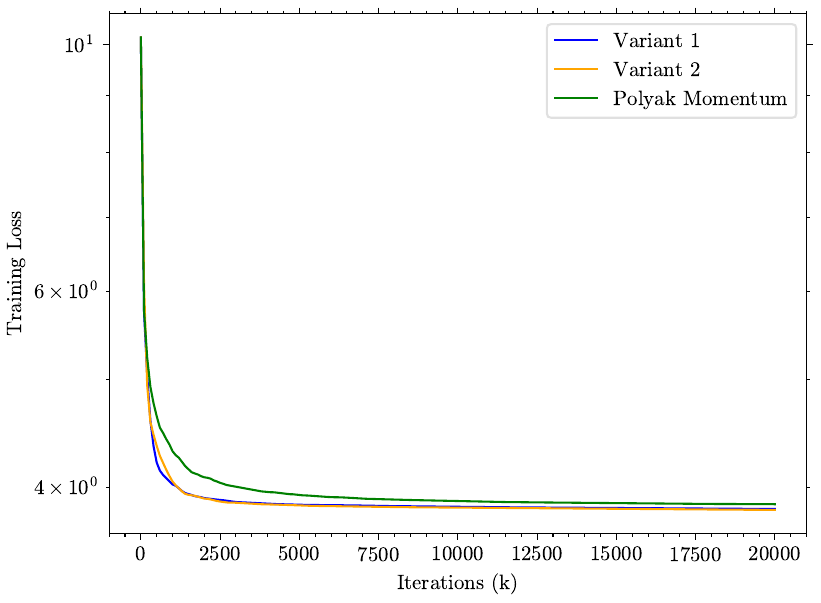}
    \includegraphics[width=0.47\textwidth]{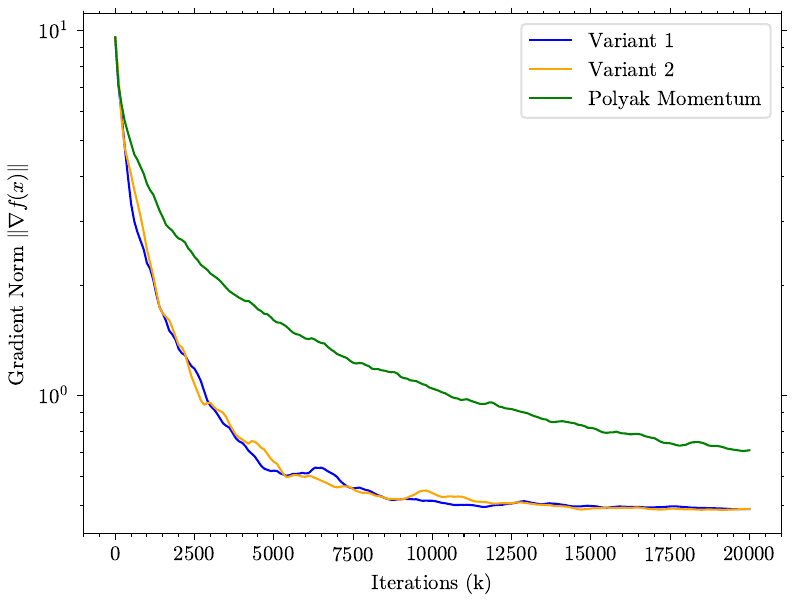}
    \caption{Main algorithm comparison for logistic regression. Training loss vs. iterations. Gradient norm vs. iterations.}
    \label{fig:main_comparison_lr}
\end{figure}
\subsection{Multi-Layer Perceptron}
We investigate the performance of several stochastic optimization algorithms on a non-convex binary classification problem. Our goal is to compare their convergence properties.

The model is a Multi-Layer Perceptron (MLP) with two hidden layers, implemented in PyTorch. The experiment is conducted on the \texttt{splice} dataset, obtained from the libsvm library. It consists of 1000 training samples, each with 60 features.

The optimization objective is to minimize a composite function $f(x)$, which includes a standard loss term and a non-convex regularizer:
\begin{equation*}
    \min_{x \in \mathbb{R}^d} f(x) = \mathcal{L}(x) + R(x)
\end{equation*}
where $x$ represents the flattened vector of all model parameters.

The loss term, $\mathcal{L}(x)$, is the mean Binary Cross-Entropy with Logits loss, calculated over the entire training dataset.

The regularization term, $R(x)$, is the non-convex Welsch regularizer, chosen to create a more challenging optimization landscape. It is defined as:
\begin{equation*}
    R(x) = \lambda \sum_{i=1}^{d} \frac{x_i^2}{1 + x_i^2}
\end{equation*}
where $\lambda$ is the regularization hyperparameter, set to $0.01$.

We set the theoretical learning rates for the algorithms considered.

The results are presented in two separate comparisons. Figure~\ref{fig:momentum_comparison} compares four momentum-based algorithms by plotting their running minimum performance. Among these, SOM-V2 achieves the lowest final training loss. Extrapolated Momentum and SOM-V1 perform similarly, while Polyak Momentum converges to a slightly higher loss. The running minimum plot of the gradient norm clarifies the convergence trend for each method by smoothing out stochastic noise.

\begin{figure}[h!]
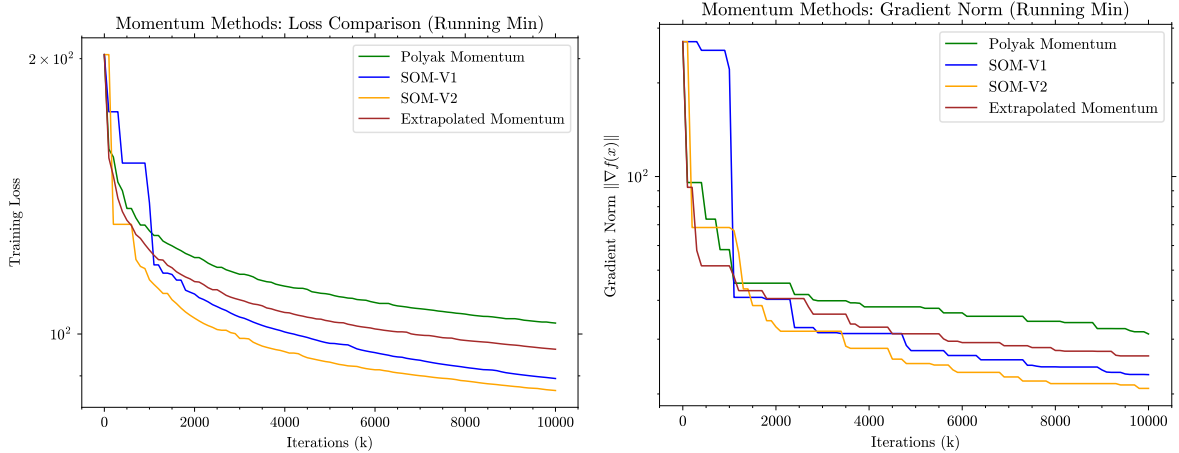

    \centering
    \includegraphics[width=0.47\textwidth]{plots/mlp_momentum_running_min_loss.pdf}
    \includegraphics[width=0.47\textwidth]{plots/mlp_momentum_running_min_grad.pdf}
    \caption{Comparison of momentum-based methods showing the running minimum (best value found so far). Left: Training loss vs. iterations. Right: Gradient norm vs. iterations.}
    \label{fig:momentum_comparison}
\end{figure}

Figure~\ref{fig:hvp_comparison} compares the standard second-order momentum methods (SOM-V1 and SOM-V2) with their $\beta$-HVP counterparts. The results clearly show the benefit of the additional Hessian term, as both $\beta$-SOM-V1 and $\beta$-SOM-V2 outperform their respective base variants. $\beta$-SOM-V2 demonstrates the strongest overall performance, converging to the lowest training loss of all tested algorithms. The running minimum gradient norm plot also makes it clear that the $\beta$-SOM methods, particularly $\beta$-SOM-V2, offer a more consistent decrease in gradient magnitude compared to the standard SOM-V1 and SOM-V2 methods.

\begin{figure}[h!]
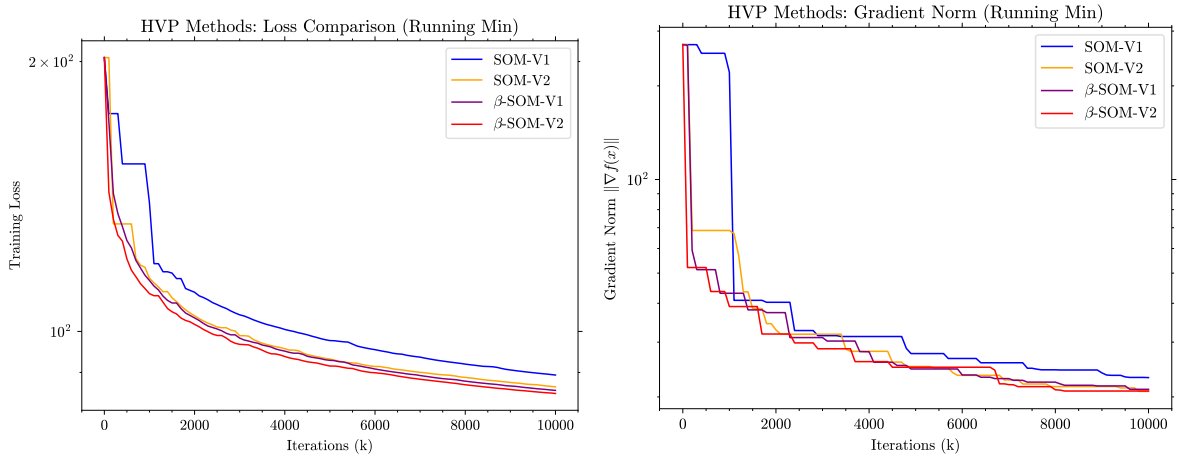

    \centering
    \includegraphics[width=0.47\textwidth]{plots/mlp_hvp_running_min_loss.pdf}
    \includegraphics[width=0.47\textwidth]{plots/mlp_hvp_running_min_grad.pdf}
    \caption{Comparison of HVP-based methods showing the running minimum (best value found so far). Left: Training loss vs. iterations. Right: Gradient norm vs. iterations.}
    \label{fig:hvp_comparison}
\end{figure}

\subsection{Recurrent Neural Network Training}
The model is a two-layer LSTM network designed for word-level language modeling. The architecture consists of an embedding layer ($200$ dimensions), a two-layer LSTM core with hidden units of size $200,$ and a final linear decoder layer to produce logits over the vocabulary.

The experiments are conducted on the Penn Treebank (PTB) dataset, a standard and widely-used benchmark for evaluating language models.

Key statistics and preprocessing steps are as follows:
\begin{itemize}
    \item \textbf{Corpus Size:} The dataset is split into training, validation, and testing sets. The training portion, used in our experiment, contains approximately $929,000$ tokens.
    \item \textbf{Vocabulary:} A dictionary is constructed from the unique words present in the training data. An end-of-sentence token, \texttt{<eos>}, is added to each sentence, resulting in a total vocabulary size of $10,000$ unique tokens.
    \item \textbf{Tokenization:} The raw text is tokenized by splitting on whitespace, and each word is converted into its corresponding integer index from the vocabulary.
    \item \textbf{Batching:} For training, the entire sequence of token IDs is reshaped into a fixed number of parallel streams (a batch size of $20$ in our case). The model is then trained on sequential chunks of this data using Truncated Backpropagation Through Time (BPTT) with a sequence length of $35.$
\end{itemize}

The objective is to minimize the standard Cross-Entropy Loss. The parameter update follows the LMO-based rule:
\begin{equation*}
    x_{k+1} = x_k + \text{lmo}(m_k)
\end{equation*}
where the Linear Minimization Oracle, $\text{lmo}(m_k)$, is defined as:
\begin{equation*}
    \text{lmo}(m_k) := \underset{\norm{v} \le \eta_k}{\argmin} \langle m_k, v \rangle.
\end{equation*}

We compare six algorithms: Polyak Momentum, SOM-V1, SOM-V2, our proposed $\beta$-SOM-V1 and $\beta$-SOM-V2 variants, and Extrapolated Momentum. The hyperparameter schedules for $\alpha_k$ and $\eta_k$ are set to their theoretical values for each respective algorithm.
\subsection{The case when $\norm{\cdot}$ is a Euclidean norm}
For this experiment, we use the Euclidean ($\ell_2$) norm. The results of the LMO-based training, presented in Figure~\ref{fig:lstm_lmo_loss}, reveal a significant divergence in performance between the first-order and more advanced momentum methods.
\begin{figure}[h!]
    \centering
    \includegraphics[width=0.47\textwidth]{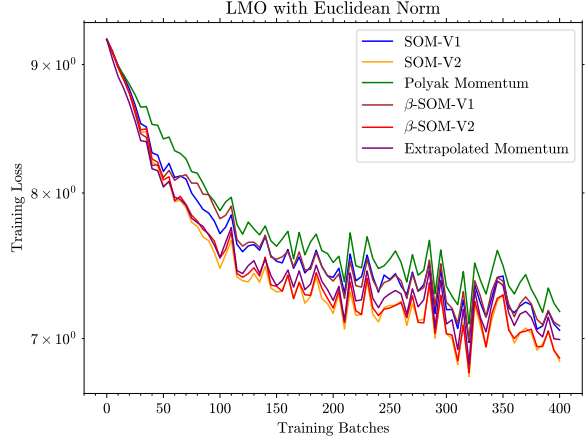}
    \caption{Training loss vs. training batches for the LSTM model using an LMO update with the Euclidean norm. The plot highlights the stronger performance of the other five methods over Polyak Momentum.}
    \label{fig:lstm_lmo_loss}
\end{figure}
The simple momentum update rule is less efficient at navigating the loss landscape for this task. In contrast, the other five algorithms, which all incorporate either second-order (HVP) or extrapolated gradient information, form a tight cluster of high-performing methods. They converge faster and to a lower loss value than Polyak Momentum. Within this cluster, SOM-V2 and $\beta$-SOM-V2 often achieve a marginally lower loss, suggesting that calculating the curvature information at the current point $x_k$ (rather than the interpolated point used by SOM-V1 and $\beta$-SOM-V1) may offer a slight advantage. The strong performance of these five methods indicates that the LMO update rule is a highly effective stabilization technique when paired with momentum strategies that utilize more sophisticated directional information.
\subsection{The case when $\norm{\cdot}$ is $\ell_{\infty}$-norm}
To further investigate the effect of the update geometry, we conducted a parallel experiment where the LMO is constrained by the Infinity norm ($\ell_\infty$). The resulting training loss curves are presented in Figure~\ref{fig:lstm_linf_loss}.
\begin{figure}[h!]
    \centering
    \includegraphics[width=0.47\textwidth]{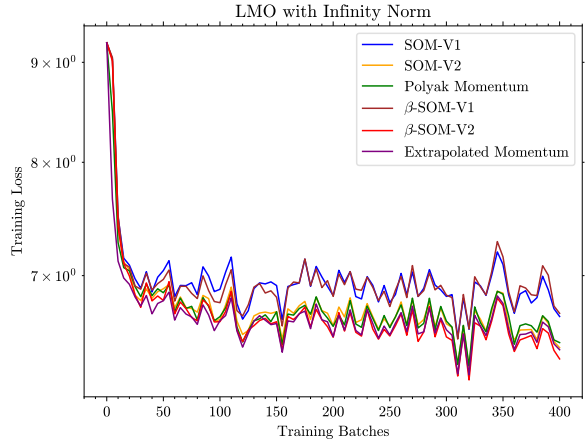}
    \caption{Training loss vs. training batches for the LSTM model using an LMO update with the Infinity norm. The optimization process is successful but visibly less stable than with the Euclidean norm.}
    \label{fig:lstm_linf_loss}
\end{figure}
The most immediate observation is that while all algorithms still successfully guide the training process toward a lower loss, the convergence path is significantly less stable. The loss curves for all methods are characterized by high-frequency, large-amplitude oscillations.

This instability is an expected consequence of the $\ell_\infty$ LMO. Unlike the smooth, normalized direction vector produced by the $\ell_2$ norm, the $\ell_\infty$ LMO generates a more aggressive update where every component of the step vector is pushed to its maximum value. This results in a more chaotic exploration of the parameter space, leading to the observed volatility in the training loss.

An important consequence of this instability is that the clear performance hierarchy observed in the Euclidean experiment has vanished. The loss curves for all six algorithms are tightly intertwined, and no single method demonstrates a consistent advantage. This suggests that the high variance and non-smooth nature of the $\ell_\infty$ update step dominate the more subtle directional corrections offered by the second-order and extrapolated momentum terms.

While the final loss values appear comparable to those achieved with the $\ell_2$ norm, the convergence path is significantly less stable, making the Euclidean norm the more reliable and predictable choice for this particular language modeling task.

\section*{LLM Use Acknowledgment}

In this paper, we used large language models (LLMs) to assist with grammar and wording during the preparation of the manuscript. We did not use LLMs to derive convergence theorems, generate empirical plots, or search for citations. This usage is in accordance with two primary LLM-related policies.

\end{document}